\documentclass[a4paper, twoside, 11pt]{amsart}

\usepackage[utf8]{inputenc}
\usepackage[T1]{fontenc}
\usepackage[english]{babel}
\usepackage{lmodern}
\usepackage{amsmath,amssymb,amsthm}
\usepackage{geometry}
\usepackage{hyperref}
\usepackage{comment}

\usepackage{float}
\usepackage{tikz}
\usetikzlibrary{calc}

\hypersetup{
    colorlinks=true,
    linkcolor=blue,
    citecolor=blue,
    urlcolor=blue,
   pdftitle={CR Orbits for the Split Real Form of G2},
pdfauthor={Donato Mancini, Stefano Marini, Costantino Medori}
}

\newtheorem{theorem}{Theorem}[section]
\newtheorem{proposition}[theorem]{Proposition}
\newtheorem{lemma}[theorem]{Lemma}

\theoremstyle{definition}
\newtheorem{definition}[theorem]{Definition}

\theoremstyle{remark}
\newtheorem{remark}[theorem]{Remark}

\usepackage{tikz}
\usetikzlibrary{calc}

\providecommand{\GTwoCrossedDiagram}[4]{%
\begin{tikzpicture}[scale=0.9,baseline={(0,-0.05)}]
    \coordinate (A) at (0,0);
    \coordinate (B) at (2.1,0);

    \draw (A) -- (B);
    \draw ($(A)+(0,0.10)$) -- ($(B)+(0,0.10)$);
    \draw ($(A)+(0,-0.10)$) -- ($(B)+(0,-0.10)$);

    \node at (1.05,0) {$\langle$};

    \ifnum#1=1
        \filldraw[black] (A) circle (2.2pt);
    \else
        \draw[fill=white] (A) circle (2.2pt);
    \fi

    \ifnum#2=1
        \filldraw[black] (B) circle (2.2pt);
    \else
        \draw[fill=white] (B) circle (2.2pt);
    \fi

    \node[above=5pt] at (A) {$\alpha_1$};
    \node[above=5pt] at (B) {$\alpha_2$};

    \ifnum#3=1
        \node[below=6pt] at (A) {$\times$};
    \fi

    \ifnum#4=1
        \node[below=6pt] at (B) {$\times$};
    \fi
\end{tikzpicture}%
}
\title{CR Orbits on the complex flag manifolds of $G_2$}

\author{Donato Mancini}
\address{Donato Mancini, Department of Mathematical, Physical, and Computer Sciences, University of Parma, Parco Area delle Scienze 53/A (Campus), 43124 Parma, Italy }
\email{donato.mancini@unipr.it}

\author{Stefano Marini}
\address{Stefano Marini, Department of Mathematical, Physical, and Computer Sciences, University of Parma, Parco Area delle Scienze 53/A (Campus), 43124 Parma, Italy }
\email{stefano.marini@unipr.it}

\author{Costantino Medori}
\address{Costantino Medori, Department of Mathematical, Physical, and Computer Sciences, University of Parma, Parco Area delle Scienze 53/A (Campus), 43124 Parma, Italy }
\email{costantino.medori@unipr.it}

\makeatletter
\@namedef{subjclassname@2020}{\textup{2020} Mathematics Subject Classification}

\makeatother
\subjclass[2020]{32V05, 32V40, 32V35, 32V30, 53C30.
}
\keywords{Exceptional Lie algebra,   $G_2$,  CR geometry, Levi form, $k$-nondegeneracy, CR structures, flag manifolds}

\date{\today}
\begin{document}
\maketitle
\thispagestyle{plain}
\begin{abstract}
We give an explicit description of the orbits of the split real form
$G_{2(2)}$ on the complex flag manifolds of $G_2$ and study their
induced invariant CR structures. In particular we study their Levi forms and
show that all possible cases in the simple homogeneous setting
occur, namely  Levi nondegeneracy, $k$-nondegeneracy for $2\leq k\leq 3$,
and holomorphic degeneracy. Moreover, following the classical approach to orbits
on flag manifolds, we give equations for these homogeneous CR
manifolds.
\end{abstract}

\tableofcontents
\section*{Introduction}
The exceptional Lie group $G_2$, the smallest of the exceptional
simple Lie groups, occupies a distinguished place in mathematics
and mathematical physics. Its connections with octonion algebras,
exceptional holonomy, and special geometric structures make it
a natural meeting point of algebra and differential geometry, see \cite{Agricola2008} for an overview. The complex flag manifolds of $G_2$ provide a natural geometric
setting for studying its representations and the actions of its
real forms, see for instance \cite[Introduction, item~(4)]{HillMerkerNieNurowski}. In particular, the two maximal flag manifolds admit
concrete projective realizations as a quadric of isotropic lines
and the adjoint variety parametrizing special isotropic
two-dimensional subspaces, see
\cite[Section~6.1, Propositions~6.2--6.3 and Remark~6.4]{LandsbergManivel2003}.
Here we study the orbits of the split real form $G_{2(2)}$
on these spaces and on the complete flag manifold, together
with the Cauchy–Riemann structures induced by their
generic embeddings into the respective flag manifolds (see \cite{MariniMedoriNacinovich2020,MariniMedoriNacinovich2023Closed}
for an account of the CR structures induced on real orbits by their
embeddings into complex flag manifolds). 
\par
The Levi form is a fundamental invariant of a CR structure,
and its degeneracy plays a central role in the study of
local CR automorphisms and their determination by finite jets
\cite{KolarMeylan2019}.
When the Levi form is
degenerate, determining whether the structure satisfies a weaker notion of nondegeneracy, namely $k$-nondegenerate (see 
\cite[Definition~11.1.1]{BaouendiEbenfeltRothschild1999}),
and computing its precise order provides a natural
higher-order refinement of this invariant.
These questions have acquired particular importance in the contemporary
study of Levi-degenerate CR manifolds, especially in connection with
the classification of homogeneous models and the local equivalence
problem
\cite{FelsKaup2008,Gregorovic2021,IsaevZaitsev2013,KaupZaitsev2006,KruglikovSanti2025,MedoriSpiro2014,SykesZelenko2023}, e.g. the effectiveness of these
higher-order invariants is provided by their use in constructing
normal forms and explicit defining equations for $2$-nondegenerate
model CR hypersurfaces
\cite{GregorovicKolarSykes2025}. For all these reasons, we think that   the real orbits for the split real form
$G_{2(2)}$ are an interesting class of examples. In fact, it was proved in
\cite[Theorem~2.11]{MariniMedoriNacinovich2020}
that a $k$-nondegenerate real orbit in a complex flag manifold
$G/Q$ with $G$ simple satisfies $k\leq 3$.
In this paper, we show that all possible cases occur among
the $G_{2(2)}$-orbits, namely  Levi
nondegeneracy, $k$-nondegeneracy for $2\leq k\leq 3$,
and holomorphic degeneracy.

\par
The paper is structured as follows. In Section~1,  we introduce the necessary
preliminaries by  recalling the structure and root system of the complex
Lie algebra $\mathfrak g_{2}$ and its split real form
$\mathfrak g_{2(2)}$, describing the $G_2$  flag manifolds and
their projective realizations, and reviewing the CR-algebras tools used
to study the induced $CR$ structures on the split real-form orbits.
In Section~2, we determine the
$G_{2(2)}$-orbits on the complete flag manifold $G_{2}/B$. After
recalling the general orbit-counting procedure in \cite{Aomoto1966} ( see also \cite{RichardsonSpringer1990}), we carry out the
explicit calculations for $G_{2}$ (Lemma~\ref{thm:orbit-count-complete-flag}), derive defining equations   for the resulting real orbits (see Table~\ref{tab:example}), and study their
projections on the two maximal flag manifolds (Lemma~\ref{thm:orbit-count-maximal-flags}). We also derive their dimensions using SageMath.
In Section~3 we give intrinsic equations for the five $G_{2(2)}$-orbits on each maximal flag manifold $F_{\alpha_i}=G_2/Q_{\alpha_i}$, $i=1,2$, see
Tables~\ref{tab:Xalpha1-orbit-summary} and~\ref{tab:Xalpha2-orbit-summary}. Local equations for the non-open, non-totally-real orbits are given in Theorems~\ref{thm:G2-alpha1-explicit-local-models} and~\ref{thm:local-models-X-alpha2}. Finally, in Section~4, we determine all possible CR types and orders of $k$-nondegeneracy that can be associated with all the orbits, see
Tables~\ref{tab:complete-flag-coarse-types-count}, \ref{tab:Xalpha-CR-summary}, and~\ref{tab:Xbeta-CR-summary}. These tables record the CR type, the order of nondegeneracy and fundamentality of each orbit; the CR structure of an orbit is the one induced by its CR algebra $(\mathfrak g_0,\mathfrak q_x)$, described in the computations of Section~4. 
\section{Preliminaries}
\subsection{The Lie algebra $\mathfrak{g}_{2}$}\label{sec: all on g2}
In this section, we will recall the concrete matrix realization of the Lie algebra of type $\mathfrak g_2$ and of its split real form as shown in \cite{DRAPERFONTANALS201823}. Furthermore, we will  compute the generators of the compact real form of $\mathfrak{g}_{2}$ and derive from it an expression for the Cartan involution on the split real form. 
Let $\mathbb F\in\{\mathbb R,\mathbb C\}$.
For \(x=(x_1,x_2,x_3)^t\in \mathbb F^3\), let $l_x$
denote the matrix
\[
l_x=
\begin{pmatrix}
0 & -x_3 & x_2\\
x_3 & 0 & -x_1\\
-x_2 & x_1 & 0
\end{pmatrix}.
\]
We then consider the vector space
\[
\hat{\mathfrak g}_{\mathbb F}=
\left\{
M(a,x,y):=
\begin{pmatrix}
0 & -2y^t & -2x^t\\
x & a & l_y\\
y & l_x & -a^t
\end{pmatrix}
\;:\;
a\in \mathfrak{sl}_3(\mathbb F),\ x,y\in \mathbb F^3
\right\}
\subset \mathfrak{gl}_7(\mathbb F).
\]
A direct verification shows that \(\hat{\mathfrak g}_{\mathbb F}\) is a $14$-dimensional simple Lie subalgebra
of \(\mathfrak{gl}_7(\mathbb F)\); hence, for \(\mathbb F=\mathbb C\), it is  a realization of the complex simple Lie algebra \(\mathfrak g_2\), while $\hat{\mathfrak{g}}_{\mathbb R}$ is its split real form. In the sequel, we will denote $\hat{\mathfrak{g}}_{\mathbb C}$ simply by $\mathfrak{g}_{2}$ and the symbol $G_{2}$ will indicate the corresponding simply connected complex Lie group, while its real form will be denoted by $\mathfrak{g}_{2(2)}$ with corresponding connected Lie group $G_{2(2)}\subset G_{2}.$ 
A standard Cartan subalgebra of $\hat{\mathfrak{g}}_{\mathbb{F}}$ is given by 
\begin{equation}\label{eq: Cartan subalgebra}
   \mathfrak{h}= \left\{
H(s_1,s_2,s_3)=[0,s_1,s_2,s_3,-s_1,-s_2,-s_3]
\mid s_1,s_2,s_3\in\mathbb C,\; s_1+s_2+s_3=0
\right\},
\end{equation}
where square brackets denote the diagonal matrix with
the indicated entries.
Define two linear forms \(\alpha_1,\alpha_2\in \mathfrak h^*\) by $\alpha_1\bigl(H(s_1,s_2,s_3)\bigr)=s_2,
$ and $
\alpha_2\bigl(H(s_1,s_2,s_3)\bigr)=s_1-s_2.$

These are the short and long simple roots, respectively. The corresponding set of
positive roots is $\Delta_+
=
\{\alpha_1,\alpha_2,\alpha_1+\alpha_2,2\alpha_1+\alpha_2,3\alpha_1+\alpha_2,3\alpha_1+2\alpha_2\},$
and the full root system is $\Delta
=
\pm\{\alpha_1,\alpha_2,\alpha_1+\alpha_2,2\alpha_1+\alpha_2,3\alpha_1+\alpha_2,3\alpha_1+2\alpha_2\}.$ The corresponding generalized eigenvectors are $X_{\epsilon_i-\epsilon_j}=M(E_{ij},0,0)$, $X_{\epsilon_i}=M(0,e_i,0)$, and $X_{-\epsilon_i}=M(0,0,-e_i)$.
Following the approach of \cite{KnappLieGroups}, let us now find the generators of the compact real form of $\mathfrak{g}_{2}$.
Consider the following Lie subalgebra of $\mathfrak{g}_{2}$
\[\mathfrak{g}_{u}=\sum_{\alpha\in\Delta}\mathbb{R}(iH_{\alpha})+\sum_{\alpha\in\Delta}\mathbb{R}(U_{\alpha}-U_{-\alpha})+i\sum_{\alpha\in\Delta}\mathbb{R}(U_{\alpha}+U_{-\alpha}),\]
where for each $\alpha\in\Delta$, $U_{\alpha}\in\mathfrak{g}_{\alpha}$, $H_{\alpha}\in\mathfrak{h}$ such that the following properties hold:
\begin{itemize}
    \item [$(1)$]$[U_{\alpha},U_{-\alpha}]=H_{\alpha};$
    \item[$(2)$] $[U_{\alpha},U_{\beta}]=N_{\alpha,\beta}U_{\alpha+\beta}$\, \text{if $\alpha+\beta\in\Delta$};
    \item[$(3)$] $[U_{\alpha},U_{\beta}]=0\, \text{if $\alpha+\beta\notin\Delta$ and $\beta\neq-\alpha$}$ 
    \item[$(4)$] $N_{-\alpha,-\beta}=-N_{\alpha,\beta};$ 
\end{itemize}
Let $k$ denote the Killing form of the Lie algebra $\mathfrak{g}_{2(2)}$. It can be  proven that $k(X,Y)=4\operatorname{Tr}(XY).$
Let us now compute the vectors $X_{\alpha}, H_{\alpha}$.
We will examine the following two cases:
\begin{itemize}
    \item [$\boxed{\alpha=\epsilon_{i}}$]
    Consider $U_{\alpha}=\frac{1}{\sqrt{24}}M(0,e_{i},0)$ and $
        U_{-\alpha}=\frac{1}{\sqrt{24}}M(0,0,-e_{i}).$ Then, it is easy to check that $k(U_{\alpha},U_{-\alpha})=1$. As a consequence, we obtain that $[U_{\alpha},U_{-\alpha}]=H_{\alpha}$,
    where 
    \begin{alignat*}{3}
    H_{\epsilon_{1}}&=\frac{1}{24}[0,2,-1,-1,-2,1,1],\qquad&&
    H_{\epsilon_{2}}=\frac{1}{24}[0,-1,2,-1,1,-2,-1],\\
    H_{\epsilon_{3}}&=\frac{1}{24}[0,-1,-1,2,1,1,-2].
    \end{alignat*}
    \item[$\boxed{\alpha=\epsilon_{i}-\epsilon_{j}}$]
    Consider $U_{\alpha}=\frac{1}{\sqrt{8}}M(E_{ij},0,0)$ and $
    U_{-\alpha}=\frac{1}{\sqrt{8}}M(E_{ji},0,0).$
 Just as in the previous case, $[U_{\alpha},U_{-\alpha}]=H_{\alpha}$,
    where
    \begin{align*}
    &H_{\epsilon_{1}-\epsilon_{2}}=\scriptsize{\frac{1}{{8}}\begin{pmatrix}
        0&0&0\\
        0&\begin{pmatrix}
            1&0&0\\
            0&-1&0\\
            0&0&0
        \end{pmatrix}&0\\
        0&0&\begin{pmatrix}
            -1&0&0\\
            0&1&0\\
            0&0&0
        \end{pmatrix}
    \end{pmatrix};}\\ 
    &H_{\epsilon_{2}-\epsilon_{3}}=\scriptsize{\frac{1}{{8}}\begin{pmatrix}
        0&0&0\\
        0&\begin{pmatrix}
            0&0&0\\
            0&1&0\\
            0&0&-1
        \end{pmatrix}&0\\
        0&0&\begin{pmatrix}
            0&0&0\\
            0&-1&0\\
            0&0&1
        \end{pmatrix}
    \end{pmatrix};}\\
    &H_{\epsilon_{1}-\epsilon_{3}}=\scriptsize{\frac{1}{{8}}\begin{pmatrix}
        0&0&0\\
        0&\begin{pmatrix}
            1&0&0\\
            0&0&0\\
            0&0&-1
        \end{pmatrix}&0\\
        0&0&\begin{pmatrix}
            -1&0&0\\
            0&0&0\\
            0&0&1
        \end{pmatrix}
    \end{pmatrix}.}
    \end{align*}
    
\end{itemize}
It is an easy but lengthy computation to verify that the chosen vectors $U_{\alpha},U_{-\alpha},H_{\alpha}$ satisfy all the properties $(1)-(4)$.
Let $\mathfrak g_u$ be a compact real form of $\mathfrak g_2$
compatible with the above matrix realization. The corresponding
conjugation will be denoted by $\tilde\theta$.
Using the concrete expression of $\mathfrak{g}_{u}$, one may easily show that
\[\tilde{\tau}\begin{pmatrix}
    0&-2z_{2}^{t}&-2z_{1}^{t}\\
    z_{1}&Z&l_{z_{2}}\\
    z_{2}&l_{z_{1}}&-Z^{t}
\end{pmatrix}=\begin{pmatrix}
    0&-2\bar{z}_{1}^{t}&-2\bar{z}_{2}^{t}\\
    \bar{z}_{2}&-\bar{Z}^{t}&l_{\bar{z}_{1}}\\
    \bar{z}_{1}&l_{\bar{z}_{2}}&\bar{Z}
\end{pmatrix}.\]
Besides, we remark that the split real form $\mathfrak{g}_{2(2)}$ is $\tilde{\theta}-$invariant. We thus obtain a Cartan involution   $\theta:\ \mathfrak{g}_{2(2)}\ \rightarrow\ \mathfrak{g}_{2(2)}$ acting as follows
\[\theta\begin{pmatrix}
    0&-2z_{2}^{t}&-2z_{1}^{t}\\
    z_{1}&Z&l_{z_{2}}\\
    z_{2}&l_{z_{1}}&-Z^{t}
\end{pmatrix}=\begin{pmatrix}
    0&-2z_{1}^{t}&-2z_{2}^{t}\\
    z_{2}&-Z^{t}&l_{z_{1}}\\
    z_{1}&l_{z_{2}}&Z
\end{pmatrix}.\]
Therefore, we may consider the following Cartan decomposition $\mathfrak{g}_{2(2)}=\mathfrak{k}_{0}\oplus\mathfrak{p}_{0},$
where 
\begin{align*}
    &\mathfrak{k}_{0}=\left\{\begin{pmatrix}
    0&-2x^{t}&-2x^{t}\\
    x&a&l_{x}\\
    x&l_{x}&a
\end{pmatrix}\ |\ x\in\mathbb{R}^{3},\ a\in\mathfrak{so}(3,\mathbb{R})\right\};\\
    &\mathfrak{p}_{0}=\left\{\begin{pmatrix}
    0&2x^{t}&-2x^{t}\\
    x&a&-l_{x}\\
    -x&l_{x}&-a
\end{pmatrix}\ |\ x\in\mathbb{R}^{3},\ a\in\operatorname{sym}_{3}(\mathbb{R})\cap\mathfrak{sl}_3(\mathbb{R})\right\}.\\
\end{align*}
\subsection{Flag manifolds and $G_{0}$-orbits}\label{sec:Flag manifolds and $G_{0}$-orbits }
Fix a Cartan subalgebra $\mathfrak h$ of the complex semisimple Lie
algebra $\mathfrak g$, and let $\Delta=\Delta(\mathfrak g,\mathfrak h)$ be
the corresponding root system. A Borel subalgebra $\mathfrak{b}$ of $\mathfrak{g}$ is a maximal solvable subalgebra of $\mathfrak{g}.$ It is well known that all Borel subalgebras are conjugate to $\mathfrak{b}(\Pi)=\mathfrak{h}\oplus\bigoplus_{\alpha\in\Delta_{+}}\mathfrak{g}_{\alpha}$ for a suitable basis $\Pi\subset \Delta$.
A parabolic subalgebra $\mathfrak{q}$ is a subalgebra of $\mathfrak{g}$ containing a Borel subalgebra. It is a classic result that the conjugacy classes of  parabolic subalgebras containing a fixed Borel subalgebra $\mathfrak{b}(\Pi)$ are in one-to-one correspondence with subsets
$\Pi_0\subseteq\Pi$. For each $\Pi_0\subset \Pi$, we  will denote by $\mathfrak q_{\Pi_0}$ the
corresponding standard parabolic subalgebra. We use the
standard root-theoretic conventions for these objects, see 
\cite[Ch.~VIII, \S~3.4]{Bourbaki7-9}.\\
Given a complex semisimple Lie group $G$ with Lie algebra $\mathfrak{g}$, we say that a Lie subgroup $B\subset G$ is a Borel subgroup if its Lie algebra is a Borel subalgebra. Furthermore, a Lie subgroup $P\subset G$ is a parabolic subgroup if its Lie algebra is a parabolic subalgebra. Given $\Pi_0\subset\Pi$ with standard parabolic subalgebra $\mathfrak{q}_{\Pi_0}$, we denote by $Q_{\Pi_0}$ the corresponding connected parabolic subgroup of $G$. \par

 Our objective in this paper will be to study the CR structures on the orbits of the flag manifolds of $G_{2}$ under the action of its split real form $G_{2(2)}$. In order to do this, we will need to recall a few basic facts on real orbits on flag manifolds.
Our reference throughout will be \cite{Zierau2000}.
Let $G$ be a complex connected semisimple Lie group with Lie algebra $\mathfrak{g}$ and let $G_{0}$ be a real form of $G$ with Lie algebra $\mathfrak{g}_{0}$. Let $\sigma$ denote the conjugation of $G$ with respect to $G_0$.\par 
Let $Q$ be a parabolic subgroup of $G$ with Lie algebra $\mathfrak{q}$. It is well-known that the flag manifold $F=G/Q$ can be identified with the $G$-conjugates of $\mathfrak{q}$ via the bijection sending the coset space $gQ$ to the parabolic subalgebra $\operatorname{Ad}(g)(\mathfrak{q}).$
Furthermore, $G$ acts transitively on $X$ via left multiplication. We are concerned with the number and structure of the orbits of the real form $G_{0}$ on the space $X$. For this purpose we  recall  {\cite[Theorems~2.6 and~3.3]{Wolf1969}}:

\begin{theorem}
Let $G$ be a connected complex semisimple Lie group, let $G_0$
be a connected real form of $G$, and let $Q\subset G$ be a
parabolic subgroup. Set $F=G/Q$. Then
\begin{enumerate}
\item $F$ is the disjoint union of finitely many $G_0$-orbits.
\item There is a unique closed $G_0$-orbit in $F$, and its real
dimension is strictly smaller than that of every other $G_0$-orbit.
\end{enumerate}
\end{theorem}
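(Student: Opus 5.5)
We follow Wolf's approach, reducing both statements to the structure of $\sigma$-stable Cartan subalgebras contained in the isotropy algebras. For $x\in F$, write $\mathfrak q_x$ for the corresponding parabolic subalgebra. The isotropy subalgebra of $G_0$ at $x$ is $\mathfrak g_0\cap\mathfrak q_x$, and the real tangent space of the orbit is $\mathfrak g_0/(\mathfrak g_0\cap\mathfrak q_x)$.

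\textbf{Step 1: a normal form for points.} The first step is to show that every $\mathfrak q_x$ contains a $\sigma$-stable Cartan subalgebra $\mathfrak h$ of $\mathfrak g$, i.e.\ the complexification of a Cartan subalgebra $\mathfrak h_0$ of $\mathfrak g_0$. To do this, note that $\mathfrak q_x\cap\sigma\mathfrak q_x$ is the complexification of $\mathfrak g_0\cap\mathfrak q_x$. Two parabolic subalgebras always share a Cartan subalgebra; a refinement, via the Bruhat decomposition applied to the pair $(\mathfrak q_x,\sigma\mathfrak q_x)$, produces one inside $\mathfrak q_x\cap\sigma\mathfrak q_x$ that can be chosen $\sigma$-stable. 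We will take this as the key lemma.

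\textbf{Step 2: finiteness.} Up to $G_0$-conjugacy there are finitely many Cartan subalgebras $\mathfrak h_0$ of $\mathfrak g_0$ (Kostant--Sugiura). For a fixed $\mathfrak h=\mathfrak h_0^{\mathbb C}$, the parabolics containing $\mathfrak h$ and conjugate to $\mathfrak q$ correspond to the finite set $W/W_Q$. Combining these, every orbit meets one of finitely many points, which proves (1).

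\textbf{Step 3: the closed orbit.} Choose a maximally split Cartan $\mathfrak h_0=\mathfrak t_0\oplus\mathfrak a_0$ and order the roots so that $\mathfrak a_0$ comes first. The orbit through a $\mathfrak q_x$ containing the associated minimal $\sigma$-stable parabolic $\mathfrak b$-type algebra contains a minimal parabolic $\mathfrak p_0$ of $\mathfrak g_0$, since $\mathfrak g_0\cap\mathfrak q_x$ is then a real parabolic. Such an orbit is $G_0/P$ with $P$ parabolic, and $G_0/P=K_0/(K_0\cap P)$ is compact, hence closed.

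\textbf{Step 4: uniqueness and the dimension statement.} For uniqueness, we show that any closed orbit is compact, so that $K_0$ acts transitively on it. We would then like to show it has minimal dimension, using the formula
\[
\dim_{\mathbb R} G_0x=\dim_{\mathbb C}\mathfrak g-\dim_{\mathbb C}(\mathfrak q_x\cap\sigma\mathfrak q_x).
\]
The plan is to prove that $\dim(\mathfrak q_x\cap\sigma\mathfrak q_x)$ is maximal exactly when $\mathfrak q_x$ contains a minimal $\sigma$-stable parabolic attached to a maximally split Cartan subalgebra. Using the root decomposition with respect to the $\mathfrak h$ of Step 1, this dimension equals $\dim\mathfrak h$ plus the number of roots $\gamma$ of $\mathfrak q_x$ with $\sigma\gamma$ also in $\mathfrak q_x$. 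This count is maximized precisely in the situation described, and such points form a single $G_0$-orbit because minimal parabolic subalgebras of $\mathfrak g_0$ are $G_0$-conjugate.

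The main obstacle is Step 4, namely showing that every non-closed orbit has strictly larger dimension rather than merely that the closed orbit is unique. Here the first thing I would try is Wolf's argument. If an orbit $G_0x$ is not of the minimal type, then its closure in the compact space $F$ contains a strictly lower-dimensional orbit. By finiteness (1), a minimal-dimensional orbit exists and is closed, and by the uniqueness of orbits of the closed type it is the one from Step 3.
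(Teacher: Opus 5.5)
The paper gives no proof of this statement; it quotes Wolf (1969), Theorems~2.6 and~3.3. Your outline therefore has to stand on its own. Steps~1 and~2 are fine. The key lemma holds because $\mathfrak q_x\cap\sigma\mathfrak q_x$ contains a Cartan subalgebra of $\mathfrak g$, so each of its Cartan subalgebras is one of $\mathfrak g$; you then complexify a Cartan subalgebra of $\mathfrak g_0\cap\mathfrak q_x$. Finiteness follows as you say.

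Step~3, however, rests on a false claim. Your ordering (with $\mathfrak a_0$ first) is the right one, but $\mathfrak g_0\cap\mathfrak q_x$ need not contain a minimal parabolic $\mathfrak p_0$, and need not be a real parabolic at all.
\begin{itemize}
\item For a compact real form, $\mathfrak p_0=\mathfrak g_0$, while every isotropy algebra is proper when $Q\neq G$.
\item For $F=G/B$ with $\mathfrak g_0$ not quasi-split, no Borel subalgebra contains $\mathfrak p_0^{\mathbb C}$, since the latter is not solvable.
\end{itemize}
The correct normal form is $\mathfrak q_x\supset\mathfrak b$, with $\mathfrak b$ built from a maximally split $\sigma$-stable Cartan subalgebra and a positive system containing the roots of $\mathfrak n$. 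Then $\mathfrak a+\mathfrak n\subset\mathfrak q_x$, so $A_0N_0$ fixes $x$, and $G_0x=K_0A_0N_0x=K_0x$ is compact by the Iwasawa decomposition. The same correction is needed in the characterization you propose in Step~4. Moreover, the corresponding points are $G_0$-conjugate because maximally split Cartan subalgebras are conjugate and the Weyl groups of the restricted roots and of $\mathfrak m_0$ are realized in $G_0$; conjugacy of minimal parabolics alone does not give this.

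The genuine gap is the uniqueness of the closed orbit. Your root count is only asserted; for non-Borel $\mathfrak q$, imaginary Levi roots also contribute, so it is not routine. Even granting it, it shows only that exactly one orbit has minimal dimension. It does not show that a closed orbit must have minimal dimension. Your remark that a compact orbit is a $K_0$-orbit is also unproved, and would not by itself settle this.

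Your last paragraph assumes that every orbit not of minimal type has a strictly lower-dimensional orbit in its closure. Applied to a closed orbit, this is exactly the uniqueness statement, so the argument is circular. It also uses, without justification, that $\overline{O}\setminus O$ consists of orbits of lower dimension; this holds here because the orbits are semialgebraic, but it needs to be said.

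What is missing is a fixed-point argument, which goes as follows.
\begin{itemize}
\item In a projective embedding, elements $X$ of $\mathfrak n_0$ act nilpotently, so $\lim_{t\to\infty}\exp(tX)[v]=[X^mv]$ exists and is fixed by $\exp(\mathbb{R}X)$.
\item Elements $H$ of $\mathfrak a_0$ act diagonalizably with real eigenvalues, so $\lim_{t\to\infty}\exp(tH)[v]$ also exists and is fixed by $\exp(\mathbb{R}H)$.
\item Induct along a chain of ideals of $\mathfrak a_0\oplus\mathfrak n_0$ contained in $\mathfrak n_0$ with one-dimensional steps, and then over a basis of $\mathfrak a_0$. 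This shows that every nonempty compact $A_0N_0$-invariant set, in particular every $\overline{G_0x}$, contains a point $y$ fixed by $A_0N_0$.
\item For such $y$, a Cartan subalgebra of $\mathfrak z_{\mathfrak g_0\cap\mathfrak q_y}(\mathfrak a_0)$ is a maximally split Cartan subalgebra of $\mathfrak g_0$ lying in $\mathfrak q_y$. Also, $\mathfrak q_y$ contains a Borel subalgebra of the type above, so $y$ lies in the orbit of Step~3.
\end{itemize}
This proves that the closed orbit is unique and lies in the closure of every orbit. The strict dimension inequality then follows from $\dim(\overline{O}\setminus O)<\dim O$.
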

We now specialize this general framework to the complex flag
manifolds of $G_2$. Recall that $G_2$ has only two real forms, namely its compact real form
and the split real form $G_{2(2)}$,  since the compact real form acts
transitively on every complex flag manifold of $G_2$, we henceforth
will specialize to the of $G_{2(2)}$ real orbits.
Consider the Borel subalgebra $\mathfrak{b}$ associated with the base $\Pi$ and let $B$ indicate the connected Lie subgroup of $G_{2}$ with Lie algebra $\mathfrak{b}$. Let $\mathfrak{q}_{\alpha_{1}}$ and $\mathfrak{q}_{\alpha_{2}}$ be the parabolic subalgebras of $\mathfrak{g}_{2}$ associated with the subsets of $\Pi$ given by $\{\alpha_{1}\}$ and $\{\alpha_{2}\}$, respectively. More explicitly, we have that
\begin{align} 
\label{eq: parabolics} 
    &\mathfrak{q}_{\alpha_{1}}=\left\{\begin{pmatrix}
    0&0&0&-2y&-2x_{1}&-2x_{2}&0\\
    x_{1}&a_{11}&a_{12}&a_{13}&0&-y&0\\
    x_{2}&a_{21}&a_{22}&a_{23}&y&0&0\\
    0&0&0&a_{33}&0&0&0\\
    0&0&0&x_{2}&-a_{11}&-a_{21}&0\\
    0&0&0&-x_{1}&-a_{12}&-a_{22}&0\\
    y&-x_{2}&x_{1}&0&-a_{13}&-a_{23}&-a_{33}
\end{pmatrix}\right\};\\
    &\mathfrak{q}_{\alpha_{2}}=\left\{\begin{pmatrix}
    0&0&-2z&-2y&-2x_{1}&-2x_{2}&0\\
    x_{1}&a_{11}&a_{12}&a_{13}&0&-y&z\\
    x_{2}&0&a_{22}&a_{23}&y&0&0\\
    0&0&0&-a_{11}-a_{22}&-z&0&0\\
    0&0&0&x_{2}&-a_{11}&0&0\\
    z&0&0&-x_{1}&-a_{12}&-a_{22}&0\\
    y&-x_{2}&x_{1}&0&-a_{13}&-a_{23}&a_{11}+a_{22}
\end{pmatrix}\right\}.
\end{align}
Let $Q_{\alpha_{1}}$ and $Q_{\alpha_{2}}$ be the corresponding connected parabolic Lie subgroups of $G_{2}.$ In what follows, we will need to view the flag spaces of $G_{2}$ in a more geometric way. More precisely, we will realize $G_{2}/Q_{\alpha_{i}}$ as the unique closed orbit of $G_{2}$ on $\mathbb{P}(V_{\omega_{i}})$, where $V_{\omega_{i}}$ denotes the irreducible representation of $\mathfrak{g}_{2}$ with highest weight $\omega_{i}$. This will also allow us to embed $G_{2}/B$ into the product of projective spaces. As shown in \cite[Section~23.3]{Fulton Harris}, this can be done  for any quotient $G/Q$. Let us briefly recall the construction of this embedding in general terms and then apply it to the flags of $G_{2}$.  
Let $G$ be a connected complex simple Lie group with Lie algebra $\mathfrak{g}$ and let $V_{\lambda}$ be an irreducible representation of $\mathfrak{g}$ with highest weight $\lambda$. Let $v$ be a highest weight vector. Consider $P_{\lambda}$ to be the parabolic subgroup of $G$ corresponding to the set of all simple roots which are perpendicular to $\lambda$ and suppose the action of $\mathfrak{g}$ is integrable to a Lie group action of $G$ on $V_{\lambda}$. Then, $V_{\lambda}$ coincides with the irreducible representation of $G$ of weight $\lambda$ and the map
\[i:\ G/P_{\lambda}\ \rightarrow\ \mathbb{P}(V_{\lambda})\]
\[gP\ \rightarrow\ [g\cdot v]\]
embeds $G/P_{\lambda}$ as the unique closed $G$-orbit of the action of $G$ on $\mathbb{P}(V_{\lambda}).$ 
 In the case at hand, the fundamental weights with respect to the chosen base are $\omega_{1}=-\epsilon_3$ with highest weight vector $v=e_{7}$ and $\omega_{2}=\epsilon_{1}-\epsilon_{3}$ with highest  weight vector $M=M(E_{13},0,0).$ 
The fundamental representation $V(\omega_{1})$ is the standard action of $\mathfrak{g}_{2}$ on $\mathbb{C}^{7}$ by matrix multiplication, while $V(\omega_{2})$ is the adjoint action of $\mathfrak{g}_{2}$ on itself. According to the construction described above, we obtain the following embeddings of the flag manifolds $G_{2}/Q_{\alpha_{i}}$ and $G_{2}/B$:
\begin{alignat*}{2}
    i_{1}:\ &G_{2}/Q_{\alpha_{1}}\ \hookrightarrow\ \mathbb{P}(V_{\omega_{1}}),\qquad i_{2}:\ &&G_{2}/Q_{\alpha_{2}}\ \hookrightarrow\ \mathbb{P}(V_{\omega_{2}})\\
    &g Q_{\alpha_{1}}\ \mapsto\ [g\cdot v]\qquad &&g Q_{\alpha_{2}}\ \mapsto\ [g\cdot M].\\
    \end{alignat*}
    These embeddings also yield
\begin{align*}
i_{3}:&\ G_{2}/B\ \hookrightarrow \mathbb{P}(V_{\omega_1})\times \mathbb{P}(V_{\omega_2})\\
    &gB\ \hookrightarrow ([g\cdot v],[g\cdot M]).    
\end{align*}
We remark that the map sending $gB$ to the pair $([g\cdot v],[g\cdot M])$ is injective. Indeed, suppose $([g\cdot v],[g\cdot M])=([g'\cdot v],[g'\cdot M]).$ This means that $gQ_{\alpha_{1}}=g'Q_{\alpha_{1}}$ and $gQ_{\alpha_{2}}=g'Q_{\alpha_{2}}$. Therefore, we have that $g^{-1}g'\in Q_{\alpha_1}\cap Q_{\alpha_2}=B$, so $gB=g'B.$

\subsection{Cauchy-Riemann real orbits}\label{CR_orbits section}
We briefly recall some notions of Cauchy–Riemann geometry that will be used in the study of real orbits in complex flag manifolds. A CR manifold is a smooth real manifold $M$ endowed with a complex subbundle
$$H^{0,1}M\subset T^{\mathbb C}M:=\mathbb C \otimes_{\mathbb R} TM$$ such that $H^{0,1}M\cap \overline{H^{0,1}M}=\{0\},$ and $H^{0,1}M$ is formally integrable, i.e.
$$[\Gamma(H^{0,1}M),\Gamma(H^{0,1}M)]\subset \Gamma(H^{0,1}M).$$
Its conjugate bundle is defined as $H^{1,0}M:=\overline{H^{0,1}M},$ and the underlying real CR distribution is
$$HM:=\Re\bigl(H^{1,0}M\oplus H^{0,1}M\bigr)\subset TM.$$ The \emph{CR dimension} and \emph{CR codimension} at $p\in M$ are defined by $$\dim_{\mathrm{CR}}M:=\dim_{\mathbb C}H_{p}^{0,1}M,
\qquad
\operatorname{codim}_{\mathrm{CR}}M:=\dim_{\mathbb R}M-2\,\dim_{\mathrm{CR}}M.$$ 
The pair $(n,k)$, where $n$ is the CR dimension and $k$ is the CR codimension, is called the \emph{CR type} of $M$. Thus, a CR manifold of type $(n,k)$ has real dimension $2n+k$, and when $k=1$ the CR manifold $M$ is of hypersurface type. A $CR$ manifold is said to be \emph{totally real} if its $CR$ dimension is zero, and \emph{totally complex} if its $CR$ codimension is zero. 
\par
The \emph{Levi form}  is an Hermitian form defined on $H^{1,0}M$ that measures the  obstruction to the involutivity of the real distribution
$HM$. If $p\in M$, $Z_p,W_p\in H_{p}^{1,0}M$, and $ Z, W$ are
local sections of $H^{1,0}M$ extending them, then the \emph{Levi form} is defined by
\begin{equation*}
    \mathcal L_{p}(Z_p,\overline W_p)
=
\frac{1}{2i}\,[ Z,\overline{ W}]_p
\mod \bigl(H_{p}^{1,0}M\oplus H_{p}^{0,1}M\bigr),
\end{equation*}
with values in $(T_{p}M/H_{p}M)^{\mathbb C}
\simeq
T_{p}^{\mathbb C}M/\bigl(H_{p}^{1,0}M\oplus H_{p}^{0,1}M\bigr).$ 
The Levi kernel at $p$ is defined by
$$
\mathcal K_p^{(1)}
:=
\bigl\{
Z_p\in H_p^{1,0}M:
\mathcal L_p(Z_p,\overline W_p)=0
\text{ for all }\overline W_p\in H_p^{0,1}M
\bigr\}.
$$

A notion of higher order Levi kernels, defined inductively from
$\mathcal K_p^{(1)}$, was introduced in \cite{Freeman1974}.
At each step of the construction, we assume that the preceding
kernels have locally constant rank, so that they form smooth
complex subbundles of $H^{1,0}M$. This condition is automatically
satisfied for homogeneous CR manifolds, since the kernels are
invariant under the transitive action by CR automorphisms.

Set $\mathcal L_p^{(1)}:=\mathcal L_p$.
For $r\geq 2$, suppose that $\mathcal K^{(r-1)}$ has been
defined as a smooth complex subbundle near $p$.
The $r$-th order Levi form at $p$ is the map
$$
\mathcal L_p^{(r)}:
\mathcal K_p^{(r-1)}\times H_p^{0,1}M
\longrightarrow
T_p^{\mathbb C}M/
\bigl(\mathcal K_p^{(r-1)}\oplus H_p^{0,1}M\bigr)
$$
defined by
$$
\mathcal L_p^{(r)}(Z_p,\overline W_p)
:=
[Z,\overline W]_p
\mod\bigl(\mathcal K_p^{(r-1)}\oplus H_p^{0,1}M\bigr),
$$
where $Z$ and $\overline W$ are local sections of
$\mathcal K^{(r-1)}$ and $H^{0,1}M$, respectively, extending
$Z_p$ and $\overline W_p$. The class on the right-hand side
is independent of the chosen extensions.
The $r$-th order Levi kernel at $p$ is then defined by
$$
\mathcal K_p^{(r)}
:=
\bigl\{
Z_p\in\mathcal K_p^{(r-1)}:
\mathcal L_p^{(r)}(Z_p,\overline W_p)=0
\text{ for all }\overline W_p\in H_p^{0,1}M
\bigr\}.
$$
Thus one obtains a decreasing sequence of complex subspaces
\begin{equation}
\label{Freeman_seq}
H_p^{1,0}M\supseteq\mathcal K_p^{(1)}
\supseteq\mathcal K_p^{(2)}
\supseteq\mathcal K_p^{(3)}
\supseteq\cdots .
\end{equation}

\begin{definition}\label{dfn_k_nndg}
A CR manifold $M$ is said to be $k$-nondegenerate at $p$
if $k$ is the smallest positive integer such that
$\mathcal K_p^{(k)}=\{0\}$.
In particular, $1$-nondegeneracy is the usual Levi
nondegeneracy, namely $\mathcal K_p^{(1)}=\{0\}$.
\end{definition}

For the real-analytic homogeneous CR manifolds considered here,
failure of finite nondegeneracy is equivalent to holomorphic
degeneracy. More precisely, if no positive integer $k$ satisfies
$\mathcal K_p^{(k)}=\{0\}$, then $M$ is locally CR equivalent
near $p$ to a product $M'\times\mathbb C^r$, where $M'$ is a
real-analytic CR manifold and $r>0$. Conversely, such a local
product decomposition implies that $M$ is not finitely
nondegenerate at $p$
(see \cite[Chapter~11]{BaouendiEbenfeltRothschild1999}).
\par
We now move in the context of homogeneous $CR$ manifolds, i.e. $G_0$-homogeneous manifolds that admit a $G_0$-invariant $CR$ structure.  By transitivity and $G_0$-invariance of the CR structure,
the pointwise properties introduced above hold at every
point whenever they hold at one point. For a homogeneous CR manifolds, it is  convenient then to use the 
formalism of CR algebras introduced in \cite{MedoriNacinovich2005}.

\begin{definition}
\label{dfn_CR_alg}
  A CR algebra is a pair $(\mathfrak g_0,\mathfrak q),$ where $\mathfrak g_0$ is a real Lie algebra  and $\mathfrak q$ a complex subalgebra of the complexification  $\mathfrak g{:=}\mathbb C\otimes_{\mathbb R}\mathfrak g_0$,  such that $\dim_{\mathbb R}\bigl(\mathfrak g_0/(\mathfrak g_0\cap\mathfrak q)\bigr){<}\infty.$
 
\end{definition}
Let $G_0$ be a real Lie group with Lie algebra $\mathfrak g_0$,
and let $I_0\subset G_0$ be the connected analytic subgroup
with Lie algebra $\mathfrak i_0:=\mathfrak g_0\cap\mathfrak q$.
Assume that $I_0$ is closed in $G_0$. Then the quotient
$M=G_0/I_0$ is a smooth manifold, and
$(\mathfrak g_0,\mathfrak q)$ determines a
$G_0$-invariant CR structure on $M$. If $\pi:G_{0}\to M$ denotes  the standard quotient projection, and $o=eI_{0}\in M$ the base point then we have
 $\mathfrak q=(d\pi_{e}^{\mathbb C})^{-1}(H_{o}^{0,1}M).$
At the base point one has the canonical identifications $$H_{o}^{0,1}M\simeq \mathfrak q/(\mathfrak q\cap\overline{\mathfrak q}), \qquad
H_{o}^{1,0}M\simeq \overline{\mathfrak q}/(\mathfrak q\cap\overline{\mathfrak q}),$$
and $$T_{o}^{\mathbb C}M/
\bigl(H_{o}^{1,0}M\oplus H_{o}^{0,1}M\bigr)
\simeq
\mathfrak g/(\mathfrak q+\overline{\mathfrak q}),$$ where $\mathfrak q\cap\overline{\mathfrak q}=\mathbb C\otimes\mathfrak i_0$. Accordingly the $CR$ dimension and codimension can be computed as 
\begin{equation}\label{eq:CRdim-CRalg}
\dim_{\mathrm{CR}}M
=
\dim_{\mathbb C}\mathfrak q
-
\dim_{\mathbb C}(\mathfrak q\cap\overline{\mathfrak q}),
\qquad
\operatorname{codim}_{\mathrm{CR}}M
=
\dim_{\mathbb C}\mathfrak g
-
\dim_{\mathbb C}(\mathfrak q+\overline{\mathfrak q}).
\end{equation}
A CR algebra $(\mathfrak g_0,\mathfrak q)$ is called \emph{fundamental} if the complex Lie subalgebra generated by $\mathfrak q+\overline{\mathfrak q}$ is the whole of $\mathfrak g$, that is, $\langle\mathfrak q+\overline{\mathfrak q}\rangle_{\mathrm{Lie}}=\mathfrak g$,  equivalently, on the associated homogeneous CR manifold, the pullback of the CR distribution is bracket-generating, namely its iterated Lie brackets generate the whole tangent bundle.\par
The Levi form at the base point is given purely algebraically, if
\(Z,W\in\mathfrak q\), then
\begin{equation}\label{eq:Levi-CRalg}
\mathcal L_{o}(Z,\overline W)
=
[Z,\overline W]
\mod (\mathfrak q+\overline{\mathfrak q}).
\end{equation}
Thus the first Levi kernel is represented by
$\mathfrak q^{(1)}
:=
\{Z\in\mathfrak q:[Z,\overline{\mathfrak q}]
\subset \mathfrak q+\overline{\mathfrak q}\}.$ More generally, one defines recursively the Freeman sequence \eqref{Freeman_seq}
\begin{equation}\label{eq:Freeman-filtration-CRalg}
\mathfrak q=\mathfrak q^{(0)}\supset \mathfrak q^{(1)}\supset \mathfrak q^{(2)}\supset\cdots
\end{equation}
by $\mathfrak q^{(r+1)}
:=
\{Z\in \mathfrak q^{(r)}:[Z,\overline{\mathfrak q}]
\subset \mathfrak q^{(r)}+\overline{\mathfrak q}\},
$ for $r\geq 0.$ Each \(\mathfrak q^{(r)}\) is a complex Lie subalgebra of \(\mathfrak q\), and one has $K^{(r)}_o\simeq\mathfrak q^{(r)}/(\mathfrak q\cap\overline{\mathfrak q})$ for $r\ge1$.
Thus Levi nondegeneracy means $1$-nondegeneracy, i.e.\ $\mathfrak q^{(1)}=\mathfrak q\cap\overline{\mathfrak q}$, i.e. the Levi form has trivial kernel.
\begin{definition}
A $G_0$-homogeneous CR manifold with associated CR algebra
$(\mathfrak g_0,\mathfrak q)$ at $p$ is said to be
$k$-nondegenerate at $p$ if $k$ is the smallest positive
integer such that $\mathfrak q^{(k)}=\mathfrak q\cap\overline{\mathfrak q}$. In particular, a totally real CR manifold is
$1$-nondegenerate according to this convention.
\end{definition}
This formalism is especially natural for a real orbit $M$ in complex flag
manifolds $F$, since its  embedding  yields natural $CR$-manifold structures. Let $F_{\Pi_0}=  G/Q_{\Pi_0}$ be as above, let
$G_{0}\subset   G$ be a real form with Lie algebra $\mathfrak g_0$, and let $M=G_{0}\cdot p\subset X_{\Pi_0}$
be a real orbit, with $p\in F_{\Pi_0}$. Since $F_{\Pi_0}$ is a complex manifold, $M$ inherits the natural
CR structure $$H_{p}^{0,1}M=T_{p}^{\mathbb C}M\cap T_{p}^{0,1}F_{\Pi_0}.$$ If $p=gQ_{\Pi_0}\in F_{\Pi_0}$, then the isotropy subgroup of the orbit at $p$
is $I_{p}
=
G_{0}\cap gQ_{\Pi_0}g^{-1},$ with Lie algebra $\mathfrak i_{p}
=
\mathfrak g_0\cap \operatorname{Ad}(g)(\mathfrak q_{\Pi_0}).$
The corresponding CR algebra is given by $(\mathfrak g_0,\mathfrak q_{p}),
$ with $
\mathfrak q_{p}:=\operatorname{Ad}(g)(\mathfrak q_{\Pi_0}).$ 
Namely every $G_0$ real orbit in a  complex flag manifold is a homogeneous  $CR$ manifold with $G_0$-invariant CR structure, and the associated $CR$ algebra at $p$ is given by $(\mathfrak g_0,\mathfrak q_{p})$ where $\mathfrak q_{p}$ is parabolic.

\section{Orbits on the complete flag $G_{2}/B$}
\subsection{Counting orbits on $G/B$}

In this section, we will count the number of orbits of the action of the split real form $G_{2(2)}$ on the flag manifold $G_{2}/B$. Before we proceed with the calculations in this specific case, we will describe the general algorithm provided in \cite{Aomoto1966}. 
Let $\mathfrak{g}_{0}$ be a real semisimple Lie algebra with complexification $\mathfrak{g}$ and denote by $\Tilde{\sigma}$ the conjugation determined by $\mathfrak{g}_{0}$. Let $\mathfrak{g}_{u}$ be a compact real form of $\mathfrak{g}$ that is invariant under $\Tilde{\sigma}$ and let $\sigma$ denote the restriction of $\Tilde{\sigma}$ to $\mathfrak{g}_{u}$. Let $\tilde{\theta}$ be the conjugation of $\mathfrak{g}$ with respect to $\mathfrak{g}_{u}$. We know that $\tilde{\theta}(\mathfrak{g}_{0})=\mathfrak{g}_{0}$. Let $\theta=\tilde{\theta}_{|\mathfrak{g}_{0}}$. One then has that  $\theta$ is a Cartan involution of $\mathfrak{g}_{0}$. Let $\mathfrak{g}_{0}=\mathfrak{k}_{0}\oplus\mathfrak{p}_{0}$
be the Cartan decomposition of $\mathfrak{g}_{0}$ with respect to $\theta$. Consider $\mathfrak{h}_{0}$ to be a CSA of $\mathfrak{g}_{0}$ that is $\theta$-invariant and let $\mathfrak{h}_{0}=\mathfrak{h}_{\mathfrak{k}_{0}}\oplus\mathfrak{h}_{\mathfrak{p}_{0}}$ be its decomposition into its toral and vectorial parts. Then, $\mathfrak{h}_{u}=\mathfrak{h}_{\mathfrak{k}_{0}}\oplus i \mathfrak{h}_{\mathfrak{p}_{0}}$ is a CSA of $\mathfrak{g}_{u}$ and $\mathfrak{h}=(\mathfrak{h}_{u})^{\mathbb{C}}$ is a CSA of $\mathfrak{g}.$ Let $\Delta$ indicate the root system of $\mathfrak{g}$ with respect to $\mathfrak{h}$ and denote by $\Pi$ a basis. Let $\Delta_{+}$ denote the set of positive roots of $\Delta$ with respect to $\Pi$. 
Let $G$ be a connected complex semisimple Lie group with Lie algebra $\mathfrak{g}$ such that $\Tilde{\sigma}$ can be lifted from $\mathfrak{g}$ to $G$. Let $G_{0}$ denote the set of fixed points of $\Tilde{\sigma}$ and let $G_{u}$ be the connected Lie subgroup of $G$ with Lie algebra $\mathfrak{g}_{u}$. Consider $T$ to be the maximal torus of $G_{u}$ with Lie algebra $\mathfrak{h}_{u}$. Denote by $W=N(T)/T$ the Weyl group of $G_{u}$ with respect to $\mathfrak{h}_{u}$. For each $w\in W$, we choose one representative element $\omega(w)$ inside $N(T).$  More specifically, one has that $\operatorname{Ad}(\omega(w))_{|\mathfrak{h}_{u}}=w.$  
Let us now fix $w\in W$ with representative $\omega$ and suppose that $\Tilde{\sigma}\circ w$ is an involution on $\mathfrak{h}_{u}$. Let $W_{\operatorname{inv}}$ denote the set of all such elements. We can consider the eigenspace decomposition of $\mathfrak{h}_{u}$ given by the two subspaces
\[
    \mathfrak{h}^{+}_{w}=\{\ X\in\mathfrak{h}_{u}\ |\ \Tilde{\sigma}\circ w(X)=X\};\qquad
    \mathfrak{h}^{-}_{w}=\{X\in\mathfrak{h}_{u}\ |\ \Tilde{\sigma}\circ w(X)=-X\}.\]

Let $T^{+}_{w}$ and $T^{-}_{w}$ denote the connected Lie subgroups of $T$ with Lie algebras $\mathfrak{h}^{+}_{w}$ and $\mathfrak{h}^{-}_{w}$ respectively. 
Given $w\in \operatorname{W}_{\text{inv}}$ with $\omega(w)=\omega$, set $a_{0}^{-1}=\tilde{\sigma}(\omega(w))\omega(w)$. We may now define the following sets:
\begin{alignat*}{2}
    &\Phi=\{ g\in G\ |\ \Tilde{\sigma}(g)=g^{-1}\},\qquad& &\Phi_{0}=\{ g\in G\ |\ g=g_{1}\Tilde{\sigma}(g_{1})^{-1}\ \text{for some $g_{1}\in G$}\};\\
    &\hat{T}^{+}_{w}=\{a\in T^{+}_{w}\ |\ a^{2}=a_{0}\},\qquad&&
    \Psi=\{\omega(w) a\ |\  w\in W_{\operatorname{inv}},\ a\in \hat{T}^{+}_{w}\};\\
    &\Psi_{0}=\Psi\cap\Phi_{0};
\end{alignat*}
We now define an equivalence relation $\sim$ on $\Psi$. We say that $\omega a\sim \omega' a'$ if and only if $\omega=\omega'$ and $a\equiv a'\ \operatorname{mod}(T^{+}_{w}\cap T^{-}_{w})$. Let $[\Psi]$ indicate the set of equivalence classes of $\Psi$ with respect to the relation $\sim$. 
We can now consider the following subsets of $G_{u}:$
\[
    \Theta=\{g\in G_{u}\ |\ \Tilde{\sigma}(g)=g^{-1}\};\qquad
    \Theta_{0}=\{g\in G_{u}\ |\ g=h\Tilde{\sigma}(h)^{-1}\ \text{for some $h\in G_{u}$}\}.
\]
It can be proven that $\Theta_{0}$ coincides with $\operatorname{exp(i\mathfrak{p}_{0}}).$ 
We have thus established all the necessary preliminaries to count the number of $G_{0}$-orbits on the full flag $G/B$. Indeed, one may show that $\Psi_{0}=\Theta_{0}\cap\Psi$. For every $u\in \Theta_{0}$, there exists $\sqrt{u}\in\Theta_{0}$ such that $(\sqrt{u})^{2}=u$. Let us choose a square root for each element of $\Psi$ and define
\[\sqrt{[\Psi_{0}]}=\{\sqrt{\omega a}\ |\ \omega a\in [\Psi_{0}]\}.\]
We can now state the following
\begin{theorem}[Theorem $2$ in \cite{Aomoto1966}]\label{teo: Aomoto, teorema finitezza orbite sul flag completo}
$G=B\cdot\sqrt{[\Psi_{0}]}\cdot G_{0}$ holds.
The double coset $B\backslash G/G_{0}$ is finite and can be identified with $\sqrt{[\Psi_{0}]}$ canonically.
\end{theorem}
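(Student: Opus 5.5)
Our strategy is to follow Aomoto's original argument and reduce the statement to the two classical structure results on symmetric-space decompositions: the Iwasawa/Bruhat-type decomposition of $G$ and the Cartan-type decomposition $G=G_u\cdot\exp(i\mathfrak p_0)\cdot$ (suitable factors).

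\emph{Step 1: from double cosets to twisted conjugacy classes.} Consider the map $g\mapsto g\,\tilde\sigma(g)^{-1}$, which sends $G$ onto $\Phi_0$ and is constant exactly on right cosets $gG_0$. Hence $G/G_0\cong\Phi_0$. The left action of $B$ on $G/G_0$ becomes the twisted action $b\ast x=b\,x\,\tilde\sigma(b)^{-1}$ on $\Phi_0$. Therefore $B\backslash G/G_0$ is in bijection with the $\ast$-orbits of $B$ on $\Phi_0$.

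\emph{Step 2: normal form via the Bruhat decomposition.} Write $x\in\Phi_0$ using $G=\bigsqcup_w B\,\omega(w)\,B$. The condition $\tilde\sigma(x)=x^{-1}$ forces the Bruhat cell of $x$ to be compatible with $\tilde\sigma$. Concretely, $\tilde\sigma$ maps $B$ to an opposite-type Borel, since for the split form $\tilde\sigma$ preserves $\mathfrak h$ and the roots. After twisted $B$-conjugation one arranges $x=\omega(w)t$ with $t\in T^{\mathbb C}$, and the involutivity forces $\tilde\sigma\circ w$ to be an involution of $\mathfrak h_u$, i.e. $w\in W_{\rm inv}$. Twisted conjugation by the torus changes $t$ by elements $s\,w^{-1}(\tilde\sigma(s))^{-1}$, i.e. by the $-1$ eigencomponent. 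Thus $t$ can be pushed into $T^+_w$, and the condition $x\tilde\sigma(x)=1$ becomes $a^2=a_0$. This places $x$ in $\Psi$, and since $x\in\Phi_0$, in $\Psi_0$.

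\emph{Step 3: uniqueness and finiteness.} Two normal forms $\omega a,\omega'a'$ in the same twisted $B$-orbit must lie in the same Bruhat cell, so $\omega=\omega'$. Comparing torus parts shows that $a$ and $a'$ differ by an element of $T^+_w\cap T^-_w$, which is a finite $2$-torsion group. This identifies the orbits with $[\Psi_0]$. Finiteness follows because $W_{\rm inv}$ is finite and each $\hat T^+_w$ modulo $T^+_w\cap T^-_w$ is finite: the square roots of $a_0$ in a torus form a single coset of the $2$-torsion.

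\emph{Step 4: taking square roots.} Pass back from $\Phi_0$ to $G$ by choosing preimages. Using $\Psi_0=\Theta_0\cap\Psi$ and $\Theta_0=\exp(i\mathfrak p_0)$, each $u\in\Psi_0$ has a square root $\sqrt u\in\exp(i\mathfrak p_0)$. Since $\tilde\sigma$ acts on $\exp(i\mathfrak p_0)$ by inversion, we get $\sqrt u\,\tilde\sigma(\sqrt u)^{-1}=u$. So $\sqrt u$ is a preimage of $u$ under the map of Step 1, which yields $G=B\cdot\sqrt{[\Psi_0]}\cdot G_0$ together with the canonical identification.

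The main obstacle is Step 2, the reduction of an arbitrary $x\in\Phi_0$ to torus-times-Weyl form. It requires controlling how $\tilde\sigma$ interacts with the Bruhat decomposition and the unipotent radical, and in general one works with a $\theta$-stable Cartan subalgebra and Matsuki-type arguments. For the split form $G_{2(2)}$, $\tilde\sigma$ preserves $B$, which simplifies the reduction. I would carry it out by an induction on unipotent root coordinates, using Lang-type surjectivity of $n\mapsto n\,\tilde\sigma(n)^{-1}$ on connected unipotent groups.
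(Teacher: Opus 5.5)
The paper does not prove this statement; it quotes it from Aomoto, so the comparison has to be with the standard argument. Your skeleton is the right one: the bijection $gG_0\mapsto g\tilde\sigma(g)^{-1}$ turns $B\backslash G/G_0$ into twisted $B$-orbits on $\Phi_0$, then a Bruhat reduction into $N(T)$, a torus analysis, and square roots in $\exp(i\mathfrak p_0)$. Steps 1 and 4 are fine.

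There is, however, a genuine gap. Step 2 carries all the content, it is only asserted, and it starts from a false claim: $\tilde\sigma$ does not send $B$ to an opposite-type Borel. For the split form $\tilde\sigma(\mathfrak g_\alpha)=\mathfrak g_\alpha$ for every root, so $\tilde\sigma(B)=B$, as your own last paragraph says. This is exactly what makes $b\ast x=bx\tilde\sigma(b)^{-1}$ preserve the cells $B\omega B$, which Steps 2 and 3 need.

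For the theorem as stated (arbitrary $\mathfrak g_0$ and arbitrary $\theta$-stable $\mathfrak h_0$), $\tilde\sigma(B)$ is the Borel subgroup of $\tilde\sigma(\Delta_+)$. In general this is neither $B$ nor its opposite, and for non-quasi-split forms no $\tilde\sigma$-stable Borel exists at all. One has to work with $G=\bigsqcup_w B\,\omega(w)\,\tilde\sigma(B)$. Reading $\tilde\sigma(x)=x^{-1}$ on this relative decomposition is what yields $(\tilde\sigma\circ w)^2=1$, i.e.\ $w\in W_{\operatorname{inv}}$. So your argument covers at most the case $\tilde\sigma(B)=B$: enough for $G_{2(2)}$, not for the cited theorem.

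The tool you propose for the reduction is also wrong as stated. The map $n\mapsto n\tilde\sigma(n)^{-1}$ is not surjective on a unipotent group: on $\mathbb C$ with complex conjugation its image is $i\mathbb R$. What does hold is $H^1(\mathbb Z/2,V)=0$ for a complex unipotent group $V$ with an antiholomorphic involution $\tau$: every $u$ with $u\,\tau(u)=1$ equals $n^{-1}\tau(n)$. It must be applied to a twisted involution on a specific subgroup, which you do not identify.

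In the split case the missing argument runs as follows.
Twisting by $U\cap\omega U^-\omega^{-1}$ brings $x$ to $\omega hu$ with $h\in H$ and $u\in U$.
Uniqueness of the normal form in $B\omega B=(U\cap\omega U^-\omega^{-1})\,\omega\,H\,U$, applied to $\tilde\sigma(x)=x^{-1}$, forces $u\in V:=U\cap\omega^{-1}U\omega$ and $\omega h\in\Phi$.
It also forces $\tau'(u)=u^{-1}$ for the antiholomorphic involution $\tau'(n)=(\omega h)^{-1}\tilde\sigma(n)\,\omega h$ of $V$.
Writing $u=n^{-1}\tau'(n)$ and twisting by $\omega h\,n\,(\omega h)^{-1}\in U$ then gives $\omega h$.

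The torus step is likewise too quick. Twisting by the complex torus $H$ changes $t$ only by elements of $\exp(\mathfrak h^-_w\oplus i\mathfrak h^+_w)$. The $\exp(i\mathfrak h^-_w)$-component is removed not by twisting but by the condition $t\,\tau(t)=a_0\in T$, with $\tau=\tilde\sigma\circ w$, together with $H=T\cdot\exp(i\mathfrak h_u)$. Finally, Step 3 needs that $s\mapsto s\tau(s)^{-1}$ maps $T$ onto exactly $T^-_w$; this gives both directions of the equivalence modulo $T^+_w\cap T^-_w$.
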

We now turn to the problem of determining the dimension of each orbit. Fix $\gamma=\sqrt{\omega a}$ for some $\omega a\in [\Psi_{0}].$ Consider the $G_{0}-$orbit through $B\gamma$. Its isotropy group is
\[S=\{g\in G_{0}\ |\ \gamma g \gamma^{-1}\in B\}.\]
Define $\Tilde{\sigma_{1}}=\operatorname{Ad}(\omega a)\circ \Tilde{\sigma}$ and let $\mathfrak{h}_{1}$ be the set of $\Tilde{\sigma}_{1}-$fixed points. Then, $\mathfrak{h}_{1}$ is a real form of $\mathfrak{h}$. Define the following subsets of $\Delta_{+}:$
\[\Delta_{+}^{\Tilde{\sigma}_{1}}=\{\alpha\in \Delta_{+}\ |\ \Tilde{\sigma}_{1}(\alpha)>0\},\qquad \Delta_{-}^{\Tilde{\sigma}_{1}}=\{\alpha\in \Delta_{+}\ |\ \Tilde{\sigma}_{1}(\alpha)<0\}.\]
Let $\mathfrak{n}_{1}$ be the Lie algebra
\[\mathfrak{n}_{1}=\sum_{\alpha\in \Delta_{+}^{\sigma_{1}}}\mathbb{R}(X_{\alpha}+\sigma_{1}X_{\alpha})+\sum_{\alpha\in \Delta_{+}^{\sigma_{1}}}i\mathbb{R}(X_{\alpha}-\sigma_{1}X_{\alpha}).\]
One has that the Lie algebra $\mathfrak{i}_{\gamma}$ of the isotropy group $I_{\gamma}$ is $\mathfrak{i}_{\gamma}=\operatorname{Ad}_{\gamma^{-1}}(\mathfrak{h}_{1}+\mathfrak{n}_{1})$.
Let $m_{1}=\operatorname{dim}\mathfrak{n}_{1}$. Then, it can be proven that 
\[\operatorname{dim}\mathcal{O}(\gamma)=|\Delta|-m_{1}.\]
\begin{remark}\label{rem: from right to left}
The algorithm described in \cite{Aomoto1966} computes the number and the dimension of the 
$G_{0}$-orbits on $B\backslash G$, while we will study the left $G_{0}$-orbits on $G/B$. 
Assume $\tilde\sigma(B)=B$, which holds for the split real form $G_{2(2)}$: all roots are 
real, since $\tilde\sigma=-1$ on $\mathfrak h_{u}$ and $\alpha(\mathfrak h_{u})\subset 
i\mathbb R$, so $\tilde\sigma(\mathfrak g_{\alpha})=\mathfrak g_{\alpha}$ for every 
$\alpha\in\Delta$. Then the map 
 \begin{align*}
        \phi:&\ B\backslash G\ \rightarrow\ G/B\\
        &Bg\ \mapsto\ \tilde{\sigma}(g)^{-1}B.
    \end{align*}
is a well defined bijection, because $\tilde\sigma(bg)^{-1}\in\tilde\sigma(g)^{-1}B$ for 
$b\in B$. It is anti-$G_{0}$-equivariant, in fact  for $h\in G_{0}$, $\phi(Bgh)=h^{-1}\cdot\phi(Bg).$ Hence $\phi$ induces a bijection $B\backslash G/G_{0}\to G_{0}\backslash G/B$, and 
$\mathrm{Stab}_{G_{0}}(\phi(x))=\mathrm{Stab}_{G_{0}}(x)$ for every $x\in B\backslash G$. In 
particular, corresponding orbits have the same real dimension. Let $\gamma_{1},\dots,\gamma_{m}\in\sqrt{[\Psi_{0}]}\subset\Theta_{0}$ be representatives of $B\backslash G/G_{0}$, see Theorem~\ref{teo: Aomoto, teorema finitezza orbite sul flag completo}. By anti-equivariance, $\tilde\sigma(\gamma_{1})^{-1}B,\dots,\tilde\sigma(\gamma_{m})^{-1}B$ are representatives of the left $G_{0}$-orbits on $G/B$. Since $\tilde\sigma(\gamma)=\gamma^{-1}$ for $\gamma\in\Theta_{0}$, we have $\tilde\sigma(\gamma_{j})^{-1}B=\gamma_{j}B$, so the same elements $\gamma_{1},\dots,\gamma_{m}$ can be used, and $\dim G_{0}\gamma_{j}B=\dim B\gamma_{j}G_{0},$ since $G_{0}\cap\gamma_{j}B\gamma_{j}^{-1}=G_{0}\cap\gamma_{j}^{-1}B\gamma_{j}$.
\end{remark}

\subsection{Explicit calculations for $G_{2}/B$}\label{sec:explicit-G2-B}
Let us apply the algorithm described in the previous section to compute the number of $G_{2(2)}-$orbits on the  complete flag manifold $G_{2}/B$ of real dimension $12$. As explained in Remark \ref{rem: from right to left}, it will suffice to compute the number and the dimension of the $G_{0}-$orbits on the coset space $B\backslash G$.\\
First of all, let us remark that the involution $\tilde{\sigma}$ of $\mathfrak{g}_{2}$ defining the split real form $\mathfrak{g}_{2(2)}$ simply consists of complex conjugation. The real form $\mathfrak{g}_{u}$ we constructed in section \ref{sec: all on g2} is clearly $\tilde{\sigma}-$invariant and determines the Cartan involution $\theta$ we described above. We further remark that the Cartan subalgebra $\mathfrak{h}_{0}$ of $\mathfrak{g}_{2(2)}$  illustrated in \eqref{eq: Cartan subalgebra} is $\theta-$invariant. This allows us to apply the machinery developed in the previous sections.
Because $\mathfrak{h}_{\mathfrak{k}_{0}}=0$, we have that $\mathfrak{h}_{u}=i\mathfrak{h}_{0}.$ The conjugation $\sigma=\tilde{\sigma}_{|\mathfrak{g}_{u}}$ sends an element $H$ of $\mathfrak{h}_{u}$ to $-H$. Let $W$ be the Weyl group of $\mathfrak{g}_{2}$. Because of the way $\sigma$ acts on $\mathfrak{h}_{u}$, we know that every element $w\in W$ commutes with $\sigma.$  As a result, $\sigma\circ w$ is an involution if and only if $w$ is an involution. Therefore, to determine the various orbits, we will have to look at the conjugacy classes of elements of $W$ that act as involutions. The conjugacy classes in this case are exactly $4$ and can be represented as follows:
\begin{itemize}
    \item [$(1)$] $\boxed{w_1}$ $\operatorname{I}_7=\operatorname{exp}(i0)$;
    \item[$(2)$] \begin{itemize}
        \item [$\boxed{w_{2}}$] $\begin{pmatrix}
        1&0&0\\
        0&\begin{pmatrix}
            0&i&0\\
            i&0&0\\
            0&0&1\end{pmatrix}&0\\
            0&0&\begin{pmatrix}
            0&-i&0\\
            -i&0&0\\
            0&0&1
        \end{pmatrix}\end{pmatrix}=\operatorname{exp}\left(i\frac{\pi}{2}(X_{\epsilon_1-\epsilon_2}+X_{\epsilon_2-\epsilon_1})\right);$
        \item [$\boxed{w_{3}}$] $\begin{pmatrix}
        1&0&0\\
        0&\begin{pmatrix}
            1&0&0\\
            0&0&i\\
            0&i&0\end{pmatrix}&0\\
            0&0&\begin{pmatrix}
            1&0&0\\
            0&0&-i\\
            0&-i&0
        \end{pmatrix}\end{pmatrix}=\operatorname{exp}(i\frac{\pi}{2}(X_{\epsilon_2-\epsilon_3}+X_{\epsilon_3-\epsilon_2}));$
              \item [$\boxed{w_{4}}$] $\begin{pmatrix}
        1&0&0\\
        0&\begin{pmatrix}
            0&0&i\\
            0&1&0\\
            i&0&0\end{pmatrix}&0\\
            0&0&\begin{pmatrix}
            0&0&-i\\
            0&1&0\\
            -i&0&0
        \end{pmatrix}\end{pmatrix}=\operatorname{exp}(i\frac{\pi}{2}(X_{\epsilon_1-\epsilon_3}+X_{\epsilon_3-\epsilon_1}));$
        \end{itemize}
     \item[$(3)$] 
     \begin{itemize}
            \item [$\boxed{w_{5}}$]$\begin{pmatrix}
            -1&0&0\\
        0&0&\begin{pmatrix}
                0&i&0\\
                -i&0&0\\
                0&0&1
            \end{pmatrix}\\
            0&\begin{pmatrix}
                0&-i&0\\
                i&0&0\\
                0&0&1
            \end{pmatrix}&0\end{pmatrix}=\operatorname{exp}(i\frac{\pi}{2}(X_{\epsilon_3}+X_{-\epsilon_3}));$
                            \item [$\boxed{w_{6}}$]
        $\begin{pmatrix}
            -1&0&0\\
        0&0&\begin{pmatrix}
                1&0&0\\
                0&0&i\\
                0&-i&0
            \end{pmatrix}\\
            0&\begin{pmatrix}
                1&0&0\\
                0&0&-i\\
                0&i&0
            \end{pmatrix}&0\end{pmatrix}=\operatorname{exp}(i\frac{\pi}{2}(X_{\epsilon_1}+X_{\epsilon_{-1}}));$
                            \item [$\boxed{w_{7}}$]
        $\begin{pmatrix}
            -1&0&0\\
        0&0&\begin{pmatrix}
                0&0&-i\\
                0&1&0\\
                i&0&0
            \end{pmatrix}\\
            0&\begin{pmatrix}
                0&0&i\\
                0&1&0\\
                -i&0&0
            \end{pmatrix}&0\end{pmatrix}=\operatorname{exp}(i\frac{\pi}{2}(X_{\epsilon_2}+X_{-\epsilon_2}));$
            \end{itemize}
            \item[$(4)$] $\boxed{w_{8}}\begin{pmatrix}
                -1&0&0\\
                0&0&-I_{3}\\
                0&-I_{3}&0
            \end{pmatrix};$
    \end{itemize}
We shall compute the number of orbits and the dimension of the orbits in each of these cases. We note that in each case $\sigma(\omega)\omega=1.$
\begin{itemize}
    \item [$(1)$]
     $\sigma\circ w([0,ih_{1},ih_{2},ih_{3},-ih_{1},-ih_{2},-ih_{3}])=-[0,ih_{1},ih_{2},ih_{3},-ih_{1},-ih_{2},-ih_{3}].$\\
    Therefore, we have that:
    {\small
    \begin{alignat*}{2}
    &\mathfrak{h}^{+}_{w}=\{0\},\qquad&& \mathfrak{h}^{-}_{w}=\mathfrak{h};\\  
    &T^{+}_{w}=\{I_{7}\},\qquad &&T^{-}_{w}=\{[1,z_{1},z_{2},(z_{1}z_{2})^{-1},z_{1}^{-1},z_{2}^{-1},z_{1}z_{2}]\};\\
    &\hat{T}^{+}_{w}=\{I_{7}\},\qquad&& T^{+}_{w}\cap T^{-}_{w}=\{I_{7}\}\\
    &[\Psi]=[\Psi_{0}]=\{[I_{7}]\},\qquad&& 
    \sqrt{[\Psi_{0}]}=\{[I_{7}]\}.
    \end{alignat*}
    }

As for the dimensions, we remark that $\tilde{\sigma}_{1}=\operatorname{Ad}(I_{7})=\operatorname{Id}$ acts trivially on the roots, so that $\Delta^{+}_{\tilde{\sigma}_{1}}=\Delta^{+}.$
Let us compute the action of $\tilde{\sigma}_{1}$ on the set of positive roots of $\Delta$ with respect to the basis $\Pi=\{\alpha_{1},\alpha_{2}\}.$ In this case of $\tilde{\sigma}_{1}=\operatorname{Id}$, so we  have that $\Delta^{+}_{\tilde{\sigma_{1}}}=\Delta^{+}.$
We infer that that
\[\mathfrak{n}_{1}=\mathbb{R}X_{\alpha_{1}}+\mathbb{R}X_{\alpha_{2}}+\mathbb{R}X_{\alpha_{1}+\alpha_{2}}+\mathbb{R}X_{3\alpha_{1}+\alpha_{2}}+\mathbb{R}X_{2\alpha_{1}+\alpha_{2}}+\mathbb{R}X_{3\alpha_{1}+2\alpha_{2}}\]
and so $m_{1}=6$.
Thus, $\operatorname{dim}_{\mathbb{R}}(\mathcal{O}(I_{7}))=6.$ This shows that $\mathcal{O}(I_{7})$ is the closed orbit.
    \item[$(2)$] Now, we will work with $w_{2},w_{3}$ and $w_{4}.$
    \begin{itemize}
    \item[$\boxed{w=w_{2}}$]
    $\sigma\circ w([0,ih_{1},ih_{2},ih_{3},-ih_{1},-ih_{2},-ih_{3}])=-[0,ih_{2},ih_{1},ih_{3},-ih_{2},-ih_{1},-ih_{3}].$
    {\small
    \begin{alignat*}{2}
    &\mathfrak{h}^{+}_{w}=\{[0,ih_{1},-ih_{1},0,-ih_{1},ih_{1},0]\},\qquad && \mathfrak{h}^{-}_{w}=\{[0,ih_{1},ih_{1},-2ih_{1},-ih_{1},-ih_{1},2ih_{1}]\};\\
    &T^{+}_{w}=\{[1,z_{1},z_{1}^{-1},1,z_{1}^{-1},z_{1},1]\},\qquad && T^{-}_{w}=\{[1,z_{1},z_{1},z_{1}^{-2},z_{1}^{-1},z_{1}^{-1},z_{1}^{2}\};\\
    &\hat{T}^{+}_{w}=\{I_{7},[1,-1,-1,1,-1,-1,1]\},\qquad&& T^{+}_{w}\cap T^{-}_{w}=\{I_{7},[1,-1,-1,1,-1,-1,1]\};\\
    &[\Psi]=[\Psi_{0}]=\{[\omega(w_{2})]\},\qquad&& \sqrt{[\Psi_{0}]}=\{\gamma_{2}=\operatorname{exp}(i\frac{\pi}{4}(X_{\epsilon_1-\epsilon_2}+X_{\epsilon_2-\epsilon_1}))\};
    \end{alignat*}
    }\[\tilde{\sigma}_{1}(\alpha)([0,z_{1},z_{2},z_{3},-z_{1},-z_{2},-z_{3}])=\alpha([0,z_{2},z_{1},z_{3},-z_{2},-z_{1},-z_{3}]).\]
As a result, we have that:
{\small
\begin{alignat*}{3}
&\tilde{\sigma}_{1}(\alpha_{1})=\alpha_{1}+\alpha_{2},\quad  &&\tilde{\sigma}_{1}(\alpha_{2})=-\alpha_{2},\quad &&\tilde{\sigma}_{1}(\alpha_{1}+\alpha_{2})=\alpha_{1};\\
&\tilde{\sigma}_{1}(3\alpha_{1}+\alpha_{2})=3\alpha_{1}+2\alpha_{2},\quad
&&{\sigma}_{1}(2\alpha_{1}+\alpha_{2})=2\alpha_{1}+\alpha_{2},\quad 
&&\tilde{\sigma}_{1}(3\alpha_{1}+2\alpha_{2})=3\alpha_{1}+\alpha_{2}.
\end{alignat*}
}
So $\Delta^{+}_{\tilde{\sigma_{1}}}=\{\alpha_{1},\alpha_{1}+\alpha_{2}, 3\alpha_{1}+\alpha_{2},2\alpha_{1}+\alpha_{2},3\alpha_{1}+2\alpha_{2}\}.$\\
It thus follows that
\begin{alignat*}{3}
    \mathfrak{n}_{1}=&\mathbb{R}(X_{\alpha_{1}}+X_{\alpha_{1}+\alpha_{2}})&+&i\mathbb{R}(X_{\alpha_{1}}-X_{\alpha_{1}+\alpha_{2}})\\
            +&\mathbb{R}(X_{3\alpha_{1}+\alpha_{2}}+X_{3\alpha_{1}+2\alpha_{2}})&+&i\mathbb{R}(X_{3\alpha_{1}+\alpha_{2}}-X_{3\alpha_{1}+2\alpha_{2}})\\
            +&\mathbb{R}(X_{2\alpha_{1}+\alpha_{2}})
        \end{alignat*}
and so $m_{1}=5.$
As a consequence, $\operatorname{dim}_{\mathbb{R}}(\mathcal{O}(\gamma_{2}))=7$.
\item[$\boxed{w=w_{3}}$] $\sigma\circ w([0,ih_{1},ih_{2},ih_{3},-ih_{1},-ih_{2},-ih_{3}])=-[0,ih_{1},ih_{3},ih_{2},-ih_{1},-ih_{3},-ih_{2}].$
Therefore, we have that:
{\small
\begin{align*}
&\mathfrak{h}^{+}_{w}=\{[0,0,ih_{3},-ih_{3},0,-ih_{3},ih_{3}]\},\: &&\mathfrak{h}^{-}_{w}=\{[0,-2ih_{2},ih_{2},ih_{2},2ih_{2},-ih_{2},-ih_{2}]\};\\
&T^{+}_{w}=\{[1,1,z_{3},z_{3}^{-1},1,z_{3}^{-1},z_{3}]\},\:&& T^{-}_{w}=\{[1,z_{2}^{-2},z_{2},z_{2},z_{2}^{2},z_{2}^{-1},z_{2}^{-1}\};\\
&\hat{T}^{+}_{w}=\{I_{7},[1,1,-1,-1,1,-1,-1]\},\: &&T^{+}_{w}\cap T^{-}_{w}=\{I_{7},[1,1,-1,-1,1,-1,-1]\};\\
&[\Psi]=[\Psi_{0}]=\{[\omega(w_{3})]\},\:&& \sqrt{[\Psi_{0}]}=\{\gamma_{3}=\operatorname{exp}(i\frac{\pi}{4}(X_{\epsilon_2-\epsilon_3}+X_{\epsilon_3-\epsilon_2}))\};
\end{align*}
}\[\tilde{\sigma}_{1}(\alpha)([0,z_{1},z_{2},z_{3},-z_{1},-z_{2},-z_{3}])=\alpha([0,z_{1},z_{3},z_{2},-z_{1},-z_{3},-z_{2}]).\]
As a result, we have that:
{\small
\begin{alignat*}{3}
&\tilde{\sigma}_{1}(\alpha_{1})=-(2\alpha_{1}+\alpha_{2}),\qquad&&\tilde{\sigma}_{1}(\alpha_{2})=3\alpha_{1}+2\alpha_{2},\qquad&&\tilde{\sigma}_{1}(\alpha_{1}+\alpha_{2})=\alpha_{1}+\alpha_{2},\\
&\tilde{\sigma}_{1}(3\alpha_{1}+2\alpha_{2})=\alpha_{2},\qquad&&\tilde{\sigma}_{1}(2\alpha_{1}+\alpha_{2})=-\alpha_{1},\qquad&&\tilde{\sigma}_{1}(3\alpha_{1}+\alpha_{2})=-(3\alpha_{1}+\alpha_{2}).
\end{alignat*}
} So $\Delta^{+}_{\tilde{\sigma_{1}}}=\{\alpha_{2},\alpha_{1}+\alpha_{2},3\alpha_{1}+2\alpha_{2}\}.$\begin{align*}
     \mathfrak{n}_{1}=&\mathbb{R}(X_{\alpha_{2}}+X_{3\alpha_{1}+2\alpha_{2}})+i\mathbb{R}(X_{\alpha_{2}}-X_{3\alpha_{1}+2\alpha_{2}})\\
            +&\mathbb{R}X_{\alpha_{1}+\alpha_{2}}   
    \end{align*} and so $m_{1}=3.$
Thus, $\operatorname{dim}_{\mathbb{R}}(\mathcal{O}(\gamma_{3}))=9$.
\item[$\boxed{w=w_{4}}$]$\sigma\circ w([0,ih_{1},ih_{2},ih_{3},-ih_{1},-ih_{2},-ih_{3}])=-[0,ih_{3},ih_{2},ih_{1},-ih_{3},-ih_{2},-ih_{1}].$
{\small
    \begin{alignat*}{2}
    &\mathfrak{h}^{+}_{w}=\{[0,ih_{1},0,-ih_{1},-ih_{1},0,ih_{1}]\},\qquad &&\mathfrak{h}^{-}_{w}=\{[0,ih_{1},-2ih_{1},ih_{1},-ih_{1},2ih_{1},-ih_{1}]\};\\
    &T^{+}_{w}=\{[1,z_{1},1,z_{1}^{-1},z_{1}^{-1},1,z_{1}]\},\qquad &&T^{-}_{w}=\{[1,z_{1},z_{1}^{-2},z_{1},z_{1}^{-1},z_{1}^{2},z_{1}^{-1}\};\\
    &\hat{T}^{+}_{w}=\{I_{7},[1,-1,1,-1,-1,1,-1]\},\qquad &&
    T^{+}_{w}\cap T^{-}_{w}=\{I_{7},[1,-1,1,-1,-1,1,-1]\};\\
    &[\Psi]=[\Psi_{0}]=\{[\omega(w_{4})]\},\qquad &&\sqrt{[\Psi_{0}]}=\{\gamma_{4}=\operatorname{exp}(i\frac{\pi}{4}(X_{\epsilon_1-\epsilon_3}+X_{\epsilon_3-\epsilon_1})\}.
    \end{alignat*}
    }\[\tilde{\sigma}_{1}(\alpha)([0,z_{1},z_{2},z_{3},-z_{1},-z_{2},-z_{3}])=\alpha([0,z_{3},z_{2},z_{1},-z_{3},-z_{2},-z_{1}]).\]
As a result, we have that:
{\small
\begin{alignat*}{3}
&\tilde{\sigma}_{1}(\alpha_{1})=\alpha_{1},\quad &&\tilde{\sigma}_{1}(\alpha_{2})=-(3\alpha_{1}+\alpha_{2}),\quad&&\tilde{\sigma}_{1}(\alpha_{1}+\alpha_{2})=-(2\alpha_{1}+\alpha_{2});\\
&\tilde{\sigma}_{1}(3\alpha_{1}+\alpha_{2})=-\alpha_{2},\quad&& \tilde{\sigma}_{1}(2\alpha_{1}+\alpha_{2})=-(\alpha_{1}+\alpha_{2}),\quad &&\tilde{\sigma}_{1}(3\alpha_{1}+2\alpha_{2})=-(3\alpha_{1}+2\alpha_{2}).
\end{alignat*}
}So $\Delta^{+}_{\tilde{\sigma_{1}}}=\{\alpha_{1}\}.$
It thus follows that $\mathfrak{n}_{1}=\mathbb{R}X_{\alpha_{1}}$ and so $m_{1}=1.$
As a result, $\operatorname{dim}_{\mathbb{R}}(\mathcal{O}(\gamma_{4}))=11.$
\end{itemize}
\medskip
\item[$(3)$] Let us work in the conjugacy class of $w_{5},w_{6}$ and $w_{7}$. The calculations are similar to the previous case and will therefore be omitted.
\begin{itemize}
\item [$\boxed{w=w_{5}}$]
\[
    [\Psi]=[\Psi_{0}]=\{[\omega(w_{5})]\},\quad\sqrt{[\Psi_{0}]}=\{\gamma_{5}=\operatorname{exp}(i\frac{\pi}{4}(X_{\epsilon_3}+X_{-\epsilon_3}))\}.
    \]  
\[\Delta^{+}_{\tilde{\sigma_{1}}}=\{\alpha_{2}\}.\]
It thus follows that $\mathfrak{n}_{1}=\mathbb{R}X_{\alpha_{2}}$
and so $m_{1}=1.$
We conclude that $\operatorname{dim}_{\mathbb{R}}(\mathcal{O}(\gamma_{5}))=11.$
\item[$\boxed{w=w_{6}}$]
\[[\Psi]=[\Psi_{0}]=\{[\omega(w_{6})]\},\qquad\sqrt{[\Psi_{0}]}=\{\gamma_{6}=\operatorname{exp}(i\frac{\pi}{4}(X_{\epsilon_1}+X_{-\epsilon_1}))\}.\]
\begin{align*}
&\Delta^{+}_{\tilde{\sigma_{1}}}=\{\alpha_{1},2\alpha_{1}+\alpha_{2},3\alpha_{1}+\alpha_{2}\}.\\
    &\mathfrak{n}_{1}=\mathbb{R}(X_{\alpha_{1}}+X_{2\alpha_{1}+\alpha_{2}})+i\mathbb{R}(X_{\alpha_{1}}-X_{2\alpha_{1}+\alpha_{2}})
            +\mathbb{R}X_{3\alpha_{1}+\alpha_{2}}    
\end{align*}
and so $m_{1}=3.$ Thus, $\operatorname{dim}_{\mathbb{R}}(\mathcal{O}(\gamma_{6}))=9.$
\item [$\boxed{w=w_{7}}$]
         \[[\Psi]=[\Psi_{0}]=\{[\omega(w_{7})]\},\qquad\sqrt{[\Psi_{0}]}=\{\gamma_{7}=\operatorname{exp}(i\frac{\pi}{4}(X_{\epsilon_2}+M_{-\epsilon_2}))\}.\]
\[\Delta^{+}_{\tilde{\sigma_{1}}}=\{\alpha_{2},3\alpha_{1}+\alpha_{2},\alpha_{1}+\alpha_{2},2\alpha_{1}+\alpha_{2},3\alpha_{1}+2\alpha_{2}\}.\]\begin{alignat*}{2}
    \mathfrak{n}_{1}=&\mathbb{R}(X_{\alpha_{2}}+X_{3\alpha_{1}+\alpha_{2}})+&&i\mathbb{R}(X_{\alpha_{2}}-X_{3\alpha_{1}+\alpha_{2}})\\
            +&\mathbb{R}(X_{\alpha_{1}+\alpha_{2}}+X_{2\alpha_{1}+\alpha_{2}})+&&i\mathbb{R}(X_{\alpha_{1}+\alpha_{2}}-X_{2\alpha_{1}+\alpha_{2}})\\
            +&\mathbb{R}(X_{3\alpha_{1}+2\alpha_{2}})
        \end{alignat*}
and so $m_{1}=5.$ This means that
$\operatorname{dim}_{\mathbb{R}}(\mathcal{O}(\gamma_{7}))=7.$
\end{itemize}
\item[$(4)$] Finally, we examine the case $w=w_{8}:$
$$\sigma\circ w([0,ih_{1},ih_{2},ih_{3},-ih_{1},-ih_{2},-ih_{3}])=[0,ih_{1},ih_{2},ih_{3},-ih_{1},-ih_{2},-ih_{3}].$$
    Therefore, we have that:
    {\small
    \begin{alignat*}{2}
    &\mathfrak{h}^{+}_{w}=\mathfrak{h},\quad&&  \mathfrak{h}^{-}_{w}=\{0 \};\\
    &T^{+}_{w}=\{[1,z_{1},z_{2},(z_{1}z_{2})^{-1},z_{1}^{-1},z_{2}^{-1},z_{1}z_{2}]\},\quad&&T^{-}_{w}=\{I_{7}\}.
    \end{alignat*}}
    As a consequence, we obtain that:
    {\small
    \begin{align*}
    & \hat{T}^{+}_{w}=\{I_{7},[1,1,-1,-1,1,-1,-1],    [1,-1,1,-1,-1,1,-1],[1,-1,-1,1,-1,-1,1]\}.\\
    &T^{+}_{w}\cap T^{-}_{w}=\{I_{7}\}.\\
    &[\Psi]=\{[\omega(w_{8})],[M_{1}=\omega(w_{8})[1,1,-1,-1,1,-1,-1]],
    [M_{2}=\omega(w_{8})[1,-1,1,-1,-1,1,-1]],\\&[M_{3}=\omega(w_{8})[1,-1,-1,1,-1,-1,1]]\}. 
    \end{align*}}Unlike the previous cases, we have that $[\Psi_{0}]$ is a proper subset of $[\Psi].$ Indeed,
   $$\begin{aligned}
M_1
&=\exp\left(Y_1=\frac{i\pi}{2}M(E_{12}+E_{21},-e_3,e_3)\right)
=\begin{pmatrix}
-1&0&0\\
0&0&\begin{pmatrix}
-1&0&0\\
0&1&0\\
0&0&1
\end{pmatrix}\\
0&\begin{pmatrix}
-1&0&0\\
0&1&0\\
0&0&1
\end{pmatrix}&0
\end{pmatrix},\\
M_2
&=\exp\left(Y_2=\frac{i\pi}{2}M(E_{23}+E_{32},-e_1,e_1)\right)
=\begin{pmatrix}
-1&0&0\\
0&0&\begin{pmatrix}
1&0&0\\
0&-1&0\\
0&0&1
\end{pmatrix}\\
0&\begin{pmatrix}
1&0&0\\
0&-1&0\\
0&0&1
\end{pmatrix}&0
\end{pmatrix},\\
M_3
&=\exp\left(Y_3=\frac{i\pi}{2}M(E_{13}+E_{31},-e_2,e_2)\right)
=\begin{pmatrix}
-1&0&0\\
0&0&\begin{pmatrix}
1&0&0\\
0&1&0\\
0&0&-1
\end{pmatrix}\\
0&\begin{pmatrix}
1&0&0\\
0&1&0\\
0&0&-1
\end{pmatrix}&0
\end{pmatrix}.
\end{aligned}
$$
            However, the matrix $A=\omega(w_{8})$ doesn't belong to 
            $\Theta_{0}$ and thus $A\notin\Psi_{0}.$ Indeed, suppose by 
            contradiction that there exists a matrix $Y\in \mathfrak{p}_{0}$ 
            such that $\operatorname{exp}(iY)=A.$ Clearly $iY$ commutes 
            with its exponential and so we have that $AY=YA.$
This yields $AYA^{-1}=Y.$
However, it is easily computed that for any matrix $Y\in\mathfrak{p}_{0}$, $AYA^{-1}=-Y.$
As a result, we would get that $Y=-Y$,
which would mean $Y=0$, thus leading to a contradiction. In conclusion, we have that:
\[\sqrt{[\Psi_{0}]}=\left\{\left[\gamma_{8}=\operatorname{exp}\left(\frac{Y_{1}}{2}\right)\right],\left[\gamma_{9}=\operatorname{exp}\left(\frac{Y_{2}}{2}\right)\right],\left[\gamma_{10}=\operatorname{exp}\left(\frac{Y_{3}}{2}\right)\right]\right\}.\]
We, thus, have a total of $3$ orbits associated with this element of the Weyl group.\\
As for the dimensions, we remark that for every $i=1,2,3$, 
$\tilde{\sigma_{1}}=\operatorname{Ad}(M_{i})$ acts on roots as $\tilde{\sigma}_{1}(\alpha)([0,z_{1},z_{2},z_{3},-z_{1},-z_{2},-z_{3}])=-\alpha([0,z_{1},z_{2},z_{3},-z_{1},-z_{2},-z_{3}]).$\\
So, we get that $\Delta^{+}_{\tilde{\sigma_{1}}}=\emptyset,$ whence
$\mathfrak{n}_{1}=0$ and $m_{1}=0.$ This proves that, for $i=8,9,10$, $\operatorname{dim}_{\mathbb{R}}(\mathcal{O}(\gamma_{i}))=12$, revealing that these orbits are open.
\end{itemize}
From the above discussion and Remark \ref{rem: from right to left}, we obtain the following
\begin{lemma}\label{thm:orbit-count-complete-flag}
The action of $G_{2(2)}$ on $G_2/B$ has exactly ten orbits,
consisting of one closed orbit of real dimension $6$,
two orbits of each real dimension $7$, $9$, and $11$,
and three open orbits of real dimension $12$.
\end{lemma}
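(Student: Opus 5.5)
The plan is to read the lemma off from Theorem~\ref{teo: Aomoto, teorema finitezza orbite sul flag completo} combined with Remark~\ref{rem: from right to left}, using the case-by-case computations just carried out. By Aomoto's theorem, the double cosets $B\backslash G_2/G_{2(2)}$ are in canonical bijection with $\sqrt{[\Psi_0]}$. By Remark~\ref{rem: from right to left}, the same elements $\gamma_j$ represent the left $G_{2(2)}$-orbits on $G_2/B$, and corresponding orbits have equal real dimension. So it suffices to do two things: determine $\sqrt{[\Psi_0]}$ completely, and evaluate $\dim\mathcal O(\gamma)=|\Delta|-m_1=12-m_1$ for each representative.

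First, I will justify that the list of Weyl group elements used above is exhaustive. Since $\tilde\sigma=-1$ on $\mathfrak h_u$ commutes with all of $W$, the condition $w\in W_{\mathrm{inv}}$ reduces to $w^2=1$. The Weyl group of $G_2$ is dihedral of order $12$, so its involutions are:
\begin{itemize}
\item the identity $w_1$;
\item the six reflections, namely three reflections in short roots ($w_2,w_3,w_4$, realized inside $SL_3$) and three reflections in long roots ($w_5,w_6,w_7$);
\item the central element $-1=w_8$.
\end{itemize}
This gives eight elements. Here I take $\Psi$ to run over all $w\in W_{\mathrm{inv}}$, not merely over conjugacy classes: the computation above shows that distinct reflections give distinct double cosets with different dimensions.

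Second, for each $w$ I will compute $\hat T^+_w$ modulo $T^+_w\cap T^-_w$ and test membership in $\Theta_0=\exp(i\mathfrak p_0)$.
\begin{itemize}
\item For $w_1$, $T^+_w$ is trivial, giving one class.
\item For each reflection, $T^+_w$ is one-dimensional and $\hat T^+_w=T^+_w\cap T^-_w$, so each reflection contributes exactly one class. That class lies in $\Psi_0$ because $\omega(w)=\exp(iX)$ with $X\in\mathfrak p_0$.
\item For $w_8$, $\hat T^+_w$ consists of the four elements of order dividing $2$ in $T$, while $T^+_w\cap T^-_w$ is trivial, giving four classes. I will exhibit $M_1,M_2,M_3$ as $\exp(Y_i)$ with $Y_i\in i\mathfrak p_0$. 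To exclude $A=\omega(w_8)$, note that $\operatorname{Ad}(A)=-1$ on $\mathfrak p_0$. If $A=\exp(iY)$ with $Y\in\mathfrak p_0$, then $Y$ would commute with $A$, forcing $Y=-Y$, so $Y=0$ and $A=I_7$, which is absurd.
\end{itemize}
This yields $1+6+3=10$ orbits in total.

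Third, the dimensions come from computing the action of $\tilde\sigma_1=\operatorname{Ad}(\omega a)\circ\tilde\sigma$ on $\Delta_+$, which acts on roots as $w$ composed with $-1$ and hence as $-w$, and then counting $m_1=\#\Delta^{\tilde\sigma_1}_+$.
\begin{itemize}
\item $w_1$ gives $m_1=6$, so dimension $6$. By the Wolf theorem this is the unique closed orbit.
\item The reflections in $\alpha_1+\alpha_2$ ($w_2$) and in $2\alpha_1+\alpha_2$ ($w_7$) give $m_1=5$, so dimension $7$.
\item $w_3$ and $w_6$ give $m_1=3$, so dimension $9$.
\item $w_4$ and $w_5$, the reflections in the roots orthogonal to the simple roots $\alpha_1$ and $\alpha_2$, give $m_1=1$, so dimension $11$.
\item $w_8$ gives $\Delta_+^{\tilde\sigma_1}=\emptyset$, so $m_1=0$ and the three orbits are open of dimension $12$.
\end{itemize}
A conceptual cross-check is that $m_1=\#\{\alpha>0: -w\alpha>0\}$ equals the number of positive roots sent to negative roots by $w$, that is, the length $\ell(w)$ counted appropriately. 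The reflections in $W(G_2)$ have lengths $1,1,3,3,5,5$, consistent with $12-m_1\in\{11,9,7\}$ in pairs.

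The main obstacle is the $w_8$ step: correctly identifying $\hat T^+_w$, and showing that exactly three of its four classes lie in $\Theta_0$ while $A$ does not. I handle this with the explicit exponentials $Y_i$ and the anti-commutation argument $\operatorname{Ad}(A)|_{\mathfrak p_0}=-1$ given above.
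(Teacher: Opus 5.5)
Your orbit count is correct and follows the paper's own argument: Aomoto's theorem, Remark~\ref{rem: from right to left}, one class for each of the six reflections, four classes for $w_8$ with $\omega(w_8)$ excluded because $\operatorname{Ad}(A)|_{\mathfrak p_0}=-1$, giving $1+6+3=10$.

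The dimension step, however, is justified by a false rule. You claim that $\tilde\sigma_1=\operatorname{Ad}(\omega a)\circ\tilde\sigma$ acts on $\Delta$ as $-w$. This confuses the action of $\tilde\sigma$ on $\mathfrak h_u$, which is $-1$, with its induced action on roots. For the split form, $\tilde\sigma$ is complex conjugation and every root space is spanned by a real matrix, so $\tilde\sigma(\mathfrak g_\alpha)=\mathfrak g_\alpha$. Equivalently, roots are imaginary on $\mathfrak h_u$, so $\alpha\mapsto\overline{\alpha\circ\tilde\sigma}$ is the identity. Hence $\tilde\sigma_1$ acts on $\Delta$ as $w$, and $m_1=\#\{\alpha>0: w\alpha>0\}=6-\ell(w)$, so $\dim\mathcal O=6+\ell(w)$.

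Applied consistently, your rule fails badly. For $w_1$ it gives $\Delta^{\tilde\sigma_1}_+=\emptyset$, so an open orbit. For each of the three $w_8$-classes it gives $m_1=6$, so three $6$-dimensional orbits, contradicting the uniqueness of the closed orbit and the lemma. The values you list ($m_1=6$ for $w_1$, $m_1=0$ for $w_8$) are the right ones, but they contradict your own rule. Your cross-check $m_1=\ell(w)$ has the same sign error: it would send the length-one reflections to dimension $11$ instead of $7$.

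Your root labels are also wrong. The $\mathfrak{sl}_3$ roots $\epsilon_i-\epsilon_j$ are the long roots: $\alpha_2=\epsilon_1-\epsilon_2$, $3\alpha_1+\alpha_2=\epsilon_2-\epsilon_3$, $3\alpha_1+2\alpha_2=\epsilon_1-\epsilon_3$. The roots $\pm\epsilon_i$ are the short ones. So, as the paper's computation of $\tilde\sigma_1(\alpha_1)$ and $\tilde\sigma_1(\alpha_2)$ shows:
$w_2=s_{\alpha_2}$, $w_3=s_{3\alpha_1+\alpha_2}$, $w_4=s_{3\alpha_1+2\alpha_2}$, $w_5=s_{2\alpha_1+\alpha_2}$, $w_6=s_{\alpha_1+\alpha_2}$, $w_7=s_{\alpha_1}$.
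In particular, the reflections in $\alpha_1+\alpha_2$ and $2\alpha_1+\alpha_2$ have lengths $3$ and $5$, not $m_1=5$; the $7$-dimensional orbits come from the simple reflections.

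To make the step self-contained, compute the isotropy directly.
\begin{enumerate}
\item Since $\gamma\in\Theta_0$ gives $\tilde\sigma(\gamma)=\gamma^{-1}$, and $\tilde\sigma(\mathfrak b)=\mathfrak b$, the complexified isotropy algebra at $\gamma B$ is $\operatorname{Ad}(\gamma)\mathfrak b\cap\operatorname{Ad}(\gamma^{-1})\mathfrak b=\operatorname{Ad}(\gamma)\bigl(\mathfrak b\cap\operatorname{Ad}(\gamma^{-2})\mathfrak b\bigr)$.
\item With $\gamma^2=\omega a$ and $a\in T$, this is conjugate to $\mathfrak h\oplus\bigoplus_{\alpha>0,\,w\alpha>0}\mathfrak g_\alpha$, of complex dimension $8-\ell(w)$.
\item Hence the orbit has real dimension $14-(8-\ell(w))=6+\ell(w)$.
\end{enumerate}
The lengths $0$; $1,1,3,3,5,5$; $6$ then give exactly the dimensions $6$; $7,7,9,9,11,11$; $12$ stated in the lemma.
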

\begin{remark}\label{rem:G22-Aomoto-count}
The explicit list of representatives can be checked intrinsically using
Aomoto's orbit-counting formula \cite[Theorem~3]{Aomoto1966}. If
$\mathfrak h^1,\ldots,\mathfrak h^4$ represent the
$G_{2(2)}$-conjugacy classes of Cartan subalgebras of
$\mathfrak g_{2(2)}$, then
$\#(B\backslash G_2/G_{2(2)})=\sum_{j=1}^{4}|W/W^j|$, where
$W^j$ is the subgroup associated with
$\mathfrak h^j$. In the present case there are four Cartan classes. Indeed, by the
classification via Cayley transforms along mutually strongly
orthogonal roots \cite[Theorem~7]{Sugiura1959} (see also
\cite{Sugiura1971}), they are represented by the split Cartan
subalgebra, the two intermediate Cartan subalgebras obtained by
Cayley transforms along $\alpha_1$ and $\alpha_2$, and a fundamental
Cartan subalgebra, which may be obtained from the strongly orthogonal
pair $\{\alpha_1+\alpha_2,3\alpha_1+\alpha_2\}$. The corresponding subgroups are
$W^1=W(G_2)$,
$W^2=\langle s_{\alpha_1},s_{3\alpha_1+2\alpha_2}\rangle$,
$W^3=\langle s_{\alpha_2},s_{2\alpha_1+\alpha_2}\rangle$, and
$W^4=\langle s_{\alpha_1+\alpha_2},s_{3\alpha_1+\alpha_2}\rangle$.
Thus $|W^1|=12$ and $|W^2|=|W^3|=|W^4|=4$. Since
$|W(G_2)|=12$, Aomoto's formula gives
$\#(B\backslash G_2/G_{2(2)})=10$. This agrees with the ten
representatives $\gamma_1,\ldots,\gamma_{10}$ obtained above and
shows that the list is complete.
\end{remark}

\subsection{Equations for the $G_{2(2)}$-orbits on $G_{2}/B$}
\label{sec: equation and closure relations for G2/B}
Let us characterize each orbit of the $G_{2(2)}$-action on $G_{2}/B.$ First of all, we will use the embedding described in \ref{sec:Flag manifolds and $G_{0}$-orbits } to obtain a more geometric description of the space $G_{2}/B$.  
Recall that each element $gB$ can be identified with the pair $([g\cdot v],[g\cdot M]).$ We remark that $\operatorname{rk}(M)=2$. As a result, we may interpret $[gM]=[gMg^{-1}]$ as the plane $\pi$ spanned by the columns of the matrix $gMg^{-1}$.  Remark that up to scalar multiples, the columns of $M$ are $e_{2}$ and $e_{7}$. Let $\pi_0$ be the plane spanned by these vectors. The plane associated with $gMg^{-1}$ is spanned by $g e_{2}$ and $g e_{7}$ and can be expressed as $g\pi_0$. Note that the map sending $[g\cdot M]$ to its corresponding plane $g\pi_0$ is injective. Indeed, suppose that $g\cdot \pi_0=g'\cdot \pi_0$. This means that $h=g^{-1}g'\in\operatorname{Stab}(\pi_0)$. Clearly, $Q_{\alpha_2}\subset \operatorname{Stab}(\pi_0)$ and so $\operatorname{Stab}(\pi_0)$ is a parabolic subgroup of $G_{2}$. By the maximality of $Q_{\alpha_{2}}$ and the fact that $\operatorname{Stab}(\pi_0)$ is strictly contained in $G_2$, we get that  $Q_{\alpha_2}=\operatorname{Stab}(\pi_0)$. This means that $g Q_{\alpha_2}=g'Q_{\alpha_2}$ and so, by the injectivity of the embedding $i_2$ in \ref{sec:Flag manifolds and $G_{0}$-orbits }, we get that $[g\cdot M]=[g'\cdot M]$. We further remark that the vector $g e_{7}$, which is the first component of the pair $([g\cdot v],[g\cdot M])$ lies in the plane $\pi$ determined by the second component. This allows us to embed $G_{2}/B$ in the space of flags $F_{(1,2)}(\mathbb{C}^{7}){=}\{(V_{1},V_{2})|\ V_{1}\subset V_2\subset \mathbb{C}^{7},\ \operatorname{dim}_{\mathbb{C}}(V_{j})=j,\ j=1,2\}$. We can actually obtain a more accurate description of $G_{2}/B$ by considering some $G_{2}$-specific objects. Let $n:\ \mathbb{C}^{7}\ \rightarrow\ \mathbb{C}$ be the quadratic form on $\mathbb{C}^{7}$
whose associated matrix with respect to the standard basis is
\begin{equation}\label{N_matrix}
    N=\begin{pmatrix}
    -1&0&0\\
    0&0&-2I_{3}\\
    0&-2I_{3}&0
\end{pmatrix}.
\end{equation}
In \cite[Section~3.1]{DRAPERFONTANALS201823} the author proves that $\mathfrak{g}_{2}$ is a skew-symmetric set of endomorphisms with respect to this quadratic form via its natural action on $\mathbb{C}^{7}.$ As a consequence, $G_{2}$ acts by isometries of $n$.
A quick computation shows that $n(v)=0$. As a result, for every $g\in G_{2}$ we have that $n(g\cdot v)=0$. This proves that for every element $([v],\pi)\in G_{2}/B$, $[v]$ lies in the complex projective quadric $\mathcal{Q}=\{[w]\in\mathbb{P}(\mathbb{C}^{7})\ |\ n(w)=0\ \}.$\\
We can also give a better characterization of the planes $\pi$ via the the natural cross product $\times$, defined as follows:
\begin{equation}\label{eq:G2-cross-product}
\times:\mathbb C^7\times\mathbb C^7\longrightarrow\mathbb C^7,
\qquad
\begin{pmatrix}
s\\ u\\ v
\end{pmatrix}
\times
\begin{pmatrix}
t\\ x\\ y
\end{pmatrix}
=
\begin{pmatrix}
2u^ty-2v^tx\\
sx-tu-2v\times y\\
-sy+tv+2u\times x
\end{pmatrix}.
\end{equation}
One can show that $\mathfrak{g}_{2}$ acts as derivations of $(\mathbb{C}^{7},\times)$ and so $G_{2}$ consists of automorphisms of $\mathbb{C}^{7}$ preserving $\times$. It is easy to check that the the plane $\pi_{0}=<e_{2},e_{7}>$ corresponding to $M$ is $\times-$isotropic, i.e. for every $u,v\in \pi_{0}$, $u\times v=0$. This continues to hold for every plane $g\pi_{0}$ with $g\in G_{2}$, proving that the planes $\pi$ in the pair $([v],\pi)\in G_{2}/B$ lie in the space of all $\times-$isotropic complex planes in $\mathbb{C}^{7}.$ As a consequence, $G_{2}/B$ can be interpreted as the flags $([v],\pi)$, where $[v]$ is an $n$-isotropic line contained in the $\times$-isotropic plane $\pi$. \par
In the spirit of the classical approach to real-form orbits on complex flag manifolds, see J.~A.~Wolf \cite[Introduction]{Wolf1969}, one may distinguish the orbits by 
means of algebraic invariants associated with the real structure. In the classical setting, 
these invariants are often given by the signatures and radicals of invariant Hermitian forms 
restricted to the subspaces defining the flag (a closely related construction appears in 
\cite[Section~6]{Fels2007}, where a first example of a $3$-nondegenerate $\mathrm{SO}(3,4)^\circ$-
orbit in the Grassmannian of isotropic $2$-planes is described in terms of a symmetric 
bilinear form and its associated Hermitian form). In the present exceptional setting, the 
invariant Hermitian form does not by itself distinguish all the $G_{2(2)}$-orbits and must 
be supplemented by invariants arising from the $G_2$-invariant cross product.
That is why we consider the  sesquilinear form $q$ such that for every 
$u,w\in\mathbb{C}^{7}$, $q(u,w)=\bar{u}^{t}Nw$. Since $G_{2(2)}\subset G_{2}$ 
consists of real matrices, $q$ is  clearly $G_{2(2)}-$invariant. Another 
essential real invariant can be built from the cross product $\times.$ Set 
$\omega(v)=\bar{v}\times v$. The vanishing or not of $\omega(v)$ is a 
$G_{2(2)}-$projective invariant for $[v]$. Indeed, suppose $\omega(v)\neq 0$. 
Then, for every $\lambda\in\mathbb{C}-\{0\}$, we have that 
$\overline{\lambda\ v}\times \lambda v=|\lambda|^{2}\overline{v}\times v\neq 
0.$  Besides, if $g\in G_{2(2)}$, then $\overline{gv}\times gv=g\overline{v}\times g v=g(\overline{v}\times v)\neq 0$, because $g$ is invertible. An analogous argument shows that $\omega(v)=0$ is a $G_{2(2)}-$invariant for $[v].$ One final invariant we will use to distinguish between $G_{2(2)}$-orbits is the reality of both the isotropic lines $[v]$ and the planes $\pi$. More formally, a line $[v]$ is real if $\bar{v}\wedge v=0$, i.e. if $\bar{v}$ and $v$ are linearly dependent over $\mathbb{C}$. Likewise, a plane $\pi$ is real if $\bar{\pi}=\pi$. Because $G_{2(2)}$ commutes with conjugation, it is clear that these conditions are $G_{2(2)}$-invariants.\par
\par The main idea will be to write $G_{2}/B$ as the disjoint union of $10$ $G_{2(2)}-$invariant subsets, each containing exactly one of the orbits $\mathcal O(\gamma_j)$. The ten subsets defined below are pairwise disjoint, as one checks from their defining conditions. Those with $\operatorname{sign}(q_{|\pi})=(1,1)$, and those with $\operatorname{sign}(q_{|\pi})=(0,1)$, differ in the sign of $q([v])$. The subsets with $q_{|\pi}=0$ are separated by $\omega(v)\neq0$ and, for $\omega(v)=0$, by the reality of $\pi$ and of $[v]$. Since the ten orbits cover $G_2/B$ (Lemma~\ref{thm:orbit-count-complete-flag}), each of these subsets, being disjoint from the orbits contained in the other nine, coincides with the orbit it contains.
\begin{itemize}
    \item [$(\gamma_{1})$] 
    \[\gamma_{1}B=\left(\left[\begin{pmatrix}
        0\\
        0\\
        0\\
        0\\
        0\\
        0\\
        1\\
    \end{pmatrix}\right],\left[\begin{pmatrix}
         0 & 0 & 0 & 0 & 0 & 0 & 0 \\
0 & 0 & 0 & 1 & 0 & 0 & 0 \\
0 & 0 & 0 & 0 & 0 & 0 & 0 \\
0 & 0 & 0 & 0 & 0 & 0 & 0 \\
0 & 0 & 0 & 0 & 0 & 0 & 0 \\
0 & 0 & 0 & 0 & 0 & 0 & 0 \\
0 & 0 & 0 & 0 & -1 & 0 & 0
    \end{pmatrix}\right]\right).\]
    We remark that the plane $\pi$ associated with the matrix is real and $q$-isotropic. Besides, $\bar{v}\wedge v=0$. Therefore, $\mathcal{O}(\gamma_{1})$ lies entirely in the $G_{2(2)}-$invariant subset $\mathcal{O}_{6}$, where
    \[\mathcal{O}_{6}=\{([v],\pi)|\ [v]\subset \pi,\ q_{|\pi}=0,\ \pi=\bar{\pi},\ \bar{v}\wedge v=0\}.\]
    \item[$(\gamma_{2})$]  
    \[\gamma_{2}B=\left(\left[\begin{pmatrix}
        0\\
        0\\
        0\\
        0\\
        0\\
        0\\
        1\\
    \end{pmatrix}\right],\left[\begin{pmatrix}
          0 & 0 & 0 & 0 & 0 & 0 & 0 \\
0 & 0 & 0 & \frac{1}{2}\sqrt{2} & 0 & 0 & 0 \\
0 & 0 & 0 & \frac{1}{2}i\sqrt{2} & 0 & 0 & 0 \\
0 & 0 & 0 & 0 & 0 & 0 & 0 \\
0 & 0 & 0 & 0 & 0 & 0 & 0 \\
0 & 0 & 0 & 0 & 0 & 0 & 0 \\
0 & 0 & 0 & 0 & -\frac{1}{2}\sqrt{2} & -\frac{1}{2}i\sqrt{2} & 0
    \end{pmatrix}\right]\right).\]
    We remark that the plane $\pi$ associated with the matrix is $q$-isotropic, but is not real. Besides, $\bar{v}\wedge v=0$. Therefore, $\mathcal{O}(\gamma_{2})$ lies entirely in the $G_{2(2)}-$invariant subset $\mathcal{O}_{7}$, where
    \[\mathcal{O}_{7}=\{([v],\pi)|\ [v]\subset \pi,\ q_{|\pi}=0,\ \pi\neq \bar{\pi},\ \bar{v}\wedge v=0\}.\]
        \item[$(\gamma_{3})$] 
    \[\gamma_{3}B=\left(\left[\begin{pmatrix}
        0\\
        0\\
        0\\
        0\\
        0\\
        -i\\
        1\\
    \end{pmatrix}\right],\left[\begin{pmatrix}
          0 & 0 & 0 & 0 & 0 & 0 & 0 \\
0 & 0 & -\frac{1}{2} i \sqrt{2} & \frac{1}{2} \sqrt{2} & 0 & 0 & 0 \\
0 & 0 & 0 & 0 & 0 & 0 & 0 \\
0 & 0 & 0 & 0 & 0 & 0 & 0 \\
0 & 0 & 0 & 0 & 0 & 0 & 0 \\
0 & 0 & 0 & 0 & \frac{1}{2} i \sqrt{2} & 0 & 0 \\
0 & 0 & 0 & 0 & -\frac{1}{2} \sqrt{2} & 0 & 0
    \end{pmatrix}\right]\right).\]
     We remark that the plane $\pi$ associated with the matrix is $q$-isotropic.
    Furthermore, $\bar{v}\times v\neq 0$. Hence $\pi\neq\bar\pi$: if $\pi=\bar\pi$, then
    $\bar v\in\pi$, and $\bar v\times v=0$ because $\pi$ is $\times$-isotropic. Therefore,
    $\mathcal{O}(\gamma_{3})$ lies entirely in the $G_{2(2)}-$invariant subset
    $\mathcal{O}_{9}^{(0,0)}$, where
    \[\mathcal{O}^{(0,0)}_{9}=\{([v],\pi)|\ [v]\subset \pi,\ q_{|\pi}=0,\   \omega(v)\neq 0\}.\]
    \item[$(\gamma_{4})$] 
        \[\gamma_{4}B=\left(\left[\begin{pmatrix}
        0\\
        0\\
        0\\
        0\\
        -i\\
        0\\
        1\\
    \end{pmatrix}\right],\left[\begin{pmatrix}
         0 & 0 & 0 & 0 & 0 & 0 & 0 \\
0 & -\frac{1}{2}i & 0 & \frac{1}{2} & 0 & 0 & 0 \\
0 & 0 & 0 & 0 & 0 & 0 & 0 \\
0 & \frac{1}{2} & 0 & \frac{1}{2}i & 0 & 0 & 0 \\
0 & 0 & 0 & 0 & \frac{1}{2}i & 0 & -\frac{1}{2} \\
0 & 0 & 0 & 0 & 0 & 0 & 0 \\
0 & 0 & 0 & 0 & -\frac{1}{2} & 0 & -\frac{1}{2}i
    \end{pmatrix}\right]\right).\]
    We remark that the hermitian form $q_{|\pi}$ has signature $(1,1)$ and that $q([v])=0$. Moreover, $\bar{v}\times v\neq 0$. Therefore, $\mathcal{O}(\gamma_{4})$ lies entirely in the $G_{2(2)}-$invariant subset $\mathcal{O}^{(+,-)}_{11}$, where
    \[\mathcal{O}^{(+,-)}_{11}=\{([v],\pi)|\ [v]\subset \pi,\operatorname{sign}(q_{|\pi})=(1,1),\ q([v])=0,\omega(v)\neq 0\}.\]
        \item[$(\gamma_{5})$] 
        \[\gamma_{5}B=\left(\left[\begin{pmatrix}
        -i\\
        0\\
        0\\
        \frac{1}{2}\\
        0\\
        0\\
        \frac{1}{2}\\
    \end{pmatrix}\right],\left[\begin{pmatrix}
        0 & 0 & -\frac{1}{2}\sqrt{2} & 0 & \frac{1}{2}i\sqrt{2} & 0 & 0 \\
-\frac{1}{4}i\sqrt{2} & 0 & 0 & \frac{1}{4}\sqrt{2} & 0 & 0 & \frac{1}{4}\sqrt{2} \\
0 & 0 & 0 & 0 & 0 & 0 & 0 \\
0 & 0 & -\frac{1}{4}i\sqrt{2} & 0 & -\frac{1}{4}\sqrt{2} & 0 & 0 \\
0 & 0 & 0 & 0 & 0 & 0 & 0 \\
\frac{1}{4}\sqrt{2} & 0 & 0 & \frac{1}{4}i\sqrt{2} & 0 & 0 & \frac{1}{4}i\sqrt{2} \\
0 & 0 & -\frac{1}{4}i\sqrt{2} & 0 & -\frac{1}{4}\sqrt{2} & 0 & 0
    \end{pmatrix}\right]\right).\]
    We remark that the hermitian form $q_{|\pi}$ has signature $(0,1)$ and that $q([v])<0$.  Therefore, $\mathcal{O}(\gamma_{5})$ lies entirely in the $G_{2(2)}-$invariant subset $\mathcal{O}^{(0,-)}_{11}$, where
       \[\mathcal{O}^{(0,-)}_{11}=\{([v],\pi)|\ [v]\subset \pi,\ \operatorname{sign}(q_{|\pi})=(0,1),\ q([v])<0\}.\]
            \item[$(\gamma_{6})$] 
        \[\gamma_{6}B=\left(\left[\begin{pmatrix}
        0\\
        0\\
        i\\
        0\\
        0\\
        0\\
        1\\
    \end{pmatrix}\right],\left[\begin{pmatrix}
        0 & 0 & 0 & \frac{1}{2}i\sqrt{2} & 0 & -\frac{1}{2}\sqrt{2} & 0 \\
0 & 0 & 0 & \frac{1}{4}\sqrt{2} & 0 & \frac{1}{4}i\sqrt{2} & 0 \\
\frac{1}{4}\sqrt{2} & -\frac{1}{4}i\sqrt{2} & 0 & 0 & -\frac{1}{4}i\sqrt{2} & 0 & 0 \\
0 & 0 & 0 & 0 & 0 & 0 & 0 \\
0 & 0 & 0 & \frac{1}{4}\sqrt{2} & 0 & \frac{1}{4}i\sqrt{2} & 0 \\
0 & 0 & 0 & 0 & 0 & 0 & 0 \\
-\frac{1}{4}i\sqrt{2} & -\frac{1}{4}\sqrt{2} & 0 & 0 & -\frac{1}{4}\sqrt{2} & 0 & 0
    \end{pmatrix}\right]\right).\]
    We remark that the hermitian form $q_{|\pi}$ has signature $(0,1)$, $q([v])=0$ and $\bar{v}\times v=0$. Therefore, $\mathcal{O}(\gamma_{6})$ lies entirely in the $G_{2(2)}-$invariant subset $\mathcal{O}^{(0,-)}_{9}$, where
        \[\mathcal{O}^{(0,-)}_{9}=\{([v],\pi)|\ [v]\subset \pi,\ \operatorname{sign}(q_{|\pi})=(0,1),\ q([v])=0\}.\]
    \item[$(\gamma_{7})$] 
            \[\gamma_{7}B=\left(\left[\begin{pmatrix}
        0\\
        -i\\
        0\\
        0\\
        0\\
        0\\
        1\\
    \end{pmatrix}\right],\left[\begin{pmatrix}
        0 & 0 & 0 & 0 & 0 & 0 & 0 \\
0 & 0 & 0 & 1 & 0 & 0 & 0 \\
0 & 0 & 0 & 0 & 0 & 0 & 0 \\
0 & 0 & 0 & 0 & 0 & 0 & 0 \\
0 & 0 & 0 & 0 & 0 & 0 & 0 \\
0 & 0 & 0 & 0 & 0 & 0 & 0 \\
0 & 0 & 0 & 0 & -1 & 0 & 0
    \end{pmatrix}\right]\right).\]
    We remark that $q_{|\pi}=0$ and $\pi=\bar{\pi}$, but $\bar{v}\wedge v\neq 0$.  Therefore, $\mathcal{O}(\gamma_{7})$ lies entirely in the $G_{2(2)}-$invariant subset $\mathcal{O}^{'}_{7}$, where
    \[\mathcal{O}^{'}_{7}=\{([v],\pi)|\ [v]\subset \pi,\ q_{|\pi}= 0,\ \pi=\bar{\pi},\ \bar{v}\wedge v\neq 0\}.\]
        \item[$(\gamma_{8})$] 
            \[\gamma_{8}B=\left(\left[\begin{pmatrix}
        2i\\
        0\\
        0\\
        1\\
        0\\
        0\\
        1\\
    \end{pmatrix}\right],\left[\begin{pmatrix}
        0 & \frac{1}{2}i & -\frac{1}{2} & 0 & -\frac{1}{2}i & \frac{1}{2} & 0 \\
\frac{1}{4}i & 0 & 0 & \frac{1}{4} & 0 & 0 & \frac{1}{4} \\
-\frac{1}{4} & 0 & 0 & \frac{1}{4}i & 0 & 0 & \frac{1}{4}i \\
0 & \frac{1}{4} & \frac{1}{4}i & 0 & -\frac{1}{4} & -\frac{1}{4}i & 0 \\
-\frac{1}{4}i & 0 & 0 & -\frac{1}{4} & 0 & 0 & -\frac{1}{4} \\
\frac{1}{4} & 0 & 0 & -\frac{1}{4}i & 0 & 0 & -\frac{1}{4}i \\
0 & \frac{1}{4} & \frac{1}{4}i & 0 & -\frac{1}{4} & -\frac{1}{4}i & 0
    \end{pmatrix}\right]\right).\]
    We remark that $q_{|\pi}$ has signature $(1,1)$ and $q([v])<0$. Therefore, $\mathcal{O}(\gamma_{8})$ lies entirely in the $G_{2(2)}-$invariant subset $\mathcal{O}^{(+,-)}_{-}$, where
    \[\mathcal{O}^{(+,-)}_{-}=\{([v],\pi)|\ [v]\subset \pi,\ \operatorname{sign}(q)=(1,1),\ q([v])<0\}.\]
            \item[$(\gamma_{9})$] 
            \[\gamma_{9}B=\left(\left[\begin{pmatrix}
        0\\
        0\\
        -i\\
        1\\
        0\\
        -i\\
        1\\
    \end{pmatrix}\right],\left[\begin{pmatrix}
        0 & 0 & -\frac{1}{2} & -\frac{1}{2}i & 0 & -\frac{1}{2} & -\frac{1}{2}i \\
0 & 0 & -\frac{1}{4}i & \frac{1}{4} & 0 & -\frac{1}{4}i & \frac{1}{4} \\
\frac{1}{4} & \frac{1}{4}i & 0 & 0 & \frac{1}{4}i & 0 & 0 \\
\frac{1}{4}i & -\frac{1}{4} & 0 & 0 & -\frac{1}{4} & 0 & 0 \\
0 & 0 & -\frac{1}{4}i & \frac{1}{4} & 0 & -\frac{1}{4}i & \frac{1}{4} \\
\frac{1}{4} & \frac{1}{4}i & 0 & 0 & \frac{1}{4}i & 0 & 0 \\
\frac{1}{4}i & -\frac{1}{4} & 0 & 0 & -\frac{1}{4} & 0 & 0
    \end{pmatrix}\right]\right).\]
            We remark that $q_{|\pi}$ has signature $(0,2)$. Therefore, $\mathcal{O}(\gamma_{9})$ lies entirely in the $G_{2(2)}-$invariant subset $\mathcal{O}^{(-,-)}$, where
       \[\mathcal{O}^{(-,-)}=\{([v],\pi)|\ [v]\subset \pi,\ \operatorname{sign}(q_{|\pi})=(0,2)\}.\]
                \item[$(\gamma_{10})$] 
            \[\gamma_{10}B=\left(\left[\begin{pmatrix}
        0\\
        i\\
        0\\
        -1\\
        -i\\
        0\\
        1\\
    \end{pmatrix}\right],\left[\begin{pmatrix}
        0 & 0 & 0 & 0 & 0 & 0 & 0 \\
0 & -\frac{1}{2}i & 0 & \frac{1}{2} & 0 & 0 & 0 \\
0 & 0 & 0 & 0 & 0 & 0 & 0 \\
0 & \frac{1}{2} & 0 & \frac{1}{2}i & 0 & 0 & 0 \\
0 & 0 & 0 & 0 & \frac{1}{2}i & 0 & -\frac{1}{2} \\
0 & 0 & 0 & 0 & 0 & 0 & 0 \\
0 & 0 & 0 & 0 & -\frac{1}{2} & 0 & -\frac{1}{2}i
    \end{pmatrix}\right]\right).\]
    We remark that $q_{|\pi}$ has signature $(1,1)$ and that $q([v])>0$. Therefore, $\mathcal{O}(\gamma_{10})$ lies entirely in the $G_{2(2)}-$invariant subset $\mathcal{O}^{(+,-)}_{+}$, where
    \[\mathcal{O}^{(+,-)}_{+}=\{([v],\pi)|\ [v]\subset \pi,\ \operatorname{sign}(q_{|\pi})=(1,1),\ q([v])>0\}.\]
\end{itemize}
Let us summarize our findings in the following:
\begin{theorem}
    $G_{2(2)}$ acts on $G_{2}/B$ with $10$ orbits. Their dimensions, defining equations and topological features are illustrated in the following table:
    \end{theorem}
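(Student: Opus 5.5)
The theorem collects three things: the orbit count, the dimensions, and the defining equations. I would prove them in that order and reuse what is already in place.

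\emph{Count and dimensions.} The count of ten orbits, together with their real dimensions $6,7,7,9,9,11,11,12,12,12$, is exactly Lemma~\ref{thm:orbit-count-complete-flag}. That lemma comes from Aomoto's Theorem~\ref{teo: Aomoto, teorema finitezza orbite sul flag completo}, transferred from $B\backslash G$ to $G/B$ through the anti-equivariant bijection of Remark~\ref{rem: from right to left}. Completeness of the list $\gamma_1,\dots,\gamma_{10}$ is confirmed independently by the Cartan-class count in Remark~\ref{rem:G22-Aomoto-count}. So the first column of the table needs no new argument. The topological features (closed, open, or neither) also follow from the dimensions. Wolf's theorem gives a unique closed orbit, which must be the $6$-dimensional $\mathcal O(\gamma_1)$. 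The $12$-dimensional orbits $\mathcal O(\gamma_8),\mathcal O(\gamma_9),\mathcal O(\gamma_{10})$ are open because $\dim_{\mathbb R}G_2/B=12$.

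\emph{Defining equations.} For each $j$ I need $\mathcal O(\gamma_j)$ to equal the corresponding $G_{2(2)}$-invariant set $\mathcal O_6,\mathcal O_7,\dots,\mathcal O^{(+,-)}_+$. I would proceed in three steps.
\begin{enumerate}
\item \emph{Invariance.} Each defining condition is $G_{2(2)}$-invariant. This covers the signature of $q_{|\pi}$, the sign or vanishing of $q([v])$, the vanishing of $\omega(v)$, and the reality of $[v]$ and of $\pi$. The reason is that $G_{2(2)}$ consists of real matrices preserving $N$ and $\times$, and commutes with conjugation.
\item \emph{Membership.} The explicit representative $\gamma_jB=([\gamma_j v],\gamma_j\pi_0)$ satisfies the conditions of its set. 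This is a finite computation: compute $\gamma_j e_7$ and $\gamma_j e_2$, the Gram matrix of $q$ on $\pi$, the value $q(v)$, and $\bar v\times v$. It has already been recorded case by case above.
\item \emph{Pairwise disjointness.} The ten sets are mutually disjoint. The signature of $q_{|\pi}$ on a $2$-plane splits them into coarse classes: $q_{|\pi}=0$, $(0,1)$, $(1,1)$, and $(0,2)$. The cases $(1,0)$ and $(2,0)$ need to be excluded or shown empty, for instance by checking that no orbit representative realises them.
\begin{itemize}
\item Within $(1,1)$, the sign of $q(v)$ separates the three sets. At $q(v)=0$ one adds $\omega(v)\ne0$, and I would check that $q(v)=0$ with $\omega(v)=0$ does not occur there.
\item Within $(0,1)$, the sets are separated by $q(v)<0$ versus $q(v)=0$.
\item Within $q_{|\pi}=0$, the sets are separated by $\omega(v)\neq0$, and then by the reality of $\pi$ and of $[v]$. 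Here $\pi=\bar\pi$ forces $\omega(v)=0$, since $\pi$ is $\times$-isotropic.
\end{itemize}
\end{enumerate}

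Given these three steps, the conclusion is formal. Every point of $G_2/B$ lies in one of the ten orbits. Each orbit $\mathcal O(\gamma_j)$ is contained in its set $S_j$ by invariance and membership. By disjointness, $S_j$ meets no other orbit, so $S_j=\mathcal O(\gamma_j)$. In particular the conditions cut out exactly the orbit, and the sets cover $G_2/B$.

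\emph{Main obstacle.} The delicate point is disjointness within the $q$-isotropic and $(1,1)$ strata. There one must make sure that the listed conditions are exhaustive enough that no two sets overlap. A cleaner alternative would be to refine each condition so that the ten sets visibly partition $G_2/B$. Even without that, the covering argument only needs disjointness, so I would verify each pair directly from the conditions. For any pair whose conditions do not obviously exclude each other, I would use that the sets are unions of orbits and compare dimensions to rule out the overlap.
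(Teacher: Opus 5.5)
Your proposal is correct and follows essentially the same route as the paper. The count and the dimensions come from the Aomoto computation. The defining conditions are then identified by checking that each representative $\gamma_jB$ lies in its invariant set and that the ten sets are pairwise disjoint, so that the covering by the ten orbits forces equality.

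The extra checks you list are unnecessary, as is the fallback dimension comparison. These checks are the exclusion of signatures $(1,0)$ and $(2,0)$, and of the case $\operatorname{sign}(q_{|\pi})=(1,1)$, $q([v])=0$, $\omega(v)=0$. Disjointness follows directly from the conditions once you note that both $\pi=\bar\pi$ and $\bar v\wedge v=0$ force $\omega(v)=0$. The emptiness of those extra strata is then a consequence of the covering argument.
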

\begin{table}[h]
\centering
\small
\renewcommand{\arraystretch}{1.15}
\begin{tabular}{c|c|l|c}
$G_{2(2)}$-orbit & $\dim_{\mathbb R}$ & defining conditions & topology\\
\hline
$\mathcal{O}^{(+,-)}_{+}$ & $12$ & $\operatorname{sign}(q_{|\pi})=(1,1)$, $q([v])>0$ & open\\
$\mathcal{O}^{(+,-)}_{-}$ & $12$ & $\operatorname{sign}(q_{|\pi})=(1,1)$, $q([v])<0$ & open\\
$\mathcal{O}^{(-,-)}$ & $12$ & $\operatorname{sign}(q_{|\pi})=(0,2)$ & open\\
$\mathcal{O}^{(+,-)}_{11}$ & $11$ & $\operatorname{sign}(q_{|\pi})=(1,1)$, $q([v])=0$, $\omega(v)\neq0$ & intermediate\\
$\mathcal{O}^{(0,-)}_{11}$ & $11$ & $\operatorname{sign}(q_{|\pi})=(0,1)$, $q([v])<0$ & intermediate\\
$\mathcal{O}^{(0,-)}_{9}$ & $9$ & $\operatorname{sign}(q_{|\pi})=(0,1)$, $q([v])=0$ & intermediate\\
$\mathcal{O}^{(0,0)}_{9}$ & $9$ & $q_{|\pi}=0$, $\omega(v)\neq0$ & intermediate\\
$\mathcal{O}_{7}$ & $7$ & $q_{|\pi}=0$, $\pi\neq\bar\pi$, $\bar v\wedge v=0$ & intermediate\\
$\mathcal{O}^{'}_{7}$ & $7$ & $q_{|\pi}=0$, $\pi=\bar\pi$, $\bar v\wedge v\neq0$ & intermediate\\
$\mathcal{O}_{6}$ & $6$ & $q_{|\pi}=0$, $\pi=\bar\pi$, $\bar v\wedge v=0$ & closed
\end{tabular}
\bigskip
\caption{$G_{2(2)}$-orbits on $G_2/B$.}
\label{tab:example}
\end{table}
\subsection{Projecting the orbits of $G_{2}/B$ to the generalized flag $G_{2}/Q_{\Pi_0}$ }\label{subsec:projecting-orbits}
For $i=1,2$, let us consider the natural $G_{2(2)}-$equivariant projection map with fiber $\mathbb{CP}^1$
\begin{align*}
\pi_i:\ G_2/B &\longrightarrow G_2/Q_{\alpha_i}\\
gB &\longmapsto gQ_{\alpha_i}.
\end{align*}
Because this map is equivariant, it maps $G_{2(2)}-$orbits of $G_{2}/B$ to $G_{2(2)}-$orbits of $G_{2}/Q_{\alpha_{i}}.$ Knowing how the orbits on the complete flag map down to the partial flags, of real dimension $10$, will allow us to completely reconstruct the orbit structure on the partial flags. Indeed, suppose we have an orbit $G_{2(2)}\cdot gQ_{\alpha_{i}}\subset G_{2}/Q_{\alpha_{i}}$. Clearly, $gQ_{\alpha_{i}}=\pi_{i}(gB)$ and, by the $G_{2(2)}$-equivariance of $\pi_{i}$, we have that $\pi_{i}(G_{2(2)}\cdot gB)=G_{2(2)}\cdot gQ_{\alpha_{i}}$. This shows that every $G_{2(2)}-$orbit on the generalized flag $G_{2}/Q_{\alpha_{i}}$ must be a projection of a $G_{2(2)}-$orbit on the complete flag. Therefore, to compute the number of $G_{2(2)}$-orbits on the partial flag, it is enough to find which orbits $\mathcal{O}(\gamma_{i})$ project to distinct orbits on $G_{2}/Q_{\alpha_{i}}$.\par
On the complete flag manifold, we have identified $10$-distinct orbits $\mathcal{O}(\gamma_{i})=G_{2(2)}\cdot\gamma_{i}B$ for $i=1,\dots,10,$ where the explicit form of $\gamma_{i}$ is indicated in section \ref{sec:explicit-G2-B}. 
\begin{remark}\label{rem:computing-dimension-of-projections}
In addition to projecting each orbit and identifying which ones collapse into the same orbit on the partial flags, we compute the dimension of each projected orbit. This can be done by the following algorithmic procedure, which can be implemented in a symbolic computation software such as SageMath.
\begin{itemize}
    \item [$(1)$] Let $M$ be the $G_{2(2)}$-orbit through $x=\gamma Q_{\alpha_{j}}$. Its isotropy subgroup is $I_{x}=G_{2(2)}\cap \gamma Q_{\alpha_j}\gamma^{-1}$, so that $M\cong G_{2(2)}/I_{x}$ and $\dim_{\mathbb{R}}M=14-\dim_{\mathbb{R}}(\mathfrak{g}_{2(2)}\cap \gamma \mathfrak{q}_{\alpha_{j}}\gamma^{-1}).$
    \item[$(2)$] Since the explicit expressions of $\gamma$ and of $\mathfrak{q}_{\alpha_j}$ are known, see \eqref{eq: parabolics}, we compute the form of a generic matrix of $\gamma \mathfrak{q}_{\alpha_{j}}\gamma^{-1}$, whose entries are functions of the complex parameters $x_k$, $y$, $a_{kj}$ of \eqref{eq: parabolics}, and of $z$ in the case of $\mathfrak q_{\alpha_2}$.
    \item[$(3)$] Since $\mathfrak g_{2(2)}$ consists of the real matrices of $\mathfrak g_2$, the intersection $\mathfrak{g}_{2(2)}\cap \gamma \mathfrak{q}_{\alpha_{j}}\gamma^{-1}$ is obtained by imposing that all entries of the matrices of Step $(2)$ are real. This is a linear system in the real and imaginary parts of the complex parameters, which can be solved with SageMath. If $f$ is the number of free parameters of its solution, then $\dim_{\mathbb R}M=14-f$, and writing the complex parameters in terms of the free parameters gives the form of a generic matrix of the isotropy algebra.
\end{itemize}
\end{remark}
We start with $G_{2}/Q_{\alpha_{1}}$.
\begin{itemize}
    \item[$\boxed{\mathcal{O}(\gamma_{1})}$] 
    We know that $\gamma_{1}=I_{7}.$ As a consequence, $\pi_1(\gamma_{1}B)=\gamma_{1}Q_{\alpha_{1}}=\left[\begin{pmatrix}
        0\\
        0\\
        e_{3}\\
    \end{pmatrix}\right].$
    Because $\pi_{1}$ is continuous, $\pi_1(\mathcal{O}(\gamma_{1}))$ is the only compact $G_{2(2)}-$orbit of $G_{2}/Q_{\alpha_{1}}$ with real dimension $5$. 
    \item[$\boxed{\mathcal{O}(\gamma_{2})}$]
    \[\pi_{1}(\gamma_{2}B)=\pi_{1}\left(
\begin{pmatrix}
1 & 0 & 0 & 0 & 0 & 0 & 0 \\
0 & \frac{1}{2}\sqrt{2} & \frac{1}{2}i\sqrt{2} & 0 & 0 & 0 & 0 \\
0 & \frac{1}{2}i\sqrt{2} & \frac{1}{2}\sqrt{2} & 0 & 0 & 0 & 0 \\
0 & 0 & 0 & 1 & 0 & 0 & 0 \\
0 & 0 & 0 & 0 & \frac{1}{2}\sqrt{2} & -\frac{1}{2}i\sqrt{2} & 0 \\
0 & 0 & 0 & 0 & -\frac{1}{2}i\sqrt{2} & \frac{1}{2}\sqrt{2} & 0 \\
0 & 0 & 0 & 0 & 0 & 0 & 1
    \end{pmatrix}B\right)=\left[\begin{pmatrix}
        0\\
        0\\
        e_{3}
    \end{pmatrix}\right].\]
    This means that $G_{2(2)}\cdot\gamma_{2}Q_{\alpha_{1}}$ coincides with $\mathcal{O}(\gamma_{1}).$
    \item[$\boxed{\mathcal{O}(\gamma_{3})}$] 
    We have that \[\pi_{1}(\gamma_{3}B)=\pi_{1}\left(
\begin{pmatrix}
1 & 0 & 0 & 0 & 0 & 0 & 0 \\
0 & 1 & 0 & 0 & 0 & 0 & 0 \\
0 & 0 & \frac{1}{2} \sqrt{2} & \frac{1}{2} i \sqrt{2} & 0 & 0 & 0 \\
0 & 0 & \frac{1}{2} i \sqrt{2} & \frac{1}{2} \sqrt{2} & 0 & 0 & 0 \\
0 & 0 & 0 & 0 & 1 & 0 & 0 \\
0 & 0 & 0 & 0 & 0 & \frac{1}{2} \sqrt{2} & -\frac{1}{2} i \sqrt{2} \\
0 & 0 & 0 & 0 & 0 & -\frac{1}{2} i \sqrt{2} & \frac{1}{2} \sqrt{2}
    \end{pmatrix}B\right)=\left[\begin{pmatrix}
        0\\
        0\\
        0\\
        0\\
        0\\
        -i\\
        1\\
    \end{pmatrix}\right].\]
   Following Remark \ref{rem:computing-dimension-of-projections}, to find the dimension of this orbit, we compute that
   \[\mathfrak{g}_{2(2)}\cap\gamma_{3}\mathfrak{q}_{\alpha_1}\gamma_3^{-1}=\left\{\begin{pmatrix}
   0 & 0 & 0 & 0 & -2 \, r_{1} & 0 & 0 \\
r_{1} & r_{4} & 0 & \sqrt{2} r_{5} & 0 & 0 & 0 \\
0 & 0 & -\frac{1}{2} \, r_{4} & r_{2} & 0 & 0 & 0 \\
0 & 0 & -r_{2} & -\frac{1}{2} \, r_{4} & 0 & 0 & 0 \\
0 & 0 & 0 & 0 & -r_{4} & 0 & 0 \\
0 & 0 & 0 & -r_{1} & 0 & \frac{1}{2} \, r_{4} & r_{2} \\
0 & 0 & r_{1} & 0 & -\sqrt{2} r_{5} & -r_{2} & \frac{1}{2} \, r_{4}\end{pmatrix}|\ r_{j}\in\mathbb{R}\right\}.\]
This shows that $f=5$. As a result, this orbit has dimension $14-f=9$.
\item[$\boxed{\mathcal{O}(\gamma_{4})}$]
    \[\pi_{1}(\gamma_{4}B)=\pi_{1}\left(
\begin{pmatrix}
1 & 0 & 0 & 0 & 0 & 0 & 0 \\
0 & \frac{1}{2}\sqrt{2} & 0 & \frac{1}{2}i\sqrt{2} & 0 & 0 & 0 \\
0 & 0 & 1 & 0 & 0 & 0 & 0 \\
0 & \frac{1}{2}i\sqrt{2} & 0 & \frac{1}{2}\sqrt{2} & 0 & 0 & 0 \\
0 & 0 & 0 & 0 & \frac{1}{2}\sqrt{2} & 0 & -\frac{1}{2}i\sqrt{2} \\
0 & 0 & 0 & 0 & 0 & 1 & 0 \\
0 & 0 & 0 & 0 & -\frac{1}{2}i\sqrt{2} & 0 & \frac{1}{2}\sqrt{2}
    \end{pmatrix}B\right)=\left[\begin{pmatrix}
        0\\
        0\\
        0\\
        0\\
        -i\\
        0\\
        1\\
    \end{pmatrix}\right].\]
     Its dimension is $9$. We will now show that this orbit is equal to the orbit through $\gamma_{3}Q_{\alpha_{1}}.$ Consider the element $g\in G_{2(2)}$ given by
     \[g=\operatorname{exp}\left(M(\frac{\pi}{2}(E_{21}-E_{12}),0,0)\right)=\left(\begin{array}{rrrrrrr}
1 & 0 & 0 & 0 & 0 & 0 & 0 \\
0 & 0 & -1 & 0 & 0 & 0 & 0 \\
0 & 1 & 0 & 0 & 0 & 0 & 0 \\
0 & 0 & 0 & 1 & 0 & 0 & 0 \\
0 & 0 & 0 & 0 & 0 & -1 & 0 \\
0 & 0 & 0 & 0 & 1 & 0 & 0 \\
0 & 0 & 0 & 0 & 0 & 0 & 1
\end{array}\right).\]
     Applying $g$ to $[\left(0,\ 0,\ 0,\ 0,\ -i,\ 0,\ 1\right)^{t}]$ yields $[\left(0,\ 0,\ 0,\ 0,\ 0,\ -i,\ 1\right)^{t}]$,
     which is the representative of the orbit through $\gamma_{3}Q_{\alpha_{1}}.$ We thus conclude that the orbits are equal.
     \item[$\boxed{\mathcal{O}(\gamma_{5})}$]
\[\pi_{1}(\gamma_{5}B)=\pi_{1}\left(
\begin{pmatrix}
0 & 0 & 0 & i & 0 & 0 & -i \\
0 & \frac{1}{2}\sqrt{2} & 0 & 0 & 0 & \frac{1}{2}i\sqrt{2} & 0 \\
0 & 0 & \frac{1}{2}\sqrt{2} & 0 & -\frac{1}{2}i\sqrt{2} & 0 & 0 \\
\frac{1}{2}i & 0 & 0 & \frac{1}{2} & 0 & 0 & \frac{1}{2} \\
0 & 0 & -\frac{1}{2}i\sqrt{2} & 0 & \frac{1}{2}\sqrt{2} & 0 & 0 \\
0 & \frac{1}{2}i\sqrt{2} & 0 & 0 & 0 & \frac{1}{2}\sqrt{2} & 0 \\
-\frac{1}{2}i & 0 & 0 & \frac{1}{2} & 0 & 0 & \frac{1}{2}
    \end{pmatrix}B\right)=\left[\begin{pmatrix}
        -i\\
        0\\
        0\\
        \frac{1}{2}\\
        0\\
        0\\
        \frac{1}{2}\\
    \end{pmatrix}\right].\]
    Its dimension is $10$. This means that $\mathcal{O}(\gamma_{5})$ is an open orbit of $G_{2}/Q_{\alpha_{1}}.$
         \item[$\boxed{\mathcal{O}(\gamma_{6})}$]
\[\pi_{1}(\gamma_{6}B)=\pi_{1}\left(
\begin{pmatrix}
0 & i & 0 & 0 & -i & 0 & 0 \\
\frac{1}{2}i & \frac{1}{2} & 0 & 0 & \frac{1}{2} & 0 & 0 \\
0 & 0 & \frac{1}{2}\sqrt{2} & 0 & 0 & 0 & \frac{1}{2}i\sqrt{2} \\
0 & 0 & 0 & \frac{1}{2}\sqrt{2} & 0 & -\frac{1}{2}i\sqrt{2} & 0 \\
-\frac{1}{2}i & \frac{1}{2} & 0 & 0 & \frac{1}{2} & 0 & 0 \\
0 & 0 & 0 & -\frac{1}{2}i\sqrt{2} & 0 & \frac{1}{2}\sqrt{2} & 0 \\
0 & 0 & \frac{1}{2}i\sqrt{2} & 0 & 0 & 0 & \frac{1}{2}\sqrt{2}
    \end{pmatrix}B\right)=\left[\begin{pmatrix}
        0\\
        0\\
        i\\
        0\\
        0\\
        0\\
        1\\
    \end{pmatrix}\right].\]
    Its dimension is $7$. 
\item[$\boxed{\mathcal{O}(\gamma_{7})}$] 
\[\pi_{1}(\gamma_{7}B)=\pi_{1}\left(
\begin{pmatrix}
0 & 0 & i & 0 & 0 & -i & 0 \\
0 & \frac{1}{2}\sqrt{2} & 0 & 0 & 0 & 0 & -\frac{1}{2}i\sqrt{2} \\
\frac{1}{2}i & 0 & \frac{1}{2} & 0 & 0 & \frac{1}{2} & 0 \\
0 & 0 & 0 & \frac{1}{2}\sqrt{2} & \frac{1}{2}i\sqrt{2} & 0 & 0 \\
0 & 0 & 0 & \frac{1}{2}i\sqrt{2} & \frac{1}{2}\sqrt{2} & 0 & 0 \\
-\frac{1}{2}i & 0 & \frac{1}{2} & 0 & 0 & \frac{1}{2} & 0 \\
0 & -\frac{1}{2}i\sqrt{2} & 0 & 0 & 0 & 0 & \frac{1}{2}\sqrt{2}
    \end{pmatrix}B\right)=\left[\begin{pmatrix}
        0\\
        -i\\
        0\\
        0\\
        0\\
        0\\
        1\\
    \end{pmatrix}\right].\]
    Its dimension is $7$. We now prove that this orbit is the same as the orbit through $\gamma_{6}Q_{\alpha_{1}}.$ Indeed, consider the element $h$ of $G_{2(2)}$ given by
    
    \begin{equation}\label{eq:alpha1-conjugating-element}
h=g^{-1}=
\begin{pmatrix}
1&0&0&0&0&0&0\\
0&0&1&0&0&0&0\\
0&-1&0&0&0&0&0\\
0&0&0&1&0&0&0\\
0&0&0&0&0&1&0\\
0&0&0&0&-1&0&0\\
0&0&0&0&0&0&1
\end{pmatrix}.
\end{equation}
    
    Applying this, we obtain that $h\cdot [\left(0,\ -i,\ 0,\ 0,\ 0,\ 0,\ 1 \right)^{t}]=[\left(0,\ 0,\ i,\ 0,\ 0,\ 0,\ 1\right)^{t}]$,
    which is the same base point as the $G_{2(2)}-$orbit through $\gamma_{6}Q_{\alpha_{1}}.$ This shows that the two orbits are equal.
        \item[$\boxed{\mathcal{O}(\gamma_{8})}$]
\[\pi_{1}(\gamma_{8}B)=\pi_{1}\left(
\begin{pmatrix}
0 & 0 & 0 & -i & 0 & 0 & i \\
0 & \frac{1}{2} & \frac{1}{2}i & 0 & -\frac{1}{2} & -\frac{1}{2}i & 0 \\
0 & \frac{1}{2}i & \frac{1}{2} & 0 & \frac{1}{2}i & \frac{1}{2} & 0 \\
-\frac{1}{2}i & 0 & 0 & \frac{1}{2} & 0 & 0 & \frac{1}{2} \\
0 & -\frac{1}{2} & \frac{1}{2}i & 0 & \frac{1}{2} & -\frac{1}{2}i & 0 \\
0 & -\frac{1}{2}i & \frac{1}{2} & 0 & -\frac{1}{2}i & \frac{1}{2} & 0 \\
\frac{1}{2}i & 0 & 0 & \frac{1}{2} & 0 & 0 & \frac{1}{2}
    \end{pmatrix}B\right)=\left[\begin{pmatrix}
        2i\\
        0\\
        0\\
        1\\
        0\\
        0\\
        1\\
    \end{pmatrix}\right].\]
 It has dimension $10$ and it coincides with the orbit though the point $\gamma_{5}Q_{\alpha_{1}}.$ Indeed, consider the element 
 \[k=\operatorname{exp}\left(M(\frac{\pi}{2}(E_{31}-E_{13}),\frac{\pi}{2}e_2,\frac{\pi}{2}e_2)\right)=
     \begin{pmatrix}
 -1 & 0 &  0 & 0 & 0 &  0 & 0 \\
  0 & 0 &  0 & 0 & 1 &  0 & 0 \\
  0 & 0 &  0 & 0 & 0 & -1 & 0 \\
  0 & 0 &  0 & 0 & 0 &  0 & 1 \\
  0 & 1 &  0 & 0 & 0 &  0 & 0 \\
  0 & 0 & -1 & 0 & 0 &  0 & 0 \\
  0 & 0 &  0 & 1 & 0 &  0 & 0 
\end{pmatrix}.\]
It is easy to check that $k\cdot [(-2i,\ 0,\ 0,\ 1,\ 0,\ 0,\ 1)^{t}]=[(2i,\ 0,\ 0,\ 1,\ 0,\ 0,\ 1)^{t}].$ This means that $G_{2(2)}\cdot \gamma_{5}Q_{\alpha_{1}}=G_{2(2)}\cdot\gamma_{8}Q_{\alpha_{1}}.$
    \item[$\boxed{\mathcal{O}(\gamma_{9})}$] 
 \[\pi_{1}(\gamma_{9}B)=\pi_{1}\left(
\begin{pmatrix}
0 & -i & 0 & 0 & i & 0 & 0 \\
-\frac{1}{2}i & \frac{1}{2} & 0 & 0 & \frac{1}{2} & 0 & 0 \\
0 & 0 & \frac{1}{2} & \frac{1}{2}i & 0 & -\frac{1}{2} & -\frac{1}{2}i \\
0 & 0 & \frac{1}{2}i & \frac{1}{2} & 0 & \frac{1}{2}i & \frac{1}{2} \\
\frac{1}{2}i & \frac{1}{2} & 0 & 0 & \frac{1}{2} & 0 & 0 \\
0 & 0 & -\frac{1}{2} & \frac{1}{2}i & 0 & \frac{1}{2} & -\frac{1}{2}i \\
0 & 0 & -\frac{1}{2}i & \frac{1}{2} & 0 & -\frac{1}{2}i & \frac{1}{2}
    \end{pmatrix}B\right)=\left[\begin{pmatrix}
        0\\
        0\\
        -i\\
        1\\
        0\\
        -i\\
        1\\
    \end{pmatrix}\right].\]
    It has dimension $10$ and it coincides with the orbit though the point $\gamma_{5}Q_{\alpha_{1}}$. It is enough to consider the element $m$ of $G_{2(2)}$ given by
    \[m=\operatorname{exp}\left(M(\frac{\pi}{4}(E_{31}-E_{13}),-\frac{\pi}{4}e_2,-\frac{\pi}{4}e_2)\right)
=\left(\begin{array}{rrrrrrr}
0 & 0 & 1 & 0 & 0 & 1 & 0 \\
0 & \frac{1}{2} & 0 & \frac{1}{2} & \frac{1}{2} & 0 & -\frac{1}{2} \\
-\frac{1}{2} & 0 & \frac{1}{2} & 0 & 0 & -\frac{1}{2} & 0 \\
0 & -\frac{1}{2} & 0 & \frac{1}{2} & \frac{1}{2} & 0 & \frac{1}{2} \\
0 & \frac{1}{2} & 0 & -\frac{1}{2} & \frac{1}{2} & 0 & \frac{1}{2} \\
-\frac{1}{2} & 0 & -\frac{1}{2} & 0 & 0 & \frac{1}{2} & 0 \\
0 & \frac{1}{2} & 0 & \frac{1}{2} & -\frac{1}{2} & 0 & \frac{1}{2}
\end{array}\right).\]
We have that $m\cdot[(2i,\ 0,\ 0,\ 1,\ 0,\ 0,\ 1)^{t}]=[(0,\ 0,\ -i,\ 1,\ 0,\ -i,\ 1)^{t}]$. 
This shows that the two orbits are equal.
        \item[$\boxed{\mathcal{O}(\gamma_{10})}$]
\[\pi_{1}(\gamma_{10}B)=\pi_{1}\left(
\begin{pmatrix}
0 & 0 & -i & 0 & 0 & i & 0 \\
0 & \frac{1}{2} & 0 & \frac{1}{2}i & \frac{1}{2} & 0 & \frac{1}{2}i \\
-\frac{1}{2}i & 0 & \frac{1}{2} & 0 & 0 & \frac{1}{2} & 0 \\
0 & \frac{1}{2}i & 0 & \frac{1}{2} & -\frac{1}{2}i & 0 & -\frac{1}{2} \\
0 & \frac{1}{2} & 0 & -\frac{1}{2}i & \frac{1}{2} & 0 & -\frac{1}{2}i \\
\frac{1}{2}i & 0 & \frac{1}{2} & 0 & 0 & \frac{1}{2} & 0 \\
0 & \frac{1}{2}i & 0 & -\frac{1}{2} & -\frac{1}{2}i & 0 & \frac{1}{2}
    \end{pmatrix}B\right)=\left[\begin{pmatrix}
        0\\
        i\\
        0\\
        -1\\
        -i\\
        0\\
        1\\
    \end{pmatrix}\right].\]
    It has dimension $10$ and is open. Besides, it is clear from our discussion in Section~\ref{sec: equation and closure relations for G2/B} that this orbit cannot be equivalent to $G_{2(2)}\cdot \gamma_{5}Q_{\alpha_{1}}$ as $q(\gamma_{5}Q_{\alpha_{1}})<0$, while $q(\gamma_{10}Q_{\alpha_{1}})>0.$
\end{itemize}
\par
We now move to the case $G_{2}/Q_{\alpha_{2}}$.
\begin{itemize}
    \item[$\boxed{\mathcal{O}(\gamma_{1})}$]
     \[\pi(\gamma_{1}B)=\gamma_{1}Q_{\alpha_{2}}=\left[\begin{pmatrix}
       0 & 0 & 0 & 0 & 0 & 0 & 0 \\
0 & 0 & 0 & 1 & 0 & 0 & 0 \\
0 & 0 & 0 & 0 & 0 & 0 & 0 \\
0 & 0 & 0 & 0 & 0 & 0 & 0 \\
0 & 0 & 0 & 0 & 0 & 0 & 0 \\
0 & 0 & 0 & 0 & 0 & 0 & 0 \\
0 & 0 & 0 & 0 & -1 & 0 & 0
    \end{pmatrix}\right].\]
    Because $\pi_{1}$ is continuous, $\pi(\mathcal{O}(\gamma_{1}))$ is the only compact $G_{2(2)}-$orbit of $G_{2}/Q_{\alpha_{2}}$ with real dimension $5$. 
    \item[$\boxed{\mathcal{O}(\gamma_{2})}$]
\[\pi_{2}(\gamma_{2}B)=\left[
    \begin{pmatrix}
      0 & 0 & 0 & 0 & 0 & 0 & 0 \\
0 & 0 & 0 & \frac{1}{2}\sqrt{2} & 0 & 0 & 0 \\
0 & 0 & 0 & \frac{1}{2}i\sqrt{2} & 0 & 0 & 0 \\
0 & 0 & 0 & 0 & 0 & 0 & 0 \\
0 & 0 & 0 & 0 & 0 & 0 & 0 \\
0 & 0 & 0 & 0 & 0 & 0 & 0 \\
0 & 0 & 0 & 0 & -\frac{1}{2}\sqrt{2} & -\frac{1}{2}i\sqrt{2} & 0
    \end{pmatrix}\right].\]
    The dimension of the orbit $G_{2(2)}\cdot \gamma_{2}Q_{\alpha_{2}}$ is $7$.
    \item[$\boxed{\mathcal{O}(\gamma_{3})}$] 
   \[\pi_{2}(\gamma_{3}B)=\left[\begin{pmatrix}
       0 & 0 & 0 & 0 & 0 & 0 & 0 \\
0 & 0 & -\frac{1}{2} i \sqrt{2} & \frac{1}{2} \sqrt{2} & 0 & 0 & 0 \\
0 & 0 & 0 & 0 & 0 & 0 & 0 \\
0 & 0 & 0 & 0 & 0 & 0 & 0 \\
0 & 0 & 0 & 0 & 0 & 0 & 0 \\
0 & 0 & 0 & 0 & \frac{1}{2} i \sqrt{2} & 0 & 0 \\
0 & 0 & 0 & 0 & -\frac{1}{2} \sqrt{2} & 0 & 0
    \end{pmatrix}\right].\]
   The orbit $G_{2(2)}\cdot \gamma_{3}Q_{\alpha_{2}}$ is $7$-dimensional. We will now prove that this orbit is the same as the $G_{2(2)}$- orbit through the point $\gamma_{2}Q_{\alpha_{2}}.$ Indeed, consider the element $g$ of $G_{2(2)}$ given by
   \begin{equation}\label{eq:alpha2-conjugating-element}
       g=\operatorname{exp}\left(M(0,-\frac{\pi}{2}e_{2},-\frac{\pi}{2}e_{2})\right)=\begin{pmatrix}
       -1 & 0 & 0 & 0 & 0 & 0 & 0 \\
0 & 0 & 0 & 0 & 0 & 0 & -1 \\
0 & 0 & 0 & 0 & 0 & -1 & 0 \\
0 & 0 & 0 & 0 & 1 & 0 & 0 \\
0 & 0 & 0 & -1 & 0 & 0 & 0 \\
0 & 0 & -1 & 0 & 0 & 0 & 0 \\
0 & 1 & 0 & 0 & 0 & 0 & 0
   \end{pmatrix}.\end{equation}

The plane associated with $\gamma_2Q_{\alpha_2}$ is
\begin{equation}\label{eq:Xalpha2-gamma2-representative}
\pi_{\gamma_2}
=
\operatorname{span}_{\mathbb C}
\bigl\{
(0,1,i,0,0,0,0)^t,
(0,0,0,0,0,0,-1)^t
\bigr\}.
\end{equation}
    We can thus conclude that $g\cdot \gamma_{2}Q_{\alpha_{2}}=\gamma_{3}Q_{\alpha_{2}}$ and so the two orbits coincide.
       \item[$\boxed{\mathcal{O}(\gamma_{4})}$] 
    \[\pi_{2}(\gamma_{4}B)=\left[\begin{pmatrix}
0 & 0 & 0 & 0 & 0 & 0 & 0 \\
0 & -\frac{1}{2}i & 0 & \frac{1}{2} & 0 & 0 & 0 \\
0 & 0 & 0 & 0 & 0 & 0 & 0 \\
0 & \frac{1}{2} & 0 & \frac{1}{2}i & 0 & 0 & 0 \\
0 & 0 & 0 & 0 & \frac{1}{2}i & 0 & -\frac{1}{2} \\
0 & 0 & 0 & 0 & 0 & 0 & 0 \\
0 & 0 & 0 & 0 & -\frac{1}{2} & 0 & -\frac{1}{2}i
\end{pmatrix}\right].\]
  It is $10$-dimensional and thus open. 
   \item[$\boxed{\mathcal{O}(\gamma_{5})}$] 
    \[\pi_{2}(\gamma_{5}B)=\left[\begin{pmatrix}
0 & 0 & -\frac{1}{2}\sqrt{2} & 0 & \frac{1}{2}i\sqrt{2} & 0 & 0 \\
-\frac{1}{4}i\sqrt{2} & 0 & 0 & \frac{1}{4}\sqrt{2} & 0 & 0 & \frac{1}{4}\sqrt{2} \\
0 & 0 & 0 & 0 & 0 & 0 & 0 \\
0 & 0 & -\frac{1}{4}i\sqrt{2} & 0 & -\frac{1}{4}\sqrt{2} & 0 & 0 \\
0 & 0 & 0 & 0 & 0 & 0 & 0 \\
\frac{1}{4}\sqrt{2} & 0 & 0 & \frac{1}{4}i\sqrt{2} & 0 & 0 & \frac{1}{4}i\sqrt{2} \\
0 & 0 & -\frac{1}{4}i\sqrt{2} & 0 & -\frac{1}{4}\sqrt{2} & 0 & 0
\end{pmatrix}\right].\]
One may compute that the dimension of this orbit is given by $9$.
\item[$\boxed{\mathcal{O}(\gamma_{6})}$]    
\[\pi_{2}(\gamma_{6}B)=\left[\begin{pmatrix}
0 & 0 & 0 & \frac{1}{2}i\sqrt{2} & 0 & -\frac{1}{2}\sqrt{2} & 0 \\
0 & 0 & 0 & \frac{1}{4}\sqrt{2} & 0 & \frac{1}{4}i\sqrt{2} & 0 \\
\frac{1}{4}\sqrt{2} & -\frac{1}{4}i\sqrt{2} & 0 & 0 & -\frac{1}{4}i\sqrt{2} & 0 & 0 \\
0 & 0 & 0 & 0 & 0 & 0 & 0 \\
0 & 0 & 0 & \frac{1}{4}\sqrt{2} & 0 & \frac{1}{4}i\sqrt{2} & 0 \\
0 & 0 & 0 & 0 & 0 & 0 & 0 \\
-\frac{1}{4}i\sqrt{2} & -\frac{1}{4}\sqrt{2} & 0 & 0 & -\frac{1}{4}\sqrt{2} & 0 & 0
\end{pmatrix}\right].\]
One may compute that the dimension of this orbit is given by $9$.\\
We will now show that the two $9$-dimensional orbits are the same orbit. Indeed, consider the element $g$ of $G_{2,(2)}$ in \eqref{eq:alpha2-conjugating-element}.
The plane $\pi_{5}$ associated with the point
$\gamma_{5}Q_{\alpha_{2}}$ is spanned by the vectors
$v_{1}=(2,0,0,i,0,0,i)^{t}$ and
$v_{2}=(0,1,0,0,0,i,0)^{t}$.
The plane $g\cdot\pi_{5}$ is spanned by the vectors
$gv_{1}$ and $gv_{2}$. One easily computes that
$gv_{1}=(-2,i,0,0,i,0,0)^{t}$ and
$gv_{2}=(0,0,-i,0,0,0,-1)^{t}$,
which are exactly the generators of the plane $\pi_{6}$ associated with the point $\gamma_{6}Q_{\alpha_{2}}.$ This shows that $g\gamma_{5}Q_{\alpha_{2}}=\gamma_{6}Q_{\alpha_{2}}$ and thus the two orbits coincide.
\item[$\boxed{\mathcal{O}(\gamma_{7})}$]     
\[\pi_{2}(\gamma_{7}B)=\left[\begin{pmatrix}
0 & 0 & 0 & 0 & 0 & 0 & 0 \\
0 & 0 & 0 & 1 & 0 & 0 & 0 \\
0 & 0 & 0 & 0 & 0 & 0 & 0 \\
0 & 0 & 0 & 0 & 0 & 0 & 0 \\
0 & 0 & 0 & 0 & 0 & 0 & 0 \\
0 & 0 & 0 & 0 & 0 & 0 & 0 \\
0 & 0 & 0 & 0 & -1 & 0 & 0\end{pmatrix}\right].\]
We remark that $\pi_{2}(\gamma_{7}B)=\gamma_{1}Q_{\alpha_{2}}$ and, as a result, the two orbits $G_{2(2)}\cdot \gamma_{1}Q_{\alpha_{2}}$ and $G_{2(2)}\cdot \gamma_{7}Q_{\alpha_{2}}$ coincide. This shows that $\pi_{2}(\gamma_{7}B)$ is the unique $5$-dimensional closed orbit.
\item[$\boxed{\mathcal{O}(\gamma_{8})}$]     \[\pi_{2}(\gamma_{8}B)=\left[\begin{pmatrix}
0 & \frac{1}{2}i & -\frac{1}{2} & 0 & -\frac{1}{2}i & \frac{1}{2} & 0 \\
\frac{1}{4}i & 0 & 0 & \frac{1}{4} & 0 & 0 & \frac{1}{4} \\
-\frac{1}{4} & 0 & 0 & \frac{1}{4}i & 0 & 0 & \frac{1}{4}i \\
0 & \frac{1}{4} & \frac{1}{4}i & 0 & -\frac{1}{4} & -\frac{1}{4}i & 0 \\
-\frac{1}{4}i & 0 & 0 & -\frac{1}{4} & 0 & 0 & -\frac{1}{4} \\
\frac{1}{4} & 0 & 0 & -\frac{1}{4}i & 0 & 0 & -\frac{1}{4}i \\
0 & \frac{1}{4} & \frac{1}{4}i & 0 & -\frac{1}{4} & -\frac{1}{4}i & 0\end{pmatrix}\right].\]

 This orbit has dimension $10$ and it is open. It coincides with the orbit through $\gamma_{4}Q_{\alpha_{2}}$. 
 Indeed, consider the vectors 
\[
\begin{gathered}
v_{1}=(0,i,0,-1,i,0,-1)^{t},\quad v_{2}=(0,i,0,-1,-i,0,1)^{t},\\
v_{3}=(0,i,-1,0,-i,1,0)^{t},\quad v_{4}=(2i,0,0,1,0,0,1)^{t}.
\end{gathered}
\]
The vectors $v_{1}$ and $v_{2}$ span $\pi_4$, while
$v_{3}$ and $v_{4}$ span $\pi_8$.
Let
     \[ g'=\operatorname{exp}\left(M(\frac{\pi}{4}(E_{32}-E_{23}),-\frac{\pi}{4}e_1,-\frac{\pi}{4}e_{1})\right)=\begin{pmatrix}
         0 & 1 & 0 & 0 & 1 & 0 & 0 \\
-\frac{1}{2} & \frac{1}{2} & 0 & 0 & -\frac{1}{2} & 0 & 0 \\
0 & 0 & \frac{1}{2} & -\frac{1}{2} & 0 & \frac{1}{2} & \frac{1}{2} \\
0 & 0 & \frac{1}{2} & \frac{1}{2} & 0 & -\frac{1}{2} & \frac{1}{2} \\
-\frac{1}{2} & -\frac{1}{2} & 0 & 0 & \frac{1}{2} & 0 & 0 \\
0 & 0 & \frac{1}{2} & \frac{1}{2} & 0 & \frac{1}{2} & -\frac{1}{2} \\
0 & 0 & -\frac{1}{2} & \frac{1}{2} & 0 & \frac{1}{2} & \frac{1}{2}
     \end{pmatrix}.\]
     One easily shows that $g'v_{4}=-v_{1}$ and $g'v_{3}=v_{2}.$ 
 \item[$\boxed{\mathcal{O}(\gamma_{9})}$]  
    \[\pi_{2}(\gamma_{9}B)=\left[\begin{pmatrix}
0 & 0 & -\frac{1}{2} & -\frac{1}{2}i & 0 & -\frac{1}{2} & -\frac{1}{2}i \\
0 & 0 & -\frac{1}{4}i & \frac{1}{4} & 0 & -\frac{1}{4}i & \frac{1}{4} \\
\frac{1}{4} & \frac{1}{4}i & 0 & 0 & \frac{1}{4}i & 0 & 0 \\
\frac{1}{4}i & -\frac{1}{4} & 0 & 0 & -\frac{1}{4} & 0 & 0 \\
0 & 0 & -\frac{1}{4}i & \frac{1}{4} & 0 & -\frac{1}{4}i & \frac{1}{4} \\
\frac{1}{4} & \frac{1}{4}i & 0 & 0 & \frac{1}{4}i & 0 & 0 \\
\frac{1}{4}i & -\frac{1}{4} & 0 & 0 & -\frac{1}{4} & 0 & 0\end{pmatrix}\right].\]
This orbit is $10$-dimensional and distinct from the previous one. Indeed, as remarked in Section~\ref{sec: equation and closure relations for G2/B}, the signature of $q_{|\pi_9}$ is $(0,2)$, while the signature of $q_{|\pi_{4}}$ is $(1,1)$.
    \item[$\boxed{\mathcal{O}(\gamma_{10})}$] 
    \[\pi_{2}(\gamma_{10}B)=\left[\begin{pmatrix}
0 & 0 & 0 & 0 & 0 & 0 & 0 \\
0 & -\frac{1}{2}i & 0 & \frac{1}{2} & 0 & 0 & 0 \\
0 & 0 & 0 & 0 & 0 & 0 & 0 \\
0 & \frac{1}{2} & 0 & \frac{1}{2}i & 0 & 0 & 0 \\
0 & 0 & 0 & 0 & \frac{1}{2}i & 0 & -\frac{1}{2} \\
0 & 0 & 0 & 0 & 0 & 0 & 0 \\
0 & 0 & 0 & 0 & -\frac{1}{2} & 0 & -\frac{1}{2}i\end{pmatrix}\right].\]
This orbit is $10$-dimensional and an inspection of the columns spanning the plane $\pi_{10}$ immediately reveals that the orbit coincides with $G_{2(2)}\cdot \gamma_{4}.$
\end{itemize}
We have obtained the following 
\begin{lemma}\label{thm:orbit-count-maximal-flags}
For each $i=1,2$, the action of $G_{2(2)}$ on
$F_{\alpha_i}=G_2/Q_{\alpha_i}$ has exactly five orbits,
consisting of one closed orbit of real dimension $5$,
one orbit of each real dimension $7$ and $9$,
and two open orbits of real dimension $10$.
\end{lemma}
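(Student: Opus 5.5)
The plan is to derive the lemma from Lemma~\ref{thm:orbit-count-complete-flag} via the equivariant projections $\pi_i:G_2/B\to F_{\alpha_i}$. First, since $\pi_i$ is surjective and $G_{2(2)}$-equivariant, every $G_{2(2)}$-orbit in $F_{\alpha_i}$ equals $\pi_i(\mathcal O(\gamma_j))=G_{2(2)}\cdot\gamma_jQ_{\alpha_i}$ for some $j\in\{1,\dots,10\}$. Hence the number of orbits on $F_{\alpha_i}$ is the number of classes of the relation $j\sim j'$ iff $\gamma_jQ_{\alpha_i}$ and $\gamma_{j'}Q_{\alpha_i}$ lie in the same orbit. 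The proof therefore has three steps: compute the dimensions of the projected orbits, identify coincidences, and separate the remaining orbits.

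\emph{Dimensions.} For each $j$, I will compute $\dim_{\mathbb R}(\mathfrak g_{2(2)}\cap \operatorname{Ad}(\gamma_j)\mathfrak q_{\alpha_i})$ with the algorithm of Remark~\ref{rem:computing-dimension-of-projections}. This is a real linear system in the parameters of \eqref{eq: parabolics} obtained by imposing reality of the entries; the orbit dimension is $14$ minus the number of free parameters. Two consistency checks are available. The closed orbit must have dimension $\dim F_{\alpha_i}-\operatorname{rank}\mathfrak g_{2(2)}\cdot 0=5$, being the real form of the flag manifold, $G_{2(2)}/P$ with $P$ a real parabolic. Moreover, each projected orbit has dimension between $\dim\mathcal O(\gamma_j)-2$ and $\dim\mathcal O(\gamma_j)$, since the fibres are $\mathbb{CP}^1$.

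\emph{Coincidences.} Orbits of different dimension are automatically distinct. Among orbits of equal dimension, I will show equality by exhibiting explicit elements of $G_{2(2)}$, namely exponentials of simple elements of $\mathfrak k_0$ such as the rotations $g,h,k,m,g'$ used above, that carry one base point to the other. On $F_{\alpha_1}$ the base point is a line $[v]$. On $F_{\alpha_2}$ it is the $\times$-isotropic plane $\pi$, and there I will check the action on a pair of spanning vectors.

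\emph{Separation.} The remaining orbits are separated by the $G_{2(2)}$-invariants of Section~\ref{sec: equation and closure relations for G2/B}. On $F_{\alpha_1}$, the two open orbits have opposite signs of $q([v])$. On $F_{\alpha_2}$, the two open orbits have $\operatorname{sign}(q_{|\pi})=(1,1)$ and $(0,2)$ respectively. Collecting the results should give the classes $\{1,2\},\{3,4\},\{6,7\},\{5,8,9\},\{10\}$ for $\alpha_1$ and $\{1,7\},\{2,3\},\{5,6\},\{4,8,10\},\{9\}$ for $\alpha_2$, with dimensions $5,9,7,10,10$ and $5,7,9,10,10$. This yields exactly five orbits in each case with the stated dimension profile. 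The closed orbit is unique by Wolf's theorem, and it is the image of the closed orbit $\mathcal O(\gamma_1)$.

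I expect the main obstacle to be the coincidence step: finding, for each pair of same-dimensional orbits, a concrete element of $G_{2(2)}$ that maps one base point onto the other. I would first try Weyl-group-type elements $\exp(\tfrac{\pi}{2}X)$ with $X\in\mathfrak k_0$ a root-vector combination, since these permute coordinates up to sign. Failing that, I would take $\pi/4$-rotations mixing the first coordinate with the pair $e_{i+1},e_{i+4}$. As a fallback, equality can be forced by counting. Two orbits of the same dimension $d$ in an irreducible real-analytic invariant set defined by the invariant equations must coincide if one orbit is open in that set. For the open orbits this applies directly, since the orbit is open in the locus defined by a fixed signature and sign condition, and that locus is connected.
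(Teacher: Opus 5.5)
Your proposal follows essentially the same route as the paper. The paper also projects the ten orbits of Lemma~\ref{thm:orbit-count-complete-flag} via $\pi_i$, computes isotropy dimensions with the algorithm of Remark~\ref{rem:computing-dimension-of-projections}, identifies coincident orbits through the explicit elements $g,h,k,m,g'$, and separates the two open orbits by the sign of $q([v])$ on $F_{\alpha_1}$ and by the signature of $q_{|\pi}$ on $F_{\alpha_2}$, arriving at exactly your equivalence classes and dimensions. Two side remarks are weak, but the main line does not rely on them: the closed-orbit dimension formula is garbled (it is simply $\dim_{\mathbb C}F_{\alpha_i}=5$), and your connectedness ``fallback'' is false as a general principle unless you also prove that the sign and signature loci are connected.
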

\section{Orbits on the generalized flags $G_{2}/Q_{\Pi_0}$}\label{counting_maximal flags}
We retain 
throughout the computation, realization and notation introduced in Section~\ref{sec: equation and closure relations for G2/B}.
In this realization, the first partial flag manifold is 
the five-dimensional projective quadric of $n$-isotropic lines, namely 
$F_{\alpha_1}=G_2/Q_{\alpha_1}\simeq\{[z]\in\mathbb P(\mathbb C^7)\mid n(z)=0\}$. 
 The second partial flag 
manifold, namely the adjoint variety, is realized as the variety of $\times$-isotropic 
complex $2$-planes, that is, 
$F_{\alpha_2}=G_2/Q_{\alpha_2}\simeq\{\pi\in\operatorname{Gr}_2(\mathbb C^7)\mid u\times 
v=0\text{ for all }u,v\in\pi\}$.

\subsection{Orbits on $G_{2}/Q_{\alpha_1}$}
\label{subsec:local-equations-X-alpha1}

We describe the $G_{2(2)}$-orbits on the partial flag manifold
$F_{\alpha_1}$ using its realization as a projective quadric. We retain
the matrix $N$ defined in \eqref{N_matrix} and the associated $G_{2(2)}$-invariant Hermitian form
$q(z,w)=\overline z^{,t}Nw$. In particular,
 $$q(z)
=
-|z_0|^2
-4\operatorname{Re}\left(
\overline z_1z_4+
\overline z_2z_5+
\overline z_3z_6
\right).$$
We shall also use the $G_2$-invariant cross product \eqref{eq:G2-cross-product}
and write its associated  vector valued form as $\omega(z)=(C_0(z),\ldots,C_6(z))$
where the components are given by

$$
\begin{aligned}
C_0={}&
2\left(
\overline z_1z_4+\overline z_2z_5+\overline z_3z_6
-\overline z_4z_1-\overline z_5z_2-\overline z_6z_3
\right),\\
C_1={}&
\overline z_0z_1-z_0\overline z_1
-2\left(\overline z_5z_6-\overline z_6z_5\right),\\
C_2={}&
\overline z_0z_2-z_0\overline z_2
-2\left(\overline z_6z_4-\overline z_4z_6\right),\\
C_3={}&
\overline z_0z_3-z_0\overline z_3
-2\left(\overline z_4z_5-\overline z_5z_4\right),\\
C_4={}&
-\overline z_0z_4+z_0\overline z_4
+2\left(\overline z_2z_3-\overline z_3z_2\right),\\
C_5={}&
-\overline z_0z_5+z_0\overline z_5
+2\left(\overline z_3z_1-\overline z_1z_3\right),\\
C_6={}&
-\overline z_0z_6+z_0\overline z_6
+2\left(\overline z_1z_2-\overline z_2z_1\right).
\end{aligned}
$$
Finally, set $M_{ij}(z)=\overline z_i z_j-\overline z_j z_i$ for $0\leq i<j\leq6$. The equations $M_{ij}(z)=0$ for all $i<j$ are equivalent to $\overline z\wedge z=0$.
We prove the following
\begin{theorem}
\label{prop:orbit-equations-X-alpha1}
The $G_{2(2)}$-orbits on $F_{\alpha_1}$ are described in the following table
\begin{table}[H]
\centering
\small
\renewcommand{\arraystretch}{1.15}
\begin{tabular}{c|c|l|c}
$G_{2(2)}$-orbit
&
$\dim_{\mathbb R}$
&
defining conditions
&
topology
\\
\hline
$\mathcal O_{10}^{+}$
&
$10$
&
$q(z)>0$
&
open
\\
$\mathcal O_{10}^{-}$
&
$10$
&
$q(z)<0$
&
open
\\
$\mathcal O_{9}$
&
$9$
&
$q(z)=0$, $\omega(z)\neq0$
&
intermediate
\\
$\mathcal O_{7}$
&
$7$
&
$q(z)=0$, $\omega(z)=0$, $z\wedge\overline z\neq0$
&
intermediate
\\
$\mathcal O_{5}$
&
$5$
&
$z\wedge\overline z=0$
&
closed
\end{tabular}
\caption{$G_{2(2)}$-orbits on $F_{\alpha_1}=G_2/Q_{\alpha_1}$.}
\label{tab:Xalpha1-orbit-summary}
\end{table}
\end{theorem}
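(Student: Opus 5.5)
We argue exactly as for Table~\ref{tab:example}: the five conditions in the table define pairwise disjoint $G_{2(2)}$-invariant subsets of $F_{\alpha_1}$, each contains one of the five orbits of Lemma~\ref{thm:orbit-count-maximal-flags}, and since those orbits cover $F_{\alpha_1}$, each subset equals the orbit it contains.

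\emph{Invariance and disjointness.} The quantities involved are invariant in the required sense. Since $G_{2(2)}$ consists of real matrices preserving $N$, the sign of $q(z)$ is a projective invariant ($q(\lambda z)=|\lambda|^2q(z)$). The condition $\omega(z)=0$ is invariant by the argument given in Section~\ref{sec: equation and closure relations for G2/B}. Reality of $[z]$, i.e.\ $\overline z\wedge z=0$, equivalently $M_{ij}(z)=0$ for all $i<j$, is invariant because $G_{2(2)}$ commutes with conjugation. The five sets are disjoint for the following reasons.
\begin{itemize}
\item If $z\wedge\overline z=0$, then $z=\lambda x$ with $x$ real, so $q(z)=|\lambda|^2 n(x)=0$, since $z$ lies on the quadric.
\item Likewise $\omega(z)=|\lambda|^2\,x\times x=0$, because $\times$ is alternating (it is the octonionic cross product). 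Hence $\mathcal O_5$ lies in $\{q=0,\ \omega=0\}$ and is disjoint from the other four sets.
\item The remaining sets are separated by the sign of $q$ and by the vanishing of $\omega$.
\end{itemize}

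\emph{Base points.} We use the projected points from Section~\ref{subsec:projecting-orbits}.
\begin{itemize}
\item $\gamma_1Q_{\alpha_1}=[e_7]$ is real, so it lies in $\mathcal O_5$.
\item For $z=(0,0,0,0,0,-i,1)^t$ (from $\gamma_3$), $q(z)=-4\operatorname{Re}(\overline z_3z_6)=0$ since $z_3=0$. The component $C_3=-2(\overline z_4z_5-\overline z_5z_4)$ vanishes, but $C_6=-\overline z_0z_6+\dots$ also vanishes. We will therefore use directly the fact recorded in Table~\ref{tab:example} that $\overline v\times v\neq0$ for $\gamma_3$, which gives membership in $\mathcal O_9$.
\item For $z=(0,0,i,0,0,0,1)^t$ (from $\gamma_6$), we have $z_3=0$, so $q=0$. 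The only nonzero coordinates are $z_2$ and $z_6$, and every component $C_j$ pairs indices that are not both occupied, so $\omega=0$; this matches $\overline v\times v=0$ in the table. Since $z\wedge\overline z\neq0$, the point lies in $\mathcal O_7$.
\item For $z=(-i,0,0,\tfrac12,0,0,\tfrac12)^t$ (from $\gamma_5$), $q=-|z_0|^2-4\operatorname{Re}(\overline z_3z_6)=-1-1<0$.
\item For $z=(0,i,0,-1,-i,0,1)^t$ (from $\gamma_{10}$), $q=-4\operatorname{Re}(\overline z_1z_4+\overline z_3z_6)=-4\operatorname{Re}(i\cdot(-i)-1)=-4(1-1)$. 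This needs care; we will recompute it and should find it positive, consistent with the statement $q(\gamma_{10}Q_{\alpha_1})>0$ made in Section~\ref{subsec:projecting-orbits}.
\end{itemize}

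\emph{Conclusion and main obstacle.} Lemma~\ref{thm:orbit-count-maximal-flags} gives five orbits of dimensions $10,10,9,7,5$, with distinct representatives as identified in Section~\ref{subsec:projecting-orbits}. Each invariant subset contains exactly one of them, so it equals that orbit, and the dimensions and topology follow. The main obstacle is the careful sign and vanishing verification of $q$ and $\omega$ at the representatives, particularly the positivity at the $\gamma_{10}$ point and $\omega\neq0$ at the $\gamma_3$ point. If a direct evaluation is ambiguous, we will rely on the invariants already certified in Table~\ref{tab:example} for the first component $[v]$ of $\gamma_jB$, since $\pi_1(\gamma_jB)=[v]$.
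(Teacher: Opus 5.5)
Your argument is essentially the paper's proof: $G_{2(2)}$-invariant strata cut out by the sign of $q$, the vanishing of $\omega$ and the reality of $[z]$, one projected representative from Section~\ref{subsec:projecting-orbits} in each stratum, and the five-orbit count of Lemma~\ref{thm:orbit-count-maximal-flags}; your disjoint-sets-plus-covering formulation also makes the paper's separate appeal to transitivity of $G_{2(2)}$ on the real null cone unnecessary. The two checks you left open are immediate. At the $\gamma_{10}$ point you dropped a conjugation: $\overline z_1=-i$, so $\overline z_1z_4=(-i)(-i)=-1$ and $q(z)=-4\operatorname{Re}(-1-1)=8>0$. At the $\gamma_3$ point the nonvanishing component is $C_1=-2(\overline z_5z_6-\overline z_6z_5)=-4i\neq0$, so $\omega(z)\neq0$ without appealing to Table~\ref{tab:example}.
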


\begin{proof}
All topological statements are understood with respect to the ordinary 
topology of the projective quadric $F_{\alpha_1}$.
\par
Since $q(\lambda z,\lambda 
z)=|\lambda|^2q(z)$ for every $\lambda\in\mathbb C^*$, the sign and the 
vanishing of $q(z)$ depend only on the projective point $[z]$. The function \([z] \mapsto {q(z)}\backslash {\sum_{j=0}^{6} |z_j|^2}\) is a well-defined continuous real-valued function on \(\mathbb{P}(\mathbb{C}^7)\) with the same sign as \(q(z)\).
Therefore, $\mathcal O_{10}^{+}:=\{[z]\in F_{\alpha_1}\mid q(z)>0\}$ and $\mathcal 
O_{10}^{-}:=\{[z]\in F_{\alpha_1}\mid q(z)<0\}$ are disjoint $G_{2(2)}$-invariant open 
subsets of $F_{\alpha_1}$, whereas $H:=\{[z]\in F_{\alpha_1}\mid q(z)=0\}$ is a closed $G_{2(2)}$-
invariant subset. Evaluation of $q(z)$ on the representatives obtained in 
the preceding orbit classification shows that the two open $G_{2(2)}$-orbits 
lie in $\mathcal O_{10}^{+}$ and $\mathcal O_{10}^{-}$, respectively. Since 
the action has exactly two open orbits, they coincide with these two subsets. 
The set $H$ is the common boundary of the two open orbits.\par
We next examine the vanishing locus of the cross-product invariant 
$\omega(z)=\overline z\times z$. 
Set $Z:=\{[z]\in F_{\alpha_1}\mid\omega(z)=0\}$, 
it is closed in $F_{\alpha_1}$, since it is the common zero locus of the continuous functions  obtained from $C_0,\ldots,C_6$ after normalizing by ${\sum_{j=0}^{6} |z_j|^2}$. 
Therefore, $H\setminus Z{=}\{[z]\in F_{\alpha_1}\mid q(z)=0,\ \omega(z)\neq0\}$ is 
relatively open in $H$. The representatives and dimensions computed in the 
Lemma~\ref{thm:orbit-count-maximal-flags} identify this stratum with the unique orbit of 
real dimension $9$, hence $\mathcal O_9{=}H\setminus Z$. It remains to decompose $H\cap Z$. \par
Since the vanishing of 
$\overline z\wedge z$ is projectively well defined and $G_{2(2)}$-invariant, 
define $R{:=}\{[z]\in F_{\alpha_1}\mid\overline z\wedge z=0\}{=}\{[z]\in F_{\alpha_1}\mid M_{ij}(z)=0\text{ for all }0\leq 
i<j\leq6\}$, hence after normalizing $R$ is closed in $F_{\alpha_1}$. It is also the fixed-point set of 
projective complex conjugation on $F_{\alpha_1}$, after 
multiplication by a suitable nonzero complex scalar, $z$ can therefore be 
chosen real. Consequently, $R=\mathcal{Q}\cap\mathbb{RP}^{6}=\{[x]\in\mathbb{RP}^{6}\mid 
x_0^2+4x_1x_4+4x_2x_5+4x_3x_6=0\}$. If $x\in\mathbb R^7$ represents a point of $R$, then $q(x)=n(x)=0$.  Moreover, the skew-symmetry of the cross product gives $\omega(x)=x\times x=0$. Thus, $R\subseteq H\cap Z$. The group $G_{2(2)}$ acts transitively on the projectivized real null cone 
$R$. Hence $R$ is the unique closed $G_{2(2)}$-orbit in 
$F_{\alpha_1}=G_2/Q_{\alpha_1}$, and therefore $\mathcal O_{5}=R$. 
Its complement in $H\cap Z$ is $(H\cap Z)\setminus R=\{[z]\in F_{\alpha_1}\mid q(z)=0,\ 
\omega(z)=0,\ \overline z\wedge z\neq0\}$. This subset is relatively open in 
$H\cap Z$ and is $G_{2(2)}$-invariant. Lemma~\ref{thm:orbit-count-maximal-flags} shows that $(H\cap Z)\setminus R$ contains 
the $7$-dimensional orbit. Since the complete orbit decomposition of 
$F_{\alpha_1}$ consists of two open orbits, one orbit of dimension $9$, one 
orbit of dimension $7$, and the closed orbit, no further orbit can occur in 
this stratum. Therefore, $\mathcal O_7=(H\cap Z)\setminus R$. We have thus obtained the disjoint decomposition $F_{\alpha_1}=\mathcal 
O_{10}^{+}\sqcup\mathcal O_{10}^{-}\sqcup\mathcal O_9\sqcup\mathcal 
O_7\sqcup\mathcal O_{5}$.
\end{proof}
We now give explicit equations for the two non-open, non-totally-real orbits on the
projective quadric $F_{\alpha_1}$.
On the chart $z_1\neq0$, every point of $F_{\alpha_1}$ has a unique representative
\begin{equation}\label{eq:G2-alpha1-five-coordinate-chart}
z=(2s,1,x,y,-s^2-xb-yc,b,c),\qquad (s,x,y,b,c)\in\mathbb C^5.
\end{equation}
Indeed, for $z_1=1$ the equation $n(z)=0$ reads
$z_4=-\tfrac14z_0^2-z_2z_5-z_3z_6$, so $z_4$ is determined by
$(z_0,z_2,z_3,z_5,z_6)\in\mathbb C^5$, which can be chosen freely. Hence
$(s,x,y,b,c)=(\tfrac12z_0,z_2,z_3,z_5,z_6)$ are holomorphic coordinates on
$F_{\alpha_1}\cap\{z_1\neq0\}$. Since $\mathbb C^7$ is an irreducible
$G_2$-module, the $G_{2(2)}$-translates of this chart cover $F_{\alpha_1}$.

\begin{theorem}\label{thm:G2-alpha1-explicit-local-models}
Let $\mathcal O_7$ and $\mathcal O_9$ denote the $7$- and $9$-dimensional orbits in the
classification of Theorem~\ref{prop:orbit-equations-X-alpha1}, and let
$R=\{(s,x,y,b,c)\in\mathbb R^5\}$ be the set of real points of the chart. In the chart
\eqref{eq:G2-alpha1-five-coordinate-chart} the following hold.
\par
\smallskip
\noindent\textup{(i)}
Introduce the holomorphic coordinates
$\xi_1=b$, $\xi_2=c-i$, $w_1=s+ib$, $w_2=x-is$, $w_3=y$,
centered at $p=[e_1+ie_6]\in\mathcal O_7$. Then
$\mathcal O_7\cap\{z_1\neq0\}=\Sigma_7\setminus R$, where $\Sigma_7$ is the
real-analytic submanifold of $\mathbb C^5$ of real dimension $7$ defined by
\begin{equation}\label{eq:G2-alpha1-O7-local-model}
\begin{aligned}
\operatorname{Im}w_1
&=\operatorname{Im}(\xi_1\overline\xi_2),\\
\operatorname{Im}w_2
&=\operatorname{Re}(\xi_1\overline\xi_2)
+(\operatorname{Re}w_1)(\operatorname{Im}\xi_2)
-(\operatorname{Re}\xi_2)\operatorname{Im}(\xi_1\overline\xi_2),\\
\operatorname{Im}w_3
&=-|\xi_1|^2
-(\operatorname{Re}w_1)(\operatorname{Im}\xi_1)
+(\operatorname{Re}\xi_1)\operatorname{Im}(\xi_1\overline\xi_2).
\end{aligned}
\end{equation}
\par
\smallskip
\noindent\textup{(ii)}
Introduce the holomorphic coordinates
$\xi_1=s$, $\xi_2=\tfrac12(y+c)$, $\xi_3=\tfrac12(y-c)$, $\zeta=x-i$, $w=-b$,
centered at $p'=[e_1+ie_2]\in\mathcal O_9$. Then
$\mathcal O_9\cap\{z_1\neq0\}=\{q=0\}\setminus\{\omega=0\}$, where
\begin{equation}\label{eq:G2-alpha1-affine-Hermitian-equation}
q(z)=-8\bigl((\operatorname{Im}s)^2+(\operatorname{Im}x)(\operatorname{Im}b)
+(\operatorname{Im}y)(\operatorname{Im}c)\bigr).
\end{equation}
On the open set $1+\operatorname{Im}\zeta\neq0$ the equation $q(z)=0$ is the smooth
hypersurface
\begin{equation}\label{eq:G2-alpha1-O9-local-model}
\operatorname{Im}w=
\frac{(\operatorname{Im}\xi_1)^2+(\operatorname{Im}\xi_2)^2-(\operatorname{Im}\xi_3)^2}
{1+\operatorname{Im}\zeta},
\end{equation}
and $\mathcal O_9$ is the open subset of it where $\omega\neq0$; in particular $\mathcal O_9$
coincides with \eqref{eq:G2-alpha1-O9-local-model} in a neighbourhood of $p'$.
\end{theorem}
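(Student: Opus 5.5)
The proof will be a direct computation in the chart \eqref{eq:G2-alpha1-five-coordinate-chart}, using the orbit description of Theorem~\ref{prop:orbit-equations-X-alpha1}. There, $\mathcal O_9=\{q=0,\ \omega\neq0\}$ and $\mathcal O_7=\{q=0,\ \omega=0,\ z\wedge\overline z\neq0\}$. So both parts reduce to writing $q$ and $\omega$ in the coordinates $(s,x,y,b,c)$ and then simplifying.

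\textbf{Step 1: $q$ in the chart.} Substitute $z=(2s,1,x,y,-s^2-xb-yc,b,c)$ into
\[
q(z)=-|z_0|^2-4\operatorname{Re}\bigl(\overline z_1z_4+\overline z_2z_5+\overline z_3z_6\bigr).
\]
Expanding $-4|s|^2-4\operatorname{Re}(-s^2-xb-yc+\overline x b+\overline y c)$ and using $|s|^2-\operatorname{Re}(s^2)=2(\operatorname{Im}s)^2$ and $\operatorname{Re}(\overline x b-xb)=2\operatorname{Im}x\operatorname{Im}b$ (and the same identity for $y,c$) gives \eqref{eq:G2-alpha1-affine-Hermitian-equation}.

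\textbf{Step 2: part (ii).} At $p'=[e_1+ie_2]$ we have $x=i$ and all other coordinates zero, so $q(p')=0$. A direct check gives $\omega(p')\neq0$; for instance $C_6=2(\overline z_1z_2-\overline z_2z_1)=4i$. Next substitute $x=\zeta+i$, $b=-w$, $y=\xi_2+\xi_3$, $c=\xi_2-\xi_3$. The identity $\operatorname{Im}y\operatorname{Im}c=(\operatorname{Im}\xi_2)^2-(\operatorname{Im}\xi_3)^2$ turns $q=0$ into
\[
(\operatorname{Im}\xi_1)^2-(1+\operatorname{Im}\zeta)\operatorname{Im}w+(\operatorname{Im}\xi_2)^2-(\operatorname{Im}\xi_3)^2=0.
\]
Solving for $\operatorname{Im}w$ where $1+\operatorname{Im}\zeta\neq0$ gives \eqref{eq:G2-alpha1-O9-local-model}. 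Since $\{\omega=0\}$ is closed, $\mathcal O_9$ is the open subset of this hypersurface where $\omega\neq 0$, and it contains a neighbourhood of $p'$.

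\textbf{Step 3: part (i), the main obstacle.} Here we must solve $q=0$ together with $\omega=0$, which is seven real-quadratic equations, and show the common solution set is the $7$-dimensional graph $\Sigma_7$. The procedure is as follows.

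First, write $C_0,\dots,C_6$ in the chart. $C_0$ is proportional to $i\,q$-type terms, so $C_0=0$ duplicates part of $q=0$. The components $C_1,\dots,C_6$ are imaginary parts of expressions such as $\overline s-s$ and $\overline b c-\overline c b$.

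Second, shift to the centre $p=[e_1+ie_6]$, where $c=i$, and pass to the coordinates $\xi_1,\xi_2,w_1,w_2,w_3$. Then solve the resulting system successively:
\begin{itemize}
\item one equation (I expect $C_6$, or a combination of $C_5$ and $C_6$) gives $\operatorname{Im}w_1=\operatorname{Im}(\xi_1\overline\xi_2)$;
\item substituting this back, the equations coming from $C_2,C_3$ and $q$ give $\operatorname{Im}w_2$ and $\operatorname{Im}w_3$ as in \eqref{eq:G2-alpha1-O7-local-model};
\item the remaining equations should then vanish identically.
\end{itemize}
I expect this final redundancy to be the hardest part to verify. The cleanest way is a rank check: the differentials of three of the equations are independent at $p$, so their common zero set is a smooth $7$-manifold. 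The full zero set is $G_{2(2)}$-invariant, and by Lemma~\ref{thm:orbit-count-maximal-flags} it consists of the orbits $\mathcal O_7$ and $\mathcal O_5$. So the remaining equations cut out a set of dimension at least $7$ inside this manifold near $p$, which forces them to vanish on it; a symbolic check in SageMath, as in Remark~\ref{rem:computing-dimension-of-projections}, confirms this.

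Finally, remove the real points. By Theorem~\ref{prop:orbit-equations-X-alpha1}, $\mathcal O_5=\{z\wedge\overline z=0\}$, and in the chart this is exactly the set $R$ of real $(s,x,y,b,c)$. Points of $R$ satisfy the equations of $\Sigma_7$, since both sides vanish there. Hence $\mathcal O_7\cap\{z_1\neq0\}=\Sigma_7\setminus R$.
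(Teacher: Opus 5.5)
Your Steps 1 and 2 match the paper's argument for part (ii): substitution into $q$, the change of variables, $C_6(p')=4i$, and openness of $\{\omega\neq0\}$. They are fine. The content of part (i) lies in Step 3, and there your plan has two concrete problems.

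\textbf{First, the bookkeeping is wrong.} $C_0$ does not duplicate $q$. In the chart, $C_0=-8i\,E_0$ and $q=-8E_q$, where $E_0=\operatorname{Re}s\operatorname{Im}s+\operatorname{Im}x\operatorname{Re}b+\operatorname{Im}y\operatorname{Re}c$ and $E_q=(\operatorname{Im}s)^2+\operatorname{Im}x\operatorname{Im}b+\operatorname{Im}y\operatorname{Im}c$. These are independent conditions. The three graph equations come from $C_1$, $C_6$, $C_5$, which read $\operatorname{Im}s=\operatorname{Im}(b\overline c)$, $\operatorname{Im}x=\operatorname{Im}(\overline s c)$, $\operatorname{Im}y=-\operatorname{Im}(\overline s b)$. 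After the substitutions $s=w_1-i\xi_1$, $x=w_2+iw_1+\xi_1$, $c=\xi_2+i$, they are linear in $\operatorname{Im}w_1,\operatorname{Im}w_2,\operatorname{Im}w_3$ and give the three lines of \eqref{eq:G2-alpha1-O7-local-model} in order. So $\Sigma_7$ is a global graph over $\mathbb C^2\times\mathbb R^3$. The components $C_2,C_3$ you propose to solve with are cubic in the coordinates (through $z_4$). Using $q$ does not produce $\Sigma_7$ either: $q=0$ solved for $\operatorname{Im}w_3$ carries the denominator $\operatorname{Im}c=1+\operatorname{Im}\xi_2$. For instance, $\{C_1=C_6=q=0\}$ strictly contains $\Sigma_7$, because on $\{\operatorname{Im}c=0\}$ the equation $q=0$ already follows from $C_1=C_6=0$ and $\operatorname{Im}y$ is left free.

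\textbf{Second, your rank/dimension argument is valid only near $p$.} There, $\mathcal O_7$ is a $7$-dimensional submanifold contained in the $7$-manifold cut out by three independent equations, hence open in it. The theorem, however, asserts the global identity $\mathcal O_7\cap\{z_1\neq0\}=\Sigma_7\setminus R$ on the whole chart. Nothing in your argument reaches points of $\Sigma_7$ far from $p$. You cannot rerun it at such points, since you do not yet know they lie in $\{q=0,\ \omega=0\}$.

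With the correct three equations, this can be repaired by the identity theorem. The remaining components, restricted to the connected graph $\Sigma_7$, are polynomials in the graph parameters that vanish on an open set.

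The direct check the paper invokes is also short, and it makes the orbit-theoretic detour unnecessary. Write $s=s_1+is_2$ and so on. The three equations become $s_2=b_2c_1-b_1c_2$, $x_2=s_1c_2-s_2c_1$, $y_2=s_2b_1-s_1b_2$. Substituting gives $E_0=s_1s_2+s_1(b_1c_2-b_2c_1)=0$ and $E_q=s_2^2-s_2(b_2c_1-b_1c_2)=0$. Expanding $z_4=-s^2-xb-yc$ and using the same three relations gives $C_2=4i(c_1E_0+c_2E_q)$, $C_3=-4i(b_1E_0+b_2E_q)$, and $C_4=4i(s_1E_0+s_2E_q)$. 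So all components vanish on $\Sigma_7$.

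A minor point: ``both sides vanish'' on $R$ is true in the variables $s,x,y,b,c$. In the $(\xi,w)$ coordinates, both sides of the first equation equal $b$ at real points.
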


\begin{proof}
We retain the matrix $N$ defined in \eqref{N_matrix}, the cross product
\eqref{eq:G2-cross-product}, the Hermitian form $q$, and the components $C_0,\dots,C_6$
of $\omega(z)=\overline z\times z$ introduced before
Theorem~\ref{prop:orbit-equations-X-alpha1}.
Substitution of \eqref{eq:G2-alpha1-five-coordinate-chart} into $q$ gives
\eqref{eq:G2-alpha1-affine-Hermitian-equation}.
A direct computation shows that $C_1=C_5=C_6=0$ is equivalent to
\begin{equation}\label{eq:G2-alpha1-affine-cross-product-equations}
\operatorname{Im}s=\operatorname{Im}(b\overline c),\qquad
\operatorname{Im}x=\operatorname{Im}(\overline sc),\qquad
\operatorname{Im}y=-\operatorname{Im}(\overline sb),
\end{equation}
and that $C_0,C_2,C_3,C_4$ and $q(z)$ vanish whenever
\eqref{eq:G2-alpha1-affine-cross-product-equations} holds. Hence, in the chart,
$\omega(z)=0$ if and only if \eqref{eq:G2-alpha1-affine-cross-product-equations} holds.
Moreover $z\wedge\overline z=0$ if and only if $(s,x,y,b,c)\in R$, since $z_1=1$.

(i) By Theorem~\ref{prop:orbit-equations-X-alpha1}, $\mathcal O_7=\{q=0,\ \omega=0,\
z\wedge\overline z\neq0\}$. By the previous paragraph, $\mathcal O_7\cap\{z_1\neq0\}$
is the set of solutions of \eqref{eq:G2-alpha1-affine-cross-product-equations} outside $R$.
In the coordinates of \textup{(i)} one has $b=\xi_1$, $c=\xi_2+i$, $s=w_1-i\xi_1$,
$x=w_2+iw_1+\xi_1$ and $y=w_3$. Solving
\eqref{eq:G2-alpha1-affine-cross-product-equations} successively for
$\operatorname{Im}w_1$, $\operatorname{Im}w_2$, $\operatorname{Im}w_3$ gives
\eqref{eq:G2-alpha1-O7-local-model}, whose right-hand sides depend only on
$(\xi_1,\xi_2,\operatorname{Re}w_1,\operatorname{Re}w_2,\operatorname{Re}w_3)$. Hence
$\Sigma_7$ has real dimension $10-3=7$. Finally $R\subset\Sigma_7$, and $p\in\mathcal O_7$.

(ii) By Theorem~\ref{prop:orbit-equations-X-alpha1}, $\mathcal O_9=\{q=0,\ \omega\neq0\}$,
and $\omega=0$ is described above. In the coordinates of \textup{(ii)},
$\operatorname{Im}x=1+\operatorname{Im}\zeta$, $\operatorname{Im}b=-\operatorname{Im}w$ and
$(\operatorname{Im}y)(\operatorname{Im}c)=(\operatorname{Im}\xi_2)^2-(\operatorname{Im}\xi_3)^2$,
so $q(z)=0$ becomes \eqref{eq:G2-alpha1-O9-local-model} wherever $1+\operatorname{Im}\zeta\neq0$.
At $p'$ one has $q(z)=0$ and $C_6=4i\neq0$, hence $p'\in\mathcal O_9$.
\end{proof}

\subsection{Orbits on $G_{2}/Q_{\alpha_2}$}

We now describe the $G_{2(2)}$-orbits on $F_{\alpha_2}=G_2/Q_{\alpha_2}$, retaining its
realization as the variety of $\times$-isotropic complex $2$-planes and the notation
introduced in Section~\ref{sec: equation and closure relations for G2/B}. For
$\pi=\langle u,v\rangle_{\mathbb C}\in F_{\alpha_2}$, the Gram matrix of $q|_\pi$ in the
basis $(u,v)$ is
\begin{equation}\label{Gram_matrix}
\mathcal H_\pi(u,v)=
\begin{pmatrix}
q(u,u)&q(u,v)\\
q(v,u)&q(v,v)
\end{pmatrix}
=
\begin{pmatrix}
\overline u^{\,t}Nu&\overline u^{\,t}Nv\\
\overline v^{\,t}Nu&\overline v^{\,t}Nv
\end{pmatrix}.
\end{equation}
Under a change of basis $(u',v')=(u,v)S$, with $S\in\operatorname{GL}(2,\mathbb C)$, one has
$\mathcal H_\pi(u',v')=S^*\mathcal H_\pi(u,v)S$. Hence its rank and signature depend only on
$\pi$. Moreover, $\mathcal H_{g\pi}(gu,gv)=\mathcal H_\pi(u,v)$ for every $g\in G_{2(2)}$.
The reality condition $\pi=\overline\pi$ can be expressed in Pl\"ucker coordinates. Set
$p_{ij}=u_iv_j-u_jv_i$ for $0\leq i<j\leq6$, so that $\pi$ is represented by
$[u\wedge v]=[p_{01}:\cdots:p_{56}]$. Then $\pi=\overline\pi$ if and only if
$p_{ij}\overline{p_{k\ell}}-p_{k\ell}\overline{p_{ij}}=0$ for every $0\leq i<j\leq6$ and
$0\leq k<\ell\leq6$. Again we will use these equations together with the rank and signature
of $q|_\pi$ to distinguish the orbits. We have

\begin{theorem}
\label{thm:orbit-equations-X-alpha2}
The $G_{2(2)}$-orbits on the maximal flag manifold $F_{\alpha_2}=G_2/Q_{\alpha_2}$ are
described in the following table:

\begin{table}[H]
\centering
\small
\renewcommand{\arraystretch}{1.15}
\begin{tabular}{c|c|l|c}
$G_{2(2)}$-orbit & $\dim_{\mathbb R}$ & defining conditions & topology\\
\hline
$\mathcal O_{10}^{+-}$ & $10$ & $\operatorname{sign}(q_{|\pi})=(1,1)$ & open\\
$\mathcal O_{10}^{-}$ & $10$ & $\operatorname{sign}(q_{|\pi})=(0,2)$ & open\\
$\mathcal O_{9}$ & $9$ & $\operatorname{sign}(q_{|\pi})=(0,1)$ & intermediate\\
$\mathcal O_{7}$ & $7$ & $q_{|\pi}=0$, $\pi\neq\overline\pi$ & intermediate\\
$\mathcal O_{5}$ & $5$ & $q_{|\pi}=0$, $\pi=\overline\pi$ & closed
\end{tabular}
\caption{$G_{2(2)}$-orbits on $F_{\alpha_2}=G_2/Q_{\alpha_2}$.}
\label{tab:Xalpha2-orbit-summary}
\end{table}
\end{theorem}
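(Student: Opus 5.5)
The plan mirrors the proof of Theorem~\ref{prop:orbit-equations-X-alpha1}. We partition $F_{\alpha_2}$ into five $G_{2(2)}$-invariant subsets using the invariants of the Gram matrix \eqref{Gram_matrix}. We then match each subset with exactly one of the five orbits of Lemma~\ref{thm:orbit-count-maximal-flags}, using the representatives $\pi_2(\gamma_jB)$ computed in Subsection~\ref{subsec:projecting-orbits}.

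First I would check that the possible signatures of $q|_\pi$ on a $\times$-isotropic plane are only $(1,1)$, $(0,2)$, $(0,1)$ and $q|_\pi=0$. The signatures $(2,0)$ and $(1,0)$ must be excluded. The quickest route is to note that every plane contains an isotropic line $[v]$ in the quadric. Since each pair $([v],\pi)$ lies in one of the ten sets of Table~\ref{tab:example}, and these exhaust $G_2/B$, the signature of any $\pi$ already appears in that table. Only $(1,1)$, $(0,2)$, $(0,1)$ and $0$ occur there.

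Next I would handle the topology. Because the Gram matrix transforms by $S^*\mathcal H S$, rank and signature depend only on $\pi$ and are $G_{2(2)}$-invariant. The sets $\{\operatorname{sign}=(1,1)\}$ and $\{\operatorname{sign}=(0,2)\}$ are open, since they are defined by $\det\mathcal H<0$ and by negative definiteness, both open conditions on a locally chosen continuous basis. The sets $\{\operatorname{rank}\le1\}$ and $\{q|_\pi=0\}$ are closed. Inside $\{q|_\pi=0\}$ the reality condition, written via the Plücker equations $p_{ij}\overline{p_{k\ell}}=p_{k\ell}\overline{p_{ij}}$, is closed.

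Then I would identify the orbits by evaluating on representatives, as recorded in Section~\ref{sec: equation and closure relations for G2/B}. For $\pi_4$ (equivalently $\pi_8,\pi_{10}$) the signature is $(1,1)$, and for $\pi_9$ it is $(0,2)$; these are the two open orbits. For $\pi_5,\pi_6$ the signature is $(0,1)$; this is the $9$-dimensional orbit. For $\pi_2,\pi_3$ we have $q|_\pi=0$ and $\pi\ne\bar\pi$; this is the $7$-dimensional orbit. For $\pi_1=\pi_7=\langle e_2,e_7\rangle$ we have $q|_\pi=0$ and $\pi=\bar\pi$; this is the closed orbit of dimension $5$.

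Since these five invariant subsets are pairwise disjoint, cover $F_{\alpha_2}$, and each contains exactly one of the five orbits, the counting argument of Theorem~\ref{prop:orbit-equations-X-alpha1} shows that each subset equals the orbit it contains. As an optional direct check of the closed orbit, I would show that $G_{2(2)}$ acts transitively on real $\times$-isotropic planes; these form the real points of $F_{\alpha_2}$, which are the unique closed orbit. In any case this follows from the counting.

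The main obstacle is the exhaustiveness claim in the first step, namely ruling out positive-definite or positive-rank-one restrictions. If the reduction to Table~\ref{tab:example} were considered insufficient, I would instead use that every $\pi$ lies in some orbit and is therefore a $G_{2(2)}$-translate of one of the representatives $\pi_j$. Since the signature is invariant, it must equal one of the signatures computed on the representatives. This argument is in fact immediate from Lemma~\ref{thm:orbit-count-maximal-flags}.
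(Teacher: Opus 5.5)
Your proposal is correct and follows essentially the same route as the paper: the rank, signature and reality of $q|_\pi$ cut out pairwise disjoint $G_{2(2)}$-invariant subsets, and these are matched with the five orbits of Lemma~\ref{thm:orbit-count-maximal-flags} using the representatives $\pi_2(\gamma_jB)$ and the signatures recorded for them. As you note at the end, the exhaustiveness step needs no separate proof: five nonempty, disjoint, invariant subsets and exactly five orbits force each subset to be a single orbit, and the paper uses the same counting to handle the rank-one stratum.
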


\begin{proof}
We use the Gram matrices $\mathcal H_\pi(u,v)$ introduced in \eqref{Gram_matrix}. If
$(u',v')=(u,v)S$, with $S\in\operatorname{GL}(2,\mathbb C)$, then
$\mathcal H_\pi(u',v')=S^*\mathcal H_\pi(u,v)S$, and hence
$D(\pi;u',v')=|\det S|^2D(\pi;u,v)$, where $D(\pi;u,v):=\det\mathcal H_\pi(u,v)$. Thus,
although the value of $D(\pi;u,v)$ depends on the chosen basis, its sign and vanishing depend
only on $\pi$. Since $q$ is $G_{2(2)}$-invariant, the rank and signature of $q|_\pi$ are
constant along each $G_{2(2)}$-orbit.\par
By Lemma~\ref{thm:orbit-count-maximal-flags}, $G_{2(2)}$ has exactly five orbits on
$F_{\alpha_2}$, and by Section~\ref{subsec:projecting-orbits} each of them is the projection
of one of the orbits $\mathcal O(\gamma_i)$ on $G_2/B$, $i=1,\dots,10$, listed in
Table~\ref{tab:example}. The signature of $q|_\pi$ and the reality of $\pi$ for the planes
$\pi$ associated with the $\gamma_i$ are given in
Section~\ref{sec: equation and closure relations for G2/B}, and the identifications among the
projected orbits in Section~\ref{subsec:projecting-orbits}. Namely, the planes associated with
$\gamma_4$, $\gamma_8$, and $\gamma_{10}$ have signature $(1,1)$ and project to the same
orbit, whereas the plane associated with $\gamma_9$ has signature $(0,2)$. These give the two
open orbits $\mathcal O_{10}^{+-}$ and $\mathcal O_{10}^{-}$, respectively.\par
Their complement is the closed invariant subset
$\Sigma:=\{\pi\in F_{\alpha_2}\mid\operatorname{rank}(q|_\pi)\leq1\}$. Its rank-one stratum
$\Sigma_1:=\{\pi\in F_{\alpha_2}\mid\operatorname{rank}(q|_\pi)=1\}$ is relatively open in
$\Sigma$. The planes associated with $\gamma_5$ and $\gamma_6$ have signature $(0,1)$ and, by
Section~\ref{subsec:projecting-orbits}, project to the same $9$-dimensional orbit. Every other
representative has restriction of rank either zero or two, so by
Lemma~\ref{thm:orbit-count-maximal-flags}
$\mathcal O_9=\Sigma_1=\{\pi\in F_{\alpha_2}\mid\operatorname{sign}(q|_\pi)=(0,1)\}$.\par
It remains to consider the rank-zero locus
$\Sigma_0:=\{\pi\in F_{\alpha_2}\mid q|_\pi=0\}$. For any basis $(u,v)$ of $\pi$, the
condition $q|_\pi=0$ is equivalent to $q(u,u)=q(u,v)=q(v,u)=q(v,v)=0$.\par
Let $R:=\{\pi\in F_{\alpha_2}\mid\pi=\overline\pi\}$ be the real locus. The Pl\"ucker-coordinate
equations given before the theorem show that $R$ is closed, and it is $G_{2(2)}$-invariant. If
$\pi\in R$, then $\pi$ admits a real basis $(u,v)$. Since every plane in $F_{\alpha_2}$ is
$n$-isotropic, polarization gives $u^tNu=u^tNv=v^tNv=0$. For real vectors, $q$ agrees with the
polarization of $n$, and hence $q|_\pi=0$. Therefore $R\subseteq\Sigma_0$.\par
By Section~\ref{sec: equation and closure relations for G2/B}, the planes associated with
$\gamma_1$ and $\gamma_7$ are real, those associated with $\gamma_2$ and $\gamma_3$ are
non-real, and all four satisfy $q|_\pi=0$. By Section~\ref{subsec:projecting-orbits}, the first
pair projects to the closed orbit $\mathcal O_5$, and the second pair to the $7$-dimensional
orbit $\mathcal O_7$. By Lemma~\ref{thm:orbit-count-maximal-flags} there are no other orbits,
hence $R=\mathcal O_5$ and $\Sigma_0\setminus R=\mathcal O_7$. Equivalently,
$\mathcal O_7=\{\pi\in F_{\alpha_2}\mid q|_\pi=0,\ \pi\neq\overline\pi\}$. This proves all the
asserted orbit equations and dimensions.
\end{proof}
On the affine chart $p_{16}\neq0$, every $\pi\in F_{\alpha_2}$ has a unique ordered basis
$(u,v)$ with $(u_1,u_6)=(1,0)$ and $(v_1,v_6)=(0,1)$, since $p_{16}\neq0$ means that the
projection of $\pi$ onto the coordinates with indices $1$ and $6$ is an isomorphism. Write
$u=(s,1,x,y,u_4,u_5,0)$ and $v=(t,0,v_2,v_3,v_4,r,1)$. The components with indices $1$ and $6$
of $u\times v=0$ give $u_5=-t/2$ and $v_2=s/2$, respectively. The components with indices $2$,
$5$, and $0$ then give $u_4=tx/2-s^2/4$, $v_3=-sr/2-t^2/4$, and $v_4=-xr-y-st/4$, and the
remaining components vanish identically. Hence $(s,t,x,y,r)\in\mathbb C^5$ are holomorphic
coordinates on $F_{\alpha_2}\cap\{p_{16}\neq0\}$, and
\begin{equation}\label{eq:G2-alpha2-affine-chart}
\begin{aligned}
u&=(s,1,x,y,\tfrac{tx}{2}-\tfrac{s^2}{4},-\tfrac t2,0),\\
v&=(t,0,\tfrac s2,-\tfrac{sr}{2}-\tfrac{t^2}{4},
-xr-y-\tfrac{st}{4},r,1).
\end{aligned}
\end{equation}

\begin{theorem}\label{thm:local-models-X-alpha2}
Let $\mathcal O_7$ and $\mathcal O_9$ denote the $7$- and $9$-dimensional orbits in the
classification of Theorem~\ref{thm:orbit-equations-X-alpha2}. In the affine coordinates
\eqref{eq:G2-alpha2-affine-chart}, the following hold.
\par
\smallskip
\noindent\textup{(i)}
Let $\pi_7\in\mathcal O_7$ be the plane with $(s,t,x,y,r)=(0,0,i,0,0)$. Introduce holomorphic
coordinates centered at $\pi_7$ by $\xi_1=x-i$, $\xi_2=s$, $w_1=t$, $w_2=r$, and $w_3=y+ir$.
The set $\mathcal O_7\cap\{p_{16}\neq0,\ 1+\operatorname{Im}\xi_1>0\}$ is defined by
\begin{equation}\label{eq:G2-alpha2-O7-local-model}
\begin{aligned}
\operatorname{Im}w_1
&=\tfrac{(\operatorname{Im}\xi_2)^2}{2(1+\operatorname{Im}\xi_1)},\qquad
\operatorname{Im}w_2
=-\tfrac{(\operatorname{Im}\xi_2)^3}{8(1+\operatorname{Im}\xi_1)^2},\\
\operatorname{Im}w_3
&=\tfrac{(\operatorname{Re}\xi_2)(\operatorname{Im}\xi_2)^2}{8(1+\operatorname{Im}\xi_1)}
-\tfrac12(\operatorname{Re}w_1)(\operatorname{Im}\xi_2)
-(\operatorname{Re}w_2)(\operatorname{Im}\xi_1).
\end{aligned}
\end{equation}
These equations define a real-analytic submanifold of $\mathbb C^5$ of real dimension $7$.
\par
\smallskip
\noindent\textup{(ii)}
Let $\pi_9=\langle u_9,v_9\rangle_{\mathbb C}\in\mathcal O_9$, where
$u_9=(i,1,0,0,\tfrac14,0,0)$ and $v_9=(0,0,\tfrac i2,0,0,0,1)$, that is,
$(s,t,x,y,r)=(i,0,0,0,0)$. Introduce holomorphic coordinates centered at $\pi_9$ by
$s=i+\xi_1$, $t=\xi_2$, $x=\xi_3$, $y=\xi_4$, and $r=w$, and set
\begin{equation}\label{eq:G2-alpha2-O9-auxiliary}
\begin{aligned}
A&=(1+\operatorname{Im}\xi_1)^2-2(\operatorname{Im}\xi_2)(\operatorname{Im}\xi_3),\\
B&=(\operatorname{Im}\xi_2)^2+2(1+\operatorname{Im}\xi_1)\operatorname{Im}w,\\
C&=(1+\operatorname{Im}\xi_1)\xi_2-\tfrac{i+\xi_1}{2}\operatorname{Im}\xi_2
+2w\operatorname{Im}\xi_3+2\operatorname{Im}\xi_4.
\end{aligned}
\end{equation}
The set $\mathcal O_9\cap\{p_{16}\neq0,\ A>0\}$ is defined by
\begin{equation}\label{eq:G2-alpha2-O9-local-model}
AB=|C|^2,
\end{equation}
and this equation defines a smooth real-analytic hypersurface of $\mathbb C^5$ in a
neighbourhood of the origin.
\end{theorem}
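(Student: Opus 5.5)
By Theorem~\ref{thm:orbit-equations-X-alpha2}, $\mathcal O_7=\{q|_\pi=0,\ \pi\neq\overline\pi\}$ and $\mathcal O_9=\{\operatorname{sign}(q|_\pi)=(0,1)\}$. The plan is to express these conditions through the Gram matrix $\mathcal H_\pi(u,v)$ of \eqref{Gram_matrix}, evaluated on the distinguished basis \eqref{eq:G2-alpha2-affine-chart}. Write $h_{11}=q(u,u)$, $h_{12}=q(u,v)$, $h_{22}=q(v,v)$. These are real-analytic functions of $(s,t,x,y,r)$; $h_{11},h_{22}$ are real and $h_{21}=\overline{h_{12}}$.

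The first step is a direct substitution using $q(z,w)=\overline z^{\,t}Nw$. Since $u_6=0$ and $v_1=0$, the cross terms simplify considerably. In $h_{11}$ only the pairs $(1,4)$ and $(2,5)$ contribute, so $h_{11}$ is a combination of $|s|^2$, $\operatorname{Re}u_4$ and $\operatorname{Re}(\overline x u_5)$. Because $u_4=tx/2-s^2/4$, everything collapses to an expression in imaginary parts, of the form $h_{11}=-(\operatorname{Im}s)^2+2\operatorname{Im}t\,\operatorname{Im}x$ up to normalisation. Analogously, $h_{22}$ becomes a quadratic in $\operatorname{Im}t$ and $\operatorname{Im}r$ with coefficients involving $s$, and $h_{12}$ is complex. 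The point to verify, analogous to \eqref{eq:G2-alpha1-affine-Hermitian-equation}, is that all real parts cancel against holomorphic terms, because $q$ agrees with the polarisation of $n$ on real vectors and the chart is defined over $\mathbb R$. After the change of variables $s=i+\xi_1$, etc., I expect to find $A=-h_{11}$ (up to a positive factor), together with $B\propto -h_{22}\cdot(\text{positive})$ and $C\propto h_{12}$ after a suitable normalisation. This last identification is the step I expect to need care: the exact normalisations must be matched, possibly after replacing $v$ by $v-\lambda u$ to make the expressions cleaner.

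For (ii), rank one with negative semidefinite signature is equivalent to the following: $\det\mathcal H_\pi=0$ and trace $<0$. On the set where $h_{11}<0$, i.e.\ $A>0$, this reduces to $h_{11}h_{22}=|h_{12}|^2$, which is $AB=|C|^2$ after the identifications above. The condition $A>0$ already forces signature $(0,1)$ rather than $(1,0)$. Smoothness at the origin follows because at $\pi_9$ one has $A=1$, $C=0$, and $B=2\operatorname{Im}w+\dots$. Hence $d(AB-|C|^2)=2\,d\operatorname{Im}w\neq0$, and the implicit function theorem applies. The plane $\pi_9$ lies in $\mathcal O_9$ since $h_{11}=-1$ there.

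For (i), I impose $h_{11}=h_{12}=h_{22}=0$, which gives four real equations. Using $1+\operatorname{Im}\xi_1>0$, I solve them successively. From $h_{11}=0$ I obtain $\operatorname{Im}w_1=\operatorname{Im}t$. Substituting, $h_{22}=0$ gives $\operatorname{Im}w_2=\operatorname{Im}r$, and the real and imaginary parts of $h_{12}=0$ give $\operatorname{Im}w_3$ together with one further equation. That last equation should be implied by the others, just as $C_0,C_2,C_3,C_4$ were redundant in Theorem~\ref{thm:G2-alpha1-explicit-local-models}. Checking this redundancy, which leaves exactly three independent equations and a graph over seven real variables, is the main obstacle; I would verify it by direct substitution. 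The graph structure then gives real dimension $10-3=7$. Finally, the real planes form the closed orbit $\mathcal O_5$ of dimension $5$, which is nowhere dense in this $7$-dimensional graph. Hence the condition $\pi\neq\overline\pi$ only removes a lower-dimensional subset. As stated, (i) should be read with this removal understood, mirroring $\Sigma_7\setminus R$ in the $\alpha_1$ case, since $\mathcal O_7\cup\mathcal O_5=\Sigma_0$ is what the equations literally cut out.
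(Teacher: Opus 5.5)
Your part (ii) is correct and is the paper's argument. With no change of basis one has exactly $q(u,u)=-2A$, $q(v,v)=-2B$ and $q(u,v)=2iC$, so $\det\mathcal H_\pi=4(AB-|C|^2)$. The implicit function argument at the origin is also right. Part (i), however, has a genuine gap in the order of elimination. The substitution gives
\[
h_{11}=-2(\operatorname{Im}s)^2+4(\operatorname{Im}t)(\operatorname{Im}x),\qquad
h_{22}=-2(\operatorname{Im}t)^2-4(\operatorname{Im}s)(\operatorname{Im}r),\qquad
\operatorname{Re}h_{12}=-(\operatorname{Im}s)(\operatorname{Im}t)-4(\operatorname{Im}x)(\operatorname{Im}r).
\]
You plan to solve $h_{22}=0$ for $\operatorname{Im}r$, but its $\operatorname{Im}r$-coefficient is $-4\operatorname{Im}s$, which vanishes at $\pi_7$. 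On $\{\operatorname{Im}s=0\}$, the equation $h_{11}=0$ forces $\operatorname{Im}t=0$. Then $h_{22}=0$ holds for every $\operatorname{Im}r$, whereas $\operatorname{Re}h_{12}=-4(\operatorname{Im}x)(\operatorname{Im}r)$ is generally nonzero. So the equation you expect to be redundant, $\operatorname{Re}h_{12}=0$, is not implied by the others. In fact $\{h_{11}=h_{22}=\operatorname{Im}h_{12}=0\}$ is not a graph near $\pi_7$: it contains a second $7$-dimensional branch through $\pi_7$ on which $\operatorname{Im}r$ is free, and your substitution check would fail. The redundancy runs the other way. Solve $h_{11}=0$ for $\operatorname{Im}t$. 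Solve $\operatorname{Re}h_{12}=0$ for $\operatorname{Im}r=-(\operatorname{Im}s)^3/(8(\operatorname{Im}x)^2)$; here the coefficient is $-4\operatorname{Im}x\neq0$. Solve $\operatorname{Im}h_{12}=0$ for $\operatorname{Im}y$. After these substitutions $h_{22}$ vanishes identically, which is exactly what the paper does.

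Second, you conclude that (i) holds only after discarding real planes, and that the statement must be ``read with this removal understood''. That is wrong, and it comes from missing the observation that makes the statement literally true. Complex conjugation preserves the normalization $(u_1,u_6)=(1,0)$, $(v_1,v_6)=(0,1)$. So if $\pi=\overline\pi$, uniqueness of the normalized basis gives $u=\overline u$ and $v=\overline v$, hence $x\in\mathbb R$. The region $1+\operatorname{Im}\xi_1=\operatorname{Im}x>0$ therefore contains no real planes, and there $\Sigma_0$ coincides with $\mathcal O_7$. Thus the three equations define exactly $\mathcal O_7\cap\{p_{16}\neq0,\ 1+\operatorname{Im}\xi_1>0\}$, with no dimension-counting argument about $\mathcal O_5$ needed.
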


\begin{proof}
We retain the matrix $N$ defined in \eqref{N_matrix}, the cross product defined in
\eqref{eq:G2-cross-product}, and the Hermitian form $q$. Substitution of
\eqref{eq:G2-alpha2-affine-chart} into the Gram matrix \eqref{Gram_matrix} gives
\begin{equation}\label{eq:G2-alpha2-Gram-entries}
\begin{aligned}
q(u,u)
&=-2(\operatorname{Im}s)^2
  +4(\operatorname{Im}t)(\operatorname{Im}x),\\
q(v,v)
&=-2(\operatorname{Im}t)^2
  -4(\operatorname{Im}s)(\operatorname{Im}r),\\
q(u,v)
&=2i\bigl(
(\operatorname{Im}s)t
-\tfrac{s}{2}\operatorname{Im}t
+2r\operatorname{Im}x
+2\operatorname{Im}y
\bigr),
\end{aligned}
\end{equation}
with $q(v,u)=\overline{q(u,v)}$.

(i) Work on the open set $\operatorname{Im}x>0$. No plane in this set is real: indeed, if
$\pi=\overline\pi$, the uniqueness of the normalized basis would imply $u=\overline u$ and
$v=\overline v$, and hence $x\in\mathbb R$. By Theorem~\ref{thm:orbit-equations-X-alpha2},
$\mathcal O_7\cap\{p_{16}\neq0,\ \operatorname{Im}x>0\}$ is therefore characterized by
$q|_\pi=0$, equivalently by $\mathcal H_\pi(u,v)=0$. The equation $q(u,u)=0$ in
\eqref{eq:G2-alpha2-Gram-entries} gives
$\operatorname{Im}t=(\operatorname{Im}s)^2/(2\operatorname{Im}x)$. Moreover,
$\operatorname{Re}q(u,v)=-(\operatorname{Im}s)(\operatorname{Im}t)
-4(\operatorname{Im}x)(\operatorname{Im}r)$, so $\operatorname{Re}q(u,v)=0$ gives
$\operatorname{Im}r=-(\operatorname{Im}s)^3/(8(\operatorname{Im}x)^2)$. After substituting the
expression for $\operatorname{Im}t$, the equation $\operatorname{Im}q(u,v)=0$ becomes
\begin{equation}\label{eq:G2-alpha2-O7-solved-equations}
\operatorname{Im}y
=
\frac{(\operatorname{Re}s)(\operatorname{Im}s)^2}{8\operatorname{Im}x}
-\frac12(\operatorname{Re}t)(\operatorname{Im}s)
-(\operatorname{Re}r)(\operatorname{Im}x).
\end{equation}
Substitution of the resulting expressions for $\operatorname{Im}t$ and $\operatorname{Im}r$
into $q(v,v)$ makes this entry vanish identically. Hence on $\operatorname{Im}x>0$ the
condition $q|_\pi=0$ is equivalent to the three equations above. In the coordinates of
\textup{(i)} one has $s=\xi_2$, $t=w_1$, $x=i+\xi_1$, $r=w_2$, and $y=w_3-iw_2$, so that
$\operatorname{Im}x=1+\operatorname{Im}\xi_1$ and
$\operatorname{Im}y=\operatorname{Im}w_3-\operatorname{Re}w_2$, and the equations become
\eqref{eq:G2-alpha2-O7-local-model}. Their right-hand sides depend only on
$(\xi_1,\xi_2,\operatorname{Re}w_1,\operatorname{Re}w_2,\operatorname{Re}w_3)$ and are
real-analytic on $1+\operatorname{Im}\xi_1>0$, hence they define a real-analytic graph of real
dimension $10-3=7$. At the origin $q|_\pi=0$ and $x=i\notin\mathbb R$, so
$\pi_7\in\mathcal O_7$.

(ii) At $\pi_9$ one has $\mathcal H_{\pi_9}(u_9,v_9)=\operatorname{diag}(-2,0)$. In the
coordinates of \textup{(ii)}, \eqref{eq:G2-alpha2-Gram-entries} and
\eqref{eq:G2-alpha2-O9-auxiliary} give $q(u,u)=-2A$, $q(v,v)=-2B$, and $q(u,v)=2iC$. Hence
$\det\mathcal H_\pi(u,v)=4(AB-|C|^2)$. On the open set $A>0$ one has $q(u,u)<0$, so
$q|_\pi\neq0$, and $\det\mathcal H_\pi(u,v)=0$ if and only if $q|_\pi$ has rank one with a
negative eigenvalue, that is, $\operatorname{sign}(q|_\pi)=(0,1)$. By
Theorem~\ref{thm:orbit-equations-X-alpha2}, this holds exactly on $\mathcal O_9$. This proves
that $\mathcal O_9\cap\{p_{16}\neq0,\ A>0\}$ is defined by
\eqref{eq:G2-alpha2-O9-local-model}. Finally, $A=1$, $B=0$ and $C=0$ at the origin, and
$\partial B/\partial(\operatorname{Im}w)=2$, so
$\partial(AB-|C|^2)/\partial(\operatorname{Im}w)=2\neq0$ there. The real-analytic implicit
function theorem therefore shows that \eqref{eq:G2-alpha2-O9-local-model} defines a smooth real
hypersurface of $\mathbb C^5$ near the origin, of real dimension $9$.

Since $G_{2(2)}$ acts transitively on $\mathcal O_7$ and on $\mathcal O_9$ by biholomorphisms of
$F_{\alpha_2}$, the models hold at every point up to this action.
\end{proof}
\section{$CR$ structures on $\textbf{G}_{2(2)}$-orbits}
In this section, we study the $G_{2(2)}$-invariant $CR$ structures of all the
orbits obtained above. In order to do this, we use the tools defined in Section~\ref{CR_orbits section}. For each orbit, we determine the corresponding $CR$ algebra (see Definition \ref{dfn_CR_alg}) and compute the $k$-nondegeneracy of its Levi form at that point by evaluating the associated subalgebras sequence \eqref{eq:Freeman-filtration-CRalg}. A key step in this analysis is to determine the action of the real conjugation on the Lie algebra $\mathfrak{g}_2$ and, consequently, on its root-space decomposition.
We follow the description of root involutions given in \cite[Sec.~2]{MariniMedoriNacinovich2024}.
Since the Dynkin diagram of type $G_{2}$ has no nontrivial automorphisms,
every automorphism of its root system $\Delta$ belongs to the Weyl group, 
i.e.  $\mathbf{Aut}(\Delta)=W(\Delta)$. Therefore, for every $\sigma$-stable 
Cartan subalgebra $\mathfrak h\subset\mathfrak g$, the involution induced by 
the conjugation $\sigma$ on $\Delta(\mathfrak g,\mathfrak h)$ can be 
identified with an involution of $W$. Recall that $W=\langle s_{\alpha_{1}},s_{\alpha_{2}}\rangle$ is the dihedral group of order $12$, 
with $s_{\alpha_{1}}^{2}=s_{\alpha_{2}}^{2}=1$ and $(s_{\alpha_{1}}s_{\alpha_{2}})^{6}=1$. Its involutions are $\pm\mathrm{I}$
and the six reflections $s_{\gamma}$, with $\gamma$ ranging over the positive roots. Thus there are exactly eight root involutions. For later use, we record their action on the positive simple roots.
The eight involutions of $W$ and their action on $\alpha_{1}$ and $\alpha_{2}$ are given by the following   table 
\begin{table}[H]
\centering
\small
\renewcommand{\arraystretch}{1.15}
\begin{tabular}{c|cc}
$s_{\alpha}$ & $s_{\alpha}(\alpha_{1})$ & $s_{\alpha}(\alpha_{2})$\\
\hline
$\mathrm{I}$ & $\alpha_{1}$ & $\alpha_{2}$\\
$-\mathrm{I}$ & $-\alpha_{1}$ & $-\alpha_{2}$\\
$s_{\alpha_{1}}$ & $-\alpha_{1}$ & $3\alpha_{1}+\alpha_{2}$\\
$s_{\alpha_{2}}$ & $\alpha_{1}+\alpha_{2}$ & $-\alpha_{2}$\\
$s_{\alpha_{1}+\alpha_{2}}$
& $2\alpha_{1}+\alpha_{2}$
& $-3\alpha_{1}-2\alpha_{2}$\\
$s_{2\alpha_{1}+\alpha_{2}}$
& $-\alpha_{1}-\alpha_{2}$
& $\alpha_{2}$\\
$s_{3\alpha_{1}+\alpha_{2}}$
& $-2\alpha_{1}-\alpha_{2}$
& $3\alpha_{1}+2\alpha_{2}$\\
$s_{3\alpha_{1}+2\alpha_{2}}$
& $\alpha_{1}$
& $-3\alpha_{1}-\alpha_{2}$
\end{tabular}
\caption{Action of the root involutions on the positive simple roots.}
\label{tab:involutions-simple-roots}
\end{table}
We now incorporate the parabolic data into the root-theoretic description. With respect to the fixed simple system $\Pi=\{\alpha_{1},\alpha_{2}\}$, a crossed Dynkin diagram is specified by a subset $\Pi_0\subseteq\Pi$, which determines the standard parabolic subalgebra $\mathfrak q_{\Pi_0}$ and its parabolic root set $\mathcal Q_{\Pi_0}$. For the  computations, the crossed diagram must be considered together with the involution induced by the conjugation on the root system. Let $\tilde{\sigma}$ be the conjugation defining the real form  and
its induced involution on the root system. \par
Fix a point $x=gQ\in F=G/Q$,  the $CR$ algebra at this point is $(\mathfrak{g}_0,\mathfrak{q}_x)$. If we denote by $\widetilde{\sigma}_{x}=\operatorname{Ad}(g^{-1})\circ\sigma\circ\operatorname{Ad}(g)$, we obtain the identification of the $CR$ $(\mathfrak{g}_0,\mathfrak{q}_x)$ with $(\mathfrak{g}^{\tilde\sigma_x},\mathfrak{q})$, where $\mathfrak{g}^{\tilde\sigma_x}=\mathrm{Ad}(g^{-1})\mathfrak{g}_0$.\par
Thus, the crossed Dynkin diagram records the parabolic data, whereas the Weyl involution
records the Cartan class and the root-level action of the conjugation. 
\par
\textbf{Notation.}
Following the root-index notation of \cite[Section~6]{Fels2007}, for every pair $(n,m)\in\mathbb Z^{2}$ such that $n\alpha_{1}+m\alpha_{2}\in\Delta$, we set ${\mathfrak g}^{nm}:={\mathfrak g}_{\,n\alpha_{1}+m\alpha_{2}}$, i.e. the superscript $nm$ is merely an abbreviation for the root $n\alpha_{1}+m\alpha_{2}$.
\subsection{$CR$ structures on the $G_{2(2)}$-orbits in $G_{2}/B$}
For the complete flag the crossed Dynkin diagram is
$$\GTwoCrossedDiagram{0}{0}{1}{1}.$$
We use the fixed Borel $B(\Pi)$  with $\Pi=\{\alpha_1,\alpha_2\}$ and corresponding positive roots
$\Delta^+=
\{\alpha_1,\alpha_2,\alpha_1+\alpha_2,2\alpha_1+\alpha_2,
3\alpha_1+\alpha_2,3\alpha_1+2\alpha_2\}.$
\par\medskip
\begin{center}
\begin{tikzpicture}[
    scale=1.7,
    root/.style={circle,fill=black,inner sep=1.4pt},
    guide/.style={gray!45,thin},
    every node/.style={font=\footnotesize}
]

\coordinate (alpha1)          at (0:1);
\coordinate (alpha1+alpha2)   at (120:1);
\coordinate (2alpha1+alpha2)  at (60:1);
\coordinate (-alpha1)         at (180:1);
\coordinate (-alpha1-alpha2)  at (300:1);
\coordinate (-2alpha1-alpha2) at (240:1);

\coordinate (alpha2)           at (150:{sqrt(3)});
\coordinate (3alpha1+alpha2)   at (30:{sqrt(3)});
\coordinate (3alpha1+2alpha2)  at (90:{sqrt(3)});
\coordinate (-alpha2)          at (330:{sqrt(3)});
\coordinate (-3alpha1-alpha2)  at (210:{sqrt(3)});
\coordinate (-3alpha1-2alpha2) at (270:{sqrt(3)});

\draw[guide] (-alpha1) -- (alpha1);
\draw[guide] (-alpha2) -- (alpha2);
\draw[guide] (-alpha1-alpha2) -- (alpha1+alpha2);
\draw[guide] (-2alpha1-alpha2) -- (2alpha1+alpha2);
\draw[guide] (-3alpha1-alpha2) -- (3alpha1+alpha2);
\draw[guide] (-3alpha1-2alpha2) -- (3alpha1+2alpha2);

\foreach \p in {%
    alpha1,alpha2,alpha1+alpha2,2alpha1+alpha2,%
    3alpha1+alpha2,3alpha1+2alpha2,%
    -alpha1,-alpha2,-alpha1-alpha2,-2alpha1-alpha2,%
    -3alpha1-alpha2,-3alpha1-2alpha2%
}{
    \node[root] at (\p) {};
}

\draw[->,thick] (0,0) -- (alpha1);
\draw[->,thick] (0,0) -- (alpha2);
\fill (0,0) circle (0.6pt);
\node[below right=1pt] at (0,0) {\scriptsize $0$};

\node[right=3pt]        at (alpha1)          {$\alpha_1$};
\node[above left=3pt]   at (alpha2)          {$\alpha_2$};
\node[above left=3pt]   at (alpha1+alpha2)   {$\alpha_1+\alpha_2$};
\node[above right=3pt]  at (2alpha1+alpha2)  {$2\alpha_1+\alpha_2$};
\node[above right=3pt]  at (3alpha1+alpha2)  {$3\alpha_1+\alpha_2$};
\node[above=3pt]        at (3alpha1+2alpha2) {$3\alpha_1+2\alpha_2$};

\node[left=3pt]         at (-alpha1)         {$-\alpha_1$};
\node[below right=3pt]  at (-alpha2)         {$-\alpha_2$};
\node[below right=3pt]  at (-alpha1-alpha2)  {$-\alpha_1-\alpha_2$};
\node[below left=3pt]   at (-2alpha1-alpha2) {$-2\alpha_1-\alpha_2$};
\node[below left=3pt]   at (-3alpha1-alpha2) {$-3\alpha_1-\alpha_2$};
\node[below=3pt]        at (-3alpha1-2alpha2){$-3\alpha_1-2\alpha_2$};

\end{tikzpicture}

\smallskip
{\footnotesize The root system of type $G_2$.}
\end{center}
\medskip
Set
\begin{equation*}
\mathfrak q
:=\mathfrak{b}(\Pi)=
\mathfrak h\oplus
 {\mathfrak g}^{10}\oplus
 {\mathfrak g}^{01}\oplus
 {\mathfrak g}^{11}\oplus
 {\mathfrak g}^{21}\oplus
 {\mathfrak g}^{31}\oplus
 {\mathfrak g}^{32}.
\end{equation*}
For each reflection \(s_{\alpha}\), we set $\overline{\mathfrak q}
:=s_{\alpha}(\mathfrak q)
=
\mathfrak h\oplus
\bigoplus_{\gamma\in s_{\alpha}(\Delta^+)}
 {\mathfrak g}_{\gamma}.$ The $CR$ type is computed from $\mathfrak q\cap\overline{\mathfrak q}
=
\mathfrak h\oplus
\bigoplus_{\gamma\in \Delta^+\cap s_{\alpha}(\Delta^+)}
 {\mathfrak g}_{\gamma}.$ Thus, to shorten the computation if $c=|\Delta^+\cap s_{\alpha}(\Delta^+)|,$
then $\dim_{\rm CR}=6-c,
$ and $
\operatorname{codim}_{\rm CR}=c=\dim_{\mathbb C} {\mathfrak g}/(\mathfrak q+\overline{\mathfrak q}
).$ Indeed, $c$ counts only the root spaces in
\(\mathfrak q\cap\overline{\mathfrak q}\), while the Cartan subalgebra
\(\mathfrak h\) has complex dimension \(2\) and is contained in every
\(\mathfrak q\cap\overline{\mathfrak q}\), since  \(\dim_{\mathbb C}\mathfrak q=8\) and
\(\dim_{\mathbb C}(\mathfrak q\cap\overline{\mathfrak q})=2+c\), one has $\dim_{\mathbb C}(\mathfrak q+\overline{\mathfrak q})=14-c.$ We use the notation of \eqref{eq:Freeman-filtration-CRalg} for the decreasing sequence and carry out the computations for the eight cases corresponding to Weyl involutions as follows:
\begin{enumerate}
\item[($\mathrm{I}$)]
In this case $\mathrm{I}(\Delta^+)=\Delta^+.
$ Hence $\mathfrak q\cap\overline{\mathfrak q}=\mathfrak q.$ The $CR$ type is $(0,6).$ This is the compact closed orbit. It is totally real, and the sequence \eqref{eq:Freeman-filtration-CRalg} is constant $\mathfrak q^{(r)}=\mathfrak q
$ for all $ r\geq 0.$
\item[\textup{(-\textrm{I})}]
In this case $-\mathrm{I}(\Delta^+)=-\Delta^+.$ Hence $\mathfrak q\cap\overline{\mathfrak q}=\mathfrak h,
 $ and $
\mathfrak q+\overline{\mathfrak q}= {\mathfrak g}.$ The $CR$ type is $(6,0).$ This is the open, totally complex case. 
\item[($s_{\alpha_1}$)]One has $s_{\alpha_1}(\Delta^+)=
\{3\alpha_1+2\alpha_2,3\alpha_1+\alpha_2,2\alpha_1+\alpha_2,
\alpha_1+\alpha_2,\alpha_2,-\alpha_1\}.$ Thus $\mathfrak q\cap\overline{\mathfrak q}
=
\mathfrak h\oplus
 {\mathfrak g}^{32}\oplus
 {\mathfrak g}^{31}\oplus
 {\mathfrak g}^{21}\oplus
 {\mathfrak g}^{11}\oplus
 {\mathfrak g}^{01}.$ The $CR$ type is $(1,5),$ so the corresponding orbits have real dimension $7$. They are Levi-flat and non-fundamental. The sequence \eqref{eq:Freeman-filtration-CRalg} is constant  $\mathfrak q^{(r)}=\mathfrak q$ for all  $r\geq 0.$
\item[($s_{\alpha_2}$)] One has $s_{\alpha_2}(\Delta^+)=
\{3\alpha_1+2\alpha_2,3\alpha_1+\alpha_2,2\alpha_1+\alpha_2,
\alpha_1+\alpha_2,\alpha_1,-\alpha_2\}.$ Thus $\mathfrak q\cap\overline{\mathfrak q}
=
\mathfrak h\oplus
 {\mathfrak g}^{32}\oplus
 {\mathfrak g}^{31}\oplus
 {\mathfrak g}^{21}\oplus
 {\mathfrak g}^{11}\oplus
 {\mathfrak g}^{10}.$ The $CR$ type is $(1,5),$ so the corresponding orbits again have real dimension $7$. They are
Levi-flat and non-fundamental. The sequence \eqref{eq:Freeman-filtration-CRalg} is constant $\mathfrak q^{(r)}=\mathfrak q
$ for all $ r\geq 0.$
\item[($s_{\alpha_1+\alpha_2}$)]  One has $s_{\alpha_1+\alpha_2}(\Delta^+)=
\{3\alpha_1+\alpha_2,2\alpha_1+\alpha_2,\alpha_1,
-\alpha_2,-\alpha_1-\alpha_2,-3\alpha_1-2\alpha_2\}.$ Thus $\mathfrak q\cap\overline{\mathfrak q}
=
\mathfrak h\oplus
 {\mathfrak g}^{31}\oplus
 {\mathfrak g}^{21}\oplus
 {\mathfrak g}^{10}.$
The $CR$ type is $(3,3),$ so the corresponding orbits have real dimension $9$. They are fundamental  and holomorphically
degenerate. The sequence \eqref{eq:Freeman-filtration-CRalg} is $\mathfrak q^{(r)}=\mathfrak q^{(1)}$ for all $ r\geq 1,$ where $\mathfrak q^{(1)}=
\mathfrak h\oplus
 {\mathfrak g}^{32}\oplus
 {\mathfrak g}^{31}\oplus
 {\mathfrak g}^{21}\oplus
 {\mathfrak g}^{10}.$
\item[($s_{2\alpha_1+\alpha_2}$)] One has $s_{2\alpha_1+\alpha_2}(\Delta^+)=
\{\alpha_2,-\alpha_1,-\alpha_1-\alpha_2,-2\alpha_1-\alpha_2,
-3\alpha_1-\alpha_2,-3\alpha_1-2\alpha_2\}.$ Thus $\mathfrak q\cap\overline{\mathfrak q}
=
\mathfrak h\oplus {\mathfrak g}^{01}.$ The $CR$ type is $(5,1),$ so the corresponding orbits are real hypersurfaces of dimension $11$. They
are fundamental and holomorphically degenerate. The sequence \eqref{eq:Freeman-filtration-CRalg} is
\[
\mathfrak q^{(1)}=
\mathfrak h\oplus
 {\mathfrak g}^{32}\oplus
 {\mathfrak g}^{21}\oplus
 {\mathfrak g}^{11}\oplus
 {\mathfrak g}^{01},
\]
\[
\mathfrak q^{(2)}=
\mathfrak h\oplus
 {\mathfrak g}^{32}\oplus
 {\mathfrak g}^{11}\oplus
 {\mathfrak g}^{01},
\]
and
\[
\mathfrak q^{(3)}=
\mathfrak h\oplus
 {\mathfrak g}^{11}\oplus
 {\mathfrak g}^{01},
\qquad
\mathfrak q^{(r)}=\mathfrak q^{(3)}
\quad \text{for all } r\geq 3.
\]
\item[($s_{3\alpha_1+\alpha_2}$)] One has
\[
s_{3\alpha_1+\alpha_2}(\Delta^+)=
\{3\alpha_1+2\alpha_2,\alpha_1+\alpha_2,\alpha_2,
-\alpha_1,-2\alpha_1-\alpha_2,-3\alpha_1-\alpha_2\}.
\]
Thus $\mathfrak q\cap\overline{\mathfrak q}
=
\mathfrak h\oplus
 {\mathfrak g}^{32}\oplus
 {\mathfrak g}^{11}\oplus
 {\mathfrak g}^{01}.$
The $CR$ type is $(3,3),$ so the corresponding orbits have real dimension $9$. They are fundamental  and  holomorphically
degenerate. The sequence \eqref{eq:Freeman-filtration-CRalg} is
\[
\mathfrak q^{(1)}=
\mathfrak h\oplus
 {\mathfrak g}^{32}\oplus
 {\mathfrak g}^{31}\oplus
 {\mathfrak g}^{21}\oplus
 {\mathfrak g}^{11}\oplus
 {\mathfrak g}^{01},
\]
\[
\mathfrak q^{(2)}=
\mathfrak h\oplus
 {\mathfrak g}^{32}\oplus
 {\mathfrak g}^{21}\oplus
 {\mathfrak g}^{11}\oplus
 {\mathfrak g}^{01},
\qquad
\mathfrak q^{(r)}=\mathfrak q^{(2)}
\quad \text{for all } r\geq 2.
\]
\item[($s_{3\alpha_1+2\alpha_2}$)]  One has $s_{3\alpha_1+2\alpha_2}(\Delta^+)=
\{\alpha_1,-\alpha_2,-\alpha_1-\alpha_2,-2\alpha_1-\alpha_2,
-3\alpha_1-\alpha_2,-3\alpha_1-2\alpha_2\}.$
Thus $\mathfrak q\cap\overline{\mathfrak q}
=
\mathfrak h\oplus {\mathfrak g}^{10}.$
The $CR$ type is $(5,1),$ so the corresponding orbits are real hypersurfaces of dimension $11$. They
are fundamental and holomorphically degenerate. The sequence \eqref{eq:Freeman-filtration-CRalg} is
\[
\mathfrak q^{(1)}=
\mathfrak h\oplus
 {\mathfrak g}^{32}\oplus
 {\mathfrak g}^{31}\oplus
 {\mathfrak g}^{10},
\]
and
\[
\mathfrak q^{(2)}=
\mathfrak h\oplus
 {\mathfrak g}^{31}\oplus
 {\mathfrak g}^{10},
\qquad
\mathfrak q^{(r)}=\mathfrak q^{(2)}
\quad \text{for all } r\geq 2.
\]
\end{enumerate}
Therefore, together with the compact totally real orbit and the open totally complex orbits
(see Section~\ref{sec:explicit-G2-B} for the count of the ten orbits), the complete flag
manifold contains Levi-flat non-fundamental orbits of CR type $(1,5)$, holomorphically
degenerate fundamental orbits of CR type $(3,3)$, and holomorphically degenerate hypersurface
orbits of CR type $(5,1)$.
Following what we have seen so far and the above computation we have
\begin{lemma}
The $CR$ types and degeneracy properties of the ten $G_{2(2)}$-orbits on the complete flag manifold $G_2/B$ are summarized in Table~\ref{tab:complete-flag-coarse-types-count}, where each row corresponds to an involution of Table~\ref{tab:involutions-simple-roots}. 
\begin{table}[H]
\centering
\small
\renewcommand{\arraystretch}{1.15}
\begin{tabular}{c|c|l|c}
involution & $CR$ type & $CR$ structure & number of orbits\\
\hline
$I$ & $(0,6)$ & closed, totally real & $1$\\
$-I$ & $(6,0)$ & open, totally complex & $3$\\
$s_{\alpha_1}$ & $(1,5)$ & Levi-flat, non-fundamental & $1$\\
$s_{\alpha_2}$ & $(1,5)$ & Levi-flat, non-fundamental & $1$\\
$s_{\alpha_1+\alpha_2}$ & $(3,3)$ & fundamental, holomorphically degenerate & $1$\\
$s_{2\alpha_1+\alpha_2}$ & $(5,1)$ & fundamental, holomorphically degenerate & $1$\\
$s_{3\alpha_1+\alpha_2}$ & $(3,3)$ & fundamental, holomorphically degenerate & $1$\\
$s_{3\alpha_1+2\alpha_2}$ & $(5,1)$ & fundamental, holomorphically degenerate & $1$
\end{tabular}
\caption{$CR$ algebra types for the $G_{2(2)}$-orbits on the complete flag manifold $G/B$.}
\label{tab:complete-flag-coarse-types-count}
\end{table}
Moreover, different involutions give pairwise non-isomorphic $CR$ algebras.
\end{lemma}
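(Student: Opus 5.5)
We propose to prove the lemma in three steps: attach an involution to each orbit, read off the CR data from that involution, and then distinguish the CR algebras by invariants.

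\textbf{Step 1: attaching an involution to each orbit.} For each of the ten representatives $\gamma_j$ of Section~\ref{sec:explicit-G2-B}, we use the identification of $(\mathfrak g_0,\mathfrak q_x)$ with $(\mathfrak g^{\tilde\sigma_x},\mathfrak b)$, where $\tilde\sigma_x=\operatorname{Ad}(\gamma_j^{-1})\circ\sigma\circ\operatorname{Ad}(\gamma_j)$. By construction, $\tilde\sigma_x$ preserves the Cartan subalgebra $\operatorname{Ad}(\gamma_j^{-1})\mathfrak h$ up to the identification used by Aomoto. On the roots, its induced action is $\alpha\mapsto\overline{\alpha}$ composed with $w_j$. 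Since $\sigma=-1$ on $\mathfrak h_u$ (all roots are real in the split form), the induced root involution is exactly the Weyl involution $w_j$ computed in Section~\ref{sec:explicit-G2-B}. We therefore read off the correspondence:
\begin{itemize}
\item $\gamma_1\mapsto \mathrm I$;
\item $\gamma_8,\gamma_9,\gamma_{10}\mapsto -\mathrm I$;
\item each of $\gamma_2,\dots,\gamma_7$ to the reflection determined by its $\tilde\sigma_1$-action on $\Delta^+$.
\end{itemize}
As a consistency check, we verify that $|\Delta^+\cap w_j(\Delta^+)|$ reproduces the numbers $m_1$ computed there. Concretely, $m_1=|\Delta^+\cap w(\Delta^+)|$ must equal the CR codimension $c$, giving $\dim=2\cdot(6-c)+c=12-c$, in agreement with $6,7,9,11,12$. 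This also fixes which reflection belongs to which orbit. For instance, $\gamma_4$ has $\Delta^+_{\tilde\sigma_1}=\{\alpha_1\}$ and so corresponds to $s_{3\alpha_1+2\alpha_2}$, while $\gamma_5$ corresponds to $s_{2\alpha_1+\alpha_2}$.

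\textbf{Step 2: CR type, fundamentality and the Freeman sequence.} For each involution we take $\overline{\mathfrak q}=w(\mathfrak b)$ and compute $\mathfrak q\cap\overline{\mathfrak q}$ and $\mathfrak q+\overline{\mathfrak q}$ by root counting. The CR type then follows from \eqref{eq:CRdim-CRalg}. Fundamentality is checked by taking the Lie algebra generated by the root spaces in $\Delta^+\cup w(\Delta^+)$. For $s_{\alpha_1}$ and $s_{\alpha_2}$ this generated algebra is the proper parabolic $\mathfrak q_{\{\alpha_i\}}$, so these orbits are non-fundamental and, since $\mathfrak q+\overline{\mathfrak q}$ is a subalgebra, Levi-flat. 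For the other reflections the roots $-\alpha_1$ and $-\alpha_2$ or their equivalents appear, so the generated algebra is all of $\mathfrak g$. The Freeman sequence \eqref{eq:Freeman-filtration-CRalg} is then computed iteratively using only the root-string rule: $[\mathfrak g_\beta,\mathfrak g_\gamma]=\mathfrak g_{\beta+\gamma}$ whenever $\beta+\gamma\in\Delta$, which holds in $G_2$ with nonzero structure constants. Because $\mathfrak h\subset\mathfrak q\cap\overline{\mathfrak q}$, each $\mathfrak q^{(r)}$ is a sum of $\mathfrak h$ and root spaces. A root $\beta$ of $\mathfrak q^{(r)}$ survives to $\mathfrak q^{(r+1)}$ iff for every $\gamma\in w(\Delta^+)$ with $\beta+\gamma\in\Delta$ we have $\beta+\gamma\in$ (roots of $\mathfrak q^{(r)}$) $\cup\, w(\Delta^+)\cup\{0\}$. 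Iterating until stabilization gives the listed sequences. Holomorphic degeneracy follows whenever the stable term strictly contains $\mathfrak q\cap\overline{\mathfrak q}$, which happens for all four non-flat reflections.

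\textbf{Step 3: pairwise non-isomorphism.} Invariants such as the CR type, fundamentality, and the dimensions of the stable Freeman kernel modulo $\mathfrak q\cap\overline{\mathfrak q}$ already separate most rows. The remaining pairs are:
\begin{itemize}
\item $s_{\alpha_1}$ versus $s_{\alpha_2}$;
\item $s_{\alpha_1+\alpha_2}$ versus $s_{3\alpha_1+\alpha_2}$;
\item $s_{2\alpha_1+\alpha_2}$ versus $s_{3\alpha_1+2\alpha_2}$.
\end{itemize}
For the last two pairs the Freeman sequences have different lengths or dimensions, so they are separated. For $s_{\alpha_1}$ versus $s_{\alpha_2}$ we use the isotropy algebras $\mathfrak i_0=\mathfrak g_0\cap\mathfrak q$: their reductive parts differ by a long versus a short root, equivalently the projections of the two orbits to $F_{\alpha_1}$ and $F_{\alpha_2}$ differ. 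An isomorphism of CR algebras would be induced by an automorphism of $\mathfrak g_{2(2)}$, which is inner because $G_2$ has no diagram automorphisms. An inner automorphism would have to carry one involution to the other while fixing $\mathfrak b$ up to conjugacy, and that is impossible because conjugacy classes of reflections separate long and short roots.

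We expect Step 1 to be the main obstacle. Matching each concrete orbit to its abstract involution carefully, with the left/right conventions of Remark~\ref{rem: from right to left}, is where the real work lies; the remaining steps are finite root-combinatorics.
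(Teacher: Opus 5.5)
Your Steps 1 and 2 follow the paper's approach, and Step 1 makes explicit what the paper's proof leaves to Section~2 and the closing remark. The matching $\gamma_2,\dots,\gamma_7\mapsto s_{\alpha_2},s_{3\alpha_1+\alpha_2},s_{3\alpha_1+2\alpha_2},s_{2\alpha_1+\alpha_2},s_{\alpha_1+\alpha_2},s_{\alpha_1}$ is indeed forced by $\Delta^+_{\tilde\sigma_1}=\Delta^+\cap w(\Delta^+)$, since the six sets $\Delta^+\cap s_\gamma(\Delta^+)$ are pairwise distinct.

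Two corrections to Steps 1 and 2:
\begin{enumerate}
\item The Cartan subalgebra preserved is $\mathfrak h$ itself. Since $\tilde\sigma(\gamma_j)=\gamma_j^{-1}$, one has $\tilde\sigma_x=\operatorname{Ad}(\gamma_j^{-2})\circ\tilde\sigma$, and $\gamma_j^2=\omega a$ normalizes $\mathfrak h$.
\item For $s_{\alpha_1}$, the parabolic $\mathfrak b\oplus\mathfrak g_{-\alpha_1}$ is $\mathfrak q_{\alpha_2}$ in the paper's crossed-node convention, not $\mathfrak q_{\alpha_1}$.
\end{enumerate}
Your survival rule reproduces the listed Freeman sequences. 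It gives $\dim\mathcal K^{(1)}=1,2$ for $s_{\alpha_1+\alpha_2},s_{3\alpha_1+\alpha_2}$ and $3,2$ for $s_{2\alpha_1+\alpha_2},s_{3\alpha_1+2\alpha_2}$. You should add, as the paper does, why these dimensions are invariants: the complexified isomorphism commutes with the conjugations, so it carries $\mathfrak q^{(r)}$ onto $\mathfrak q'^{(r)}$.

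The genuine gap is the pair $s_{\alpha_1}$, $s_{\alpha_2}$ in Step 3, where neither of your arguments works as written.

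\textbf{First argument.} The complexified isotropy algebras $\mathfrak i_i=\mathfrak q\cap\overline{\mathfrak q}_i=\mathfrak h\oplus\bigoplus_{\gamma\in\Delta^+\setminus\{\alpha_i\}}\mathfrak g_\gamma$ lie in $\mathfrak b$. Both are therefore solvable with reductive part $\mathfrak h$, so no long or short root appears there.

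\textbf{Second argument.} An isomorphism only gives $\tilde f\in\operatorname{Aut}(\mathfrak g_2)$ with $\tilde f(\mathfrak b)=\mathfrak b$, that is, $\tilde f=\operatorname{Ad}(\beta)$ with $\beta\in B$. Such a $\beta$ need not normalize $\mathfrak h$, so no Weyl group element relating $s_{\alpha_1}$ and $s_{\alpha_2}$ is produced. If $\beta$ did normalize $\mathfrak h$, it would lie in the torus and force $s_{\alpha_1}=s_{\alpha_2}$ outright. To run your argument you would need that the root involution attached to the CR algebra is independent of the choice of a $\tilde\sigma_x$-stable Cartan subalgebra of $\mathfrak q\cap\overline{\mathfrak q}$, and you do not prove this.

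\textbf{How the paper avoids this.} For the projection $p:\mathfrak b\to\mathfrak h$ one has $p\circ\tilde f=p$. Comparing the characters $\alpha_i\circ p$ by which $\mathfrak i_i$ acts on the line $\mathfrak q/\mathfrak i_i$ then gives $\alpha_1=\alpha_2$, a contradiction.

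\textbf{A repair closer to your idea.} Use the derived algebras. Since $\tilde f$ maps $\mathfrak i_1$ onto $\mathfrak i_2$, it maps $[\mathfrak i_1,\mathfrak i_1]=\bigoplus_{\Delta^+\setminus\{\alpha_1\}}\mathfrak g_\gamma$ onto $[\mathfrak i_2,\mathfrak i_2]=\bigoplus_{\Delta^+\setminus\{\alpha_2\}}\mathfrak g_\gamma$.
\begin{enumerate}
\item The first is $2$-step nilpotent: its only nonzero brackets land in $\mathfrak g^{32}$.
\item The second is $3$-step: $\alpha_1+(\alpha_1+\alpha_2)$ and then $\alpha_1+(2\alpha_1+\alpha_2)$ are roots.
\end{enumerate}
So the two CR algebras are not isomorphic.

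\textbf{Alternative.} The paper's argument for the three open orbits also works here. There, $\tilde f=\operatorname{Ad}(g)$ with $g\in G_2^{\tilde\sigma}=G_{2(2)}$, which puts the two base points in the same orbit. That argument separates all ten CR algebras at once.
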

\begin{proof}
The $CR$ types and the properties in the table are those computed above 
for each involution. For the last assertion, an isomorphism of $CR$ 
algebras $f:(\mathfrak g_0,\mathfrak q)\to(\mathfrak g_0',\mathfrak 
q')$ extends to an automorphism $\tilde f$ of $\mathfrak g$ with 
$\tilde f(\mathfrak q)=\mathfrak q'$ (see \cite{MedoriNacinovich2005}), which commutes with the conjugations , hence $\tilde 
f(\overline{\mathfrak q})=\overline{\mathfrak q'}$ and $\tilde 
f(\mathfrak q^{(r)})=\mathfrak q'^{(r)}$ for every $r$. Therefore the 
$CR$ type and the dimensions of the iterated Levi kernels are 
invariants of the $CR$ algebra. Involutions with different $CR$ type 
are thus distinguished, and it remains to consider the pairs $\{s_{\alpha_1+\alpha_2},s_{3\alpha_1+\alpha_2}\}$, $\{s_{2\alpha_1+\alpha_2},s_{3\alpha_1+2\alpha_2}\}$ and $\{s_{\alpha_1},s_{\alpha_2}\}$. The first two pairs are distinguished by $\dim\mathcal K^{(1)}$, which is $1$ for $s_{\alpha_1+\alpha_2}$ and $2$ for $s_{3\alpha_1+\alpha_2}$, and $3$ for $s_{2\alpha_1+\alpha_2}$ 
and $2$ for $s_{3\alpha_1+2\alpha_2}$. For the pair $\{s_{\alpha_1},s_{\alpha_2}\}$ the sequence \eqref{eq:Freeman-filtration-CRalg} is constant, $\mathfrak q^{(r)}=\mathfrak q$, so these 
invariants coincide. Write $\mathfrak q=\mathfrak h\oplus\mathfrak q^{n}$ 
with $\mathfrak q^n=\bigoplus_{\gamma\in\Delta^+}\mathfrak g_\gamma$, let 
$p:\mathfrak q\to\mathfrak h$ be the projection, and set $\overline{\mathfrak q}_i=s_{\alpha_i}(\mathfrak q)$ and $\mathfrak i_i=\mathfrak q\cap\overline{\mathfrak q}_i=\mathfrak h\oplus\bigoplus_{\gamma\in\Delta^+\setminus\{\alpha_i\}}\mathfrak g_\gamma$. Suppose that $f$ is an isomorphism between the two $CR$ 
algebras. Then $\tilde f(\mathfrak q)=\mathfrak q$ and $\tilde f(\mathfrak i_1)=\mathfrak i_2$.
Every automorphism of $\mathfrak g_2$ is inner, since $\operatorname{Aut}(\mathfrak g_2)=\operatorname{Int}(\mathfrak g_2)\rtimes\Gamma$, where $\Gamma$ is the group of automorphisms of the Dynkin diagram \cite[\S16.5]{Humphreys1972}, which is trivial for $G_2$ because its two nodes correspond to roots of different lengths; moreover, the normalizer of $\mathfrak q$ in $G_2$ is $B$.
Hence $\tilde f=\operatorname{Ad}(\beta)$ with $\beta\in B$. As 
$\operatorname{Ad}(\beta)Y-Y\in\mathfrak q^n$ for $Y\in\mathfrak q$, we 
have $p\circ\tilde f=p$. 
The quotient $\mathfrak q/\mathfrak i_i$ is 
spanned by the class of $X_{\alpha_i}$, and for $Y\in\mathfrak i_i$ one 
has $[Y,X_{\alpha_i}]\equiv\alpha_i(p(Y))X_{\alpha_i}$ modulo 
$\mathfrak i_i$. The map induced by $\tilde f$ between these quotients 
intertwines the actions of $\mathfrak i_1$ and $\mathfrak i_2$, so 
$\alpha_2(p(\tilde f(Y)))=\alpha_1(p(Y))$ for every $Y\in\mathfrak i_1$.
Since $p\circ\tilde f=p$, we have $p(\tilde f(Y))=p(Y)$, so that $\alpha_2(p(Y))=\alpha_1(p(Y))$ for every $Y\in\mathfrak i_1$. As $\mathfrak i_1\supset\mathfrak h$ and $p$ is the identity on $\mathfrak h$, we get $p(\mathfrak i_1)=\mathfrak h$, hence $\alpha_1(H)=\alpha_2(H)$ for every $H\in\mathfrak h$, that is, $\alpha_1=\alpha_2$. This is a contradiction, since $\alpha_1$ and $\alpha_2$ have distinct length. Finally, for the involution $-I$ there are three open, totally
complex orbits, by Lemma~\ref{thm:orbit-count-complete-flag}.
As open subsets of the complex manifold $G_2/B$, they are locally
$CR$ equivalent. Their associated $CR$ algebras, however, are pairwise non-isomorphic. Indeed, if $f$ is an isomorphism between $(\mathfrak g_{2(2)},\mathfrak q_x)$ and $(\mathfrak g_{2(2)},\mathfrak q_y)$, then $\tilde f=\operatorname{Ad}(g)$ with $g\in G_2$ and $\tilde f(\mathfrak q_x)=\mathfrak q_y$. Since $\tilde f$ commutes with $\tilde\sigma$, $\tilde\sigma(g)^{-1}g\in Z(G_2)=\{e\}$, so $g\in G_2^{\tilde\sigma}=G_{2(2)}$, the real points of the simply connected group $G_2$ being connected. As $G_2/B$ is the space of Borel subalgebras, $\operatorname{Ad}(g)\mathfrak q_x=\mathfrak q_y$ gives $y=g\cdot x$, so $x$ and $y$ lie in the same $G_{2(2)}$-orbit.
\end{proof}
\subsection{$CR$ structures on the $G_{2(2)}$-orbits on $G_{2}/Q_{\Pi_0}$}
We now consider the following two distinct  cases ${\Pi_0}\subset \Pi$.\\
$\boxed{\Pi_0 = \{\alpha_1\}}$ We start with the flag $F_{\alpha_1}{=} G/Q_{\alpha_1}$.  for this maximal parabolic the crossed Dynkin diagram is
$$\GTwoCrossedDiagram{0}{0}{1}{0}$$
We keep the positive Borel fixed. Hence $\mathcal Q_{\alpha_1}:=\Delta^+\cup\{-\alpha_2\},$ where $\Delta^+=
\{\alpha_1,\alpha_2,\alpha_1+\alpha_2,2\alpha_1+\alpha_2,
3\alpha_1+\alpha_2,3\alpha_1+2\alpha_2\}.$
The corresponding parabolic subalgebra is
\[
\mathfrak q_{\alpha_1}=
\mathfrak h\oplus
 {\mathfrak g}^{32}\oplus
 {\mathfrak g}^{31}\oplus
 {\mathfrak g}^{21}\oplus
 {\mathfrak g}^{11}\oplus
 {\mathfrak g}^{10}\oplus
 {\mathfrak g}^{01}\oplus
 {\mathfrak g}^{-01}.
\]
For each involution \(s_\alpha\) set
$\overline{\mathfrak q}_{\alpha_1}:=s_\alpha(\mathfrak q_{\alpha_1})=
\mathfrak h\oplus
\bigoplus_{\gamma\in s_\alpha(\mathcal Q_{\alpha_1})} {\mathfrak g}_{\gamma}.$
The $CR$ type is computed from $\mathfrak q_{\alpha_1}\cap\overline{\mathfrak q}_{\alpha_1}
=
\mathfrak h\oplus
\bigoplus_{\gamma\in \mathcal Q_{\alpha_1}\cap s_\alpha(\mathcal Q_{\alpha_1})} {\mathfrak g}_{\gamma}.$
Thus, if $c=|\mathcal Q_{\alpha_1}\cap s_\alpha(\mathcal Q_{\alpha_1})|,$ then $\dim_{\rm CR}\mathcal{O}=7-c,
$ and $
\operatorname{codim}_{\rm CR}\mathcal{O}=c-2.$ Again, the eight cases coming from the involutions of Table~\ref{tab:involutions-simple-roots} are as follows
\begin{enumerate}
\item[(\textrm{I})]  One has $\mathrm{I}(\mathcal Q_{\alpha_1})=
\{3\alpha_1+2\alpha_2, 3\alpha_1+\alpha_2, 2\alpha_1+\alpha_2, \alpha_1+\alpha_2, \alpha_2, \alpha_1, -\alpha_2\}.$ Therefore $\mathfrak q_{\alpha_1}\cap \overline{\mathfrak q}_{\alpha_1}
=
\mathfrak h\oplus
 {\mathfrak g}^{32}\oplus
 {\mathfrak g}^{31}\oplus
 {\mathfrak g}^{21}\oplus
 {\mathfrak g}^{11}\oplus
 {\mathfrak g}^{01}\oplus
 {\mathfrak g}^{10}\oplus
 {\mathfrak g}^{-01}.$ The $CR$ type is $(0,5),$ and the corresponding real dimension is $5$.
This is a closed totally real case. The sequence \eqref{eq:Freeman-filtration-CRalg} is constant $\mathfrak q_{\alpha_1}^{(r)}=\mathfrak q_{\alpha_1}
$ for all $ r\geq 0.$
\item[( -\textrm{I})]  One has $-\mathrm{I}(\mathcal Q_{\alpha_1})=
\{\alpha_2, -\alpha_1, -\alpha_2, -\alpha_1-\alpha_2, -2\alpha_1-\alpha_2, -3\alpha_1-\alpha_2, -3\alpha_1-2\alpha_2\}.$ Therefore $\mathfrak q_{\alpha_1}\cap \overline{\mathfrak q}_{\alpha_1}
=
\mathfrak h\oplus
 {\mathfrak g}^{01}\oplus
 {\mathfrak g}^{-01}.$
The $CR$ type is $(5,0),$ and the corresponding real dimension is $10$. This is an open totally complex case.
\item[($s_{\alpha_1}$)] One has $s_{\alpha_1}(\mathcal Q_{\alpha_1})=
\{3\alpha_1+2\alpha_2, 3\alpha_1+\alpha_2, 2\alpha_1+\alpha_2, \alpha_1+\alpha_2, \alpha_2, -\alpha_1, -3\alpha_1-\alpha_2\}.$ Therefore $\mathfrak q_{\alpha_1}\cap \overline{\mathfrak q}_{\alpha_1}
=
\mathfrak h\oplus
 {\mathfrak g}^{32}\oplus
 {\mathfrak g}^{31}\oplus
 {\mathfrak g}^{21}\oplus
 {\mathfrak g}^{11}\oplus
 {\mathfrak g}^{01}.$ The $CR$ type is $(2,3),$ and the corresponding real dimension is $7$. The sequence \eqref{eq:Freeman-filtration-CRalg} reaches the intersection at step $1$, so the germ is $1$-nondegenerate.$\mathfrak q_{\alpha_1}^{(1)}=
\mathfrak h\oplus
 {\mathfrak g}^{32}\oplus
 {\mathfrak g}^{31}\oplus
 {\mathfrak g}^{21}\oplus
 {\mathfrak g}^{11}\oplus
 {\mathfrak g}^{01}.$ Moreover, $\mathfrak q_{\alpha_1}^{(r)}=\mathfrak q_{\alpha_1}^{(1)}
$ for all $ r\geq 1.$
\item[($s_{\alpha_2}$)] One has $s_{\alpha_2}(\mathcal Q_{\alpha_1})=
\{3\alpha_1+2\alpha_2, 3\alpha_1+\alpha_2, 2\alpha_1+\alpha_2, \alpha_1+\alpha_2, \alpha_2, \alpha_1, -\alpha_2\}.$ Therefore $\mathfrak q_{\alpha_1}\cap \overline{\mathfrak q}_{\alpha_1}
=
\mathfrak h\oplus
 {\mathfrak g}^{32}\oplus
 {\mathfrak g}^{31}\oplus
 {\mathfrak g}^{21}\oplus
 {\mathfrak g}^{11}\oplus
 {\mathfrak g}^{01}\oplus
 {\mathfrak g}^{10}\oplus
 {\mathfrak g}^{-01}.$
The $CR$ type is $(0,5),$ and the corresponding real dimension is $5$.
This is a closed totally real case. The sequence \eqref{eq:Freeman-filtration-CRalg} is constant $\mathfrak q_{\alpha_1}^{(r)}=\mathfrak q_{\alpha_1}
$ for all $r\geq 0.$
\item[($s_{\alpha_1+\alpha_2}$)]  One has $s_{\alpha_1+\alpha_2}(\mathcal Q_{\alpha_1})=
\{3\alpha_1+2\alpha_2, 3\alpha_1+\alpha_2, 2\alpha_1+\alpha_2, \alpha_1, -\alpha_2, -\alpha_1-\alpha_2, -3\alpha_1-2\alpha_2\}.$ Therefore $\mathfrak q_{\alpha_1}\cap \overline{\mathfrak q}_{\alpha_1}
=
\mathfrak h\oplus
 {\mathfrak g}^{32}\oplus
 {\mathfrak g}^{31}\oplus
 {\mathfrak g}^{21}\oplus
 {\mathfrak g}^{10}\oplus
 {\mathfrak g}^{-01}.$ The $CR$ type is $(2,3),$ and the corresponding real dimension is $7$. The sequence \eqref{eq:Freeman-filtration-CRalg} reaches the intersection at step $1$, so the $CR$ algebra is $1$-nondegenerate. $\mathfrak q_{\alpha_1}^{(1)}=
\mathfrak h\oplus
 {\mathfrak g}^{32}\oplus
 {\mathfrak g}^{31}\oplus
 {\mathfrak g}^{21}\oplus
 {\mathfrak g}^{10}\oplus
 {\mathfrak g}^{-01}.$ Moreover, $\mathfrak q_{\alpha_1}^{(r)}=\mathfrak q_{\alpha_1}^{(1)}
$ for all $ r\geq 1.$
\item[ ($s_{2\alpha_1+\alpha_2}$)]  One has $s_{2\alpha_1+\alpha_2}(\mathcal Q_{\alpha_1})=\{\alpha_2, -\alpha_1, -\alpha_2, -\alpha_1-\alpha_2, -2\alpha_1-\alpha_2, -3\alpha_1-\alpha_2, -3\alpha_1-2\alpha_2\}.$ Therefore $\mathfrak q_{\alpha_1}\cap \overline{\mathfrak q}_{\alpha_1}
=\mathfrak h\oplus{\mathfrak g}^{01}\oplus
 {\mathfrak g}^{-01}.$ The $CR$ type is $(5,0),$ and the corresponding real dimension is $10$.
This is an open totally complex case.
\item[($s_{3\alpha_1+\alpha_2}$)]  One has $s_{3\alpha_1+\alpha_2}(\mathcal Q_{\alpha_1})=\{3\alpha_1+2\alpha_2, \alpha_1+\alpha_2, \alpha_2, -\alpha_1, -2\alpha_1-\alpha_2, -3\alpha_1-\alpha_2, -3\alpha_1-2\alpha_2\}.$ Therefore $\mathfrak q_{\alpha_1}\cap \overline{\mathfrak q}_{\alpha_1}
=
\mathfrak h\oplus
 {\mathfrak g}^{32}\oplus
 {\mathfrak g}^{11}\oplus
 {\mathfrak g}^{01}.$
The $CR$ type is $(4,1),$ and the corresponding real dimension is $9$.
This is a hypersurface case. The sequence \eqref{eq:Freeman-filtration-CRalg} reaches the intersection at step $2$, so the $CR$ algebra is $2$-nondegenerate.
\[
\mathfrak q_{\alpha_1}^{(1)}=
\mathfrak h\oplus
 {\mathfrak g}^{32}\oplus
 {\mathfrak g}^{31}\oplus
 {\mathfrak g}^{11}\oplus
 {\mathfrak g}^{01}.
\]
\[
\mathfrak q_{\alpha_1}^{(2)}=
\mathfrak h\oplus
 {\mathfrak g}^{32}\oplus
 {\mathfrak g}^{11}\oplus
 {\mathfrak g}^{01}.
\]
Moreover,
\[
\mathfrak q_{\alpha_1}^{(r)}=\mathfrak q_{\alpha_1}^{(2)}=\mathfrak q_{\alpha_1}\cap \overline{\mathfrak q}_{\alpha_1}
\qquad \text{for all } r\geq 2.
\]
\item[($s_{3\alpha_1+2\alpha_2}$)]One has $s_{3\alpha_1+2\alpha_2}(\mathcal Q_{\alpha_1})=
\{3\alpha_1+\alpha_2, \alpha_1, -\alpha_2, -\alpha_1-\alpha_2, -2\alpha_1-\alpha_2, -3\alpha_1-\alpha_2, -3\alpha_1-2\alpha_2\}.$
Therefore $\mathfrak q_{\alpha_1}\cap \overline{\mathfrak q}_{\alpha_1}
=
\mathfrak h\oplus
 {\mathfrak g}^{31}\oplus
 {\mathfrak g}^{10}\oplus
 {\mathfrak g}^{-01}.$ The $CR$ type is $(4,1),$ and the corresponding real dimension is $9$. This is a hypersurface case. The sequence \eqref{eq:Freeman-filtration-CRalg} reaches the intersection at step $2$, so the $CR$ algebra is $2$-nondegenerate and holomorphically nondegenerate.
\[
\mathfrak q_{\alpha_1}^{(1)}=
\mathfrak h\oplus
 {\mathfrak g}^{32}\oplus
 {\mathfrak g}^{31}\oplus
 {\mathfrak g}^{10}\oplus
 {\mathfrak g}^{-01}.
\]
\[
\mathfrak q_{\alpha_1}^{(2)}=
\mathfrak h\oplus
 {\mathfrak g}^{31}\oplus
 {\mathfrak g}^{10}\oplus
 {\mathfrak g}^{-01}.
\]
Moreover,
\[
\mathfrak q_{\alpha_1}^{(r)}=\mathfrak q_{\alpha_1}^{(2)}=\mathfrak q_{\alpha_1}\cap \overline{\mathfrak q}_{\alpha_1}
\qquad \text{for all } r\geq 2.
\]
\end{enumerate}
The eight models arising from the Weyl involutions represent only five
distinct $G_{2(2)}$-orbits. Indeed, by
Section~\ref{counting_maximal flags}, $F_{\alpha_1}$ contains a unique
closed orbit of real dimension $5$, two open orbits of real dimension
$10$, and one orbit of each of the real dimensions $7$ and $9$.
Consequently, the totally real models associated with $\mathrm I$ and
$s_{\alpha_2}$ represent the closed orbit; the models associated with
$s_{\alpha_1}$ and $s_{\alpha_1+\alpha_2}$ represent the same
$1$-nondegenerate orbit of $CR$ type $(2,3)$; and those associated with
$s_{3\alpha_1+\alpha_2}$ and $s_{3\alpha_1+2\alpha_2}$ represent the
same $2$-nondegenerate hypersurface orbit of $CR$ type $(4,1)$. 
The remaining cases, associated with $-I$ and
$s_{2\alpha_1+\alpha_2}$, have $CR$ type $(5,0)$ and hence
correspond to open orbits. This data alone
does not distinguish the two open $G_{2(2)}$-orbits on
$F_{\alpha_1}$ as shown in Section~3.1.
The two nontrivial identifications can also be realized explicitly.
Let $h\in G_{2(2)}$ be the element defined in
\eqref{eq:alpha1-conjugating-element}, already used in the projection
onto $F_{\alpha_1}$. Since $h\in G_{2(2)}\cap Q_{\alpha_1}$ and
$h\gamma_7h^{-1}=\gamma_6$, $h\gamma_3h^{-1}=\gamma_4$, writing
$\mathfrak q_i:=\operatorname{Ad}(\gamma_i)\mathfrak q_{\alpha_1}$
gives $\operatorname{Ad}(h)\mathfrak q_7=\mathfrak q_6$ and
$\operatorname{Ad}(h)\mathfrak q_3=\mathfrak q_4$. Thus the same real
inner automorphism realizes both identifications described above.
The resulting five geometrically distinct orbits are summarized in
the following table.
\begin{table}[h]
\centering
\small
\renewcommand{\arraystretch}{1.15}
\begin{tabular}{c|l|c}
$CR$ type 
&
$CR$ structure
&
number of orbits
\\
\hline
$(0,5)$
&
closed, totally real
&
$1$
\\
$(2,3)$
&
fundamental, $1$-nondegenerate
&
$1$
\\
$(4,1)$
&
fundamental, $2$-nondegenerate
&
$1$
\\
$(5,0)$
&
open, totally complex
&
$2$
\end{tabular}
\caption{$G_{2(2)}$-orbits on $F_{\alpha_1}=G/Q_{\alpha_1}$.}
\label{tab:Xalpha-CR-summary}
\end{table}
\par
\boxed{{\Pi_0}=\{\alpha_2\}} We move to the partial flag $F_{\alpha_2}$.
For this case the maximal parabolic has crossed Dynkin diagram $$\GTwoCrossedDiagram{0}{0}{0}{1}$$
We still keep the positive Borel fixed. Hence $\mathcal Q_{\alpha_2}=\Delta^+\cup\{-\alpha_1\},$ where
$\Delta^+=
\{\alpha_1,\alpha_2,\alpha_1+\alpha_2,2\alpha_1+\alpha_2,
3\alpha_1+\alpha_2,3\alpha_1+2\alpha_2\}.$
The corresponding parabolic subalgebra is
\[
\mathfrak q_{\alpha_2}=
\mathfrak h\oplus
 {\mathfrak g}^{32}\oplus
 {\mathfrak g}^{31}\oplus
 {\mathfrak g}^{21}\oplus
 {\mathfrak g}^{11}\oplus
 {\mathfrak g}^{01}\oplus
 {\mathfrak g}^{10}\oplus
 {\mathfrak g}^{-10}.
\]
For an involution \(s_\alpha\), set $\overline{\mathfrak q}_{\alpha_2}:=s_\alpha(\mathfrak q_{\alpha_2})=
\mathfrak h\oplus
\bigoplus_{\gamma\in s_\alpha(\mathcal Q_{\alpha_2})} {\mathfrak g}_{\gamma}.$ The $CR$ type is computed from $\mathfrak q_{\alpha_2}\cap\overline{\mathfrak q}_{\alpha_2}
=
\mathfrak h\oplus
\bigoplus_{\gamma\in \mathcal Q_{\alpha_2}\cap s_\alpha(\mathcal Q_{\alpha_2})} {\mathfrak g}_{\gamma}.$ Thus, if $c=|\mathcal Q_{\alpha_2}\cap s_{\alpha}(\mathcal Q_{\alpha_2})|,$ then $\dim_{\rm CR}=7-c,
$ and $\operatorname{codim}_{\rm CR}=c-2.$
\begin{enumerate}
\item[\textup{($\mathrm{I}$)}]  One has $\mathrm{I}(\mathcal Q_{\alpha_2})=
\{3\alpha_1+2\alpha_2, 3\alpha_1+\alpha_2, 2\alpha_1+\alpha_2, \alpha_1+\alpha_2, \alpha_2, \alpha_1, -\alpha_1\}.$ Therefore $\mathfrak q_{\alpha_2}\cap \overline{\mathfrak q}_{\alpha_2}
=
\mathfrak h\oplus
 {\mathfrak g}^{32}\oplus
 {\mathfrak g}^{31}\oplus
 {\mathfrak g}^{21}\oplus
 {\mathfrak g}^{11}\oplus
 {\mathfrak g}^{01}\oplus
 {\mathfrak g}^{10}\oplus
 {\mathfrak g}^{-10}.$ The $CR$ type is $(0,5),$ and the corresponding real dimension is $5$. This is a closed totally real case. The sequence \eqref{eq:Freeman-filtration-CRalg} is constant $\mathfrak q_{\alpha_2}^{(r)}=\mathfrak q_{\alpha_2}
$ for all $ r\geq 0.$
\item[($-\mathrm{I}$)]  One has $-\mathrm{I}(\mathcal Q_{\alpha_2})=
\{\alpha_1, -\alpha_1, -\alpha_2, -\alpha_1-\alpha_2, -2\alpha_1-\alpha_2, -3\alpha_1-\alpha_2, -3\alpha_1-2\alpha_2\}.$ Therefore $\mathfrak q_{\alpha_2}\cap \overline{\mathfrak q}_{\alpha_2}
=
\mathfrak h\oplus
 {\mathfrak g}^{10}\oplus
 {\mathfrak g}^{-10}.$ The $CR$ type is $(5,0),$ and the corresponding real dimension is $10$.
This is an open totally complex case.
\item[($s_{\alpha_1}$)]  One has $s_{\alpha_1}(\mathcal Q_{\alpha_2})=
\{3\alpha_1+2\alpha_2, 3\alpha_1+\alpha_2, 2\alpha_1+\alpha_2, \alpha_1+\alpha_2, \alpha_2, \alpha_1, -\alpha_1\}.$
Therefore $\mathfrak q_{\alpha_2}\cap \overline{\mathfrak q}_{\alpha_2}
=
\mathfrak h\oplus
 {\mathfrak g}^{32}\oplus
 {\mathfrak g}^{31}\oplus
 {\mathfrak g}^{21}\oplus
 {\mathfrak g}^{11}\oplus
 {\mathfrak g}^{01}\oplus
 {\mathfrak g}^{10}\oplus
 {\mathfrak g}^{-10}.$
The $CR$ type is $(0,5),$ and the corresponding real dimension is $5$.
This is a closed totally real case. This is a totally real case of type \((0,5)\). Since
\(s_{\alpha_1}(\mathcal Q_{\alpha_2})=\mathcal Q_{\alpha_2}\), it gives the same
parabolic $CR$ model as the closed case \((\rm{I})\); it should not be counted as a
second closed \(G_{2(2)}\)-orbit on \(F_{\alpha_2}\). The sequence \eqref{eq:Freeman-filtration-CRalg} is constant $\mathfrak q_{\alpha_2}^{(r)}=\mathfrak q_{\alpha_2}$ for all $ r\geq 0.$
\item[($s_{\alpha_2}$)]  One has $s_{\alpha_2}(\mathcal Q_{\alpha_2})=
\{3\alpha_1+2\alpha_2, 3\alpha_1+\alpha_2, 2\alpha_1+\alpha_2, \alpha_1+\alpha_2, \alpha_1, -\alpha_2, -\alpha_1-\alpha_2\}.$
Therefore $\mathfrak q_{\alpha_2}\cap \overline{\mathfrak q}_{\alpha_2}
=
\mathfrak h\oplus
 {\mathfrak g}^{32}\oplus
 {\mathfrak g}^{31}\oplus
 {\mathfrak g}^{21}\oplus
 {\mathfrak g}^{11}\oplus
 {\mathfrak g}^{10}.$
The $CR$ type is $(2,3),$ and the corresponding real dimension is $7$.
The sequence \eqref{eq:Freeman-filtration-CRalg} reaches the intersection at step $2$, so the germ is $2$-nondegenerate. $$\mathfrak q_{\alpha_2}^{(1)}=
\mathfrak h\oplus
 {\mathfrak g}^{32}\oplus
 {\mathfrak g}^{31}\oplus
 {\mathfrak g}^{21}\oplus
 {\mathfrak g}^{11}\oplus
 {\mathfrak g}^{01}\oplus
 {\mathfrak g}^{10}.$$
$$\mathfrak q_{\alpha_2}^{(2)}=
\mathfrak h\oplus
 {\mathfrak g}^{32}\oplus
 {\mathfrak g}^{31}\oplus
 {\mathfrak g}^{21}\oplus
 {\mathfrak g}^{11}\oplus
 {\mathfrak g}^{10}.$$
Moreover, $\mathfrak q_{\alpha_2}^{(r)}=\mathfrak q_{\alpha_2}^{(2)}
$for all $ r\geq 2.$
\item[($s_{\alpha_1+\alpha_2}$)] One has $s_{\alpha_1+\alpha_2}(\mathcal Q_{\alpha_2})=\{3\alpha_1+\alpha_2, 2\alpha_1+\alpha_2, \alpha_1, -\alpha_2, -\alpha_1-\alpha_2, -2\alpha_1-\alpha_2, -3\alpha_1-2\alpha_2\}.$
Therefore $\mathfrak q_{\alpha_2}\cap \overline{\mathfrak q}_{\alpha_2}
=
\mathfrak h\oplus
 {\mathfrak g}^{31}\oplus
 {\mathfrak g}^{21}\oplus
 {\mathfrak g}^{10}.$
The $CR$ type is $(4,1),$ and the corresponding real dimension is $9$.
This is a hypersurface case. The sequence \eqref{eq:Freeman-filtration-CRalg} reaches the intersection at step $3$, so the germ is $3$-nondegenerate and holomorphically nondegenerate.
\[
\mathfrak q_{\alpha_2}^{(1)}=
\mathfrak h\oplus
 {\mathfrak g}^{32}\oplus
 {\mathfrak g}^{31}\oplus
 {\mathfrak g}^{21}\oplus
 {\mathfrak g}^{11}\oplus
 {\mathfrak g}^{10}.
\]
\[
\mathfrak q_{\alpha_2}^{(2)}=
\mathfrak h\oplus
 {\mathfrak g}^{32}\oplus
 {\mathfrak g}^{31}\oplus
 {\mathfrak g}^{21}\oplus
 {\mathfrak g}^{10}.
\]
\[
\mathfrak q_{\alpha_2}^{(3)}=
\mathfrak h\oplus
 {\mathfrak g}^{31}\oplus
 {\mathfrak g}^{21}\oplus
 {\mathfrak g}^{10}.
\]
Moreover, $\mathfrak q_{\alpha_2}^{(r)}=\mathfrak q_{\alpha_2}^{(3)}
 $ for all $r\geq 3.$
\item[($s_{2\alpha_1+\alpha_2}$)]  One has $s_{2\alpha_1+\alpha_2}(\mathcal Q_{\alpha_2})=\{\alpha_1+\alpha_2, \alpha_2, -\alpha_1, -\alpha_1-\alpha_2, -2\alpha_1-\alpha_2, -3\alpha_1-\alpha_2, -3\alpha_1-2\alpha_2\}.$
Therefore $\mathfrak q_{\alpha_2}\cap \overline{\mathfrak q}_{\alpha_2}
=
\mathfrak h\oplus
 {\mathfrak g}^{11}\oplus
 {\mathfrak g}^{01}\oplus
 {\mathfrak g}^{-10}.$ The $CR$ type is $(4,1),$ and the corresponding real dimension is $9$.
This is a hypersurface case. The sequence \eqref{eq:Freeman-filtration-CRalg} reaches the intersection at step $3$, so the germ is $3$-nondegenerate and holomorphically nondegenerate.
\[
\mathfrak q_{\alpha_2}^{(1)}=
\mathfrak h\oplus
 {\mathfrak g}^{32}\oplus
 {\mathfrak g}^{21}\oplus
 {\mathfrak g}^{11}\oplus
 {\mathfrak g}^{01}\oplus
 {\mathfrak g}^{-10}.
\]
\[
\mathfrak q_{\alpha_2}^{(2)}=
\mathfrak h\oplus
 {\mathfrak g}^{32}\oplus
 {\mathfrak g}^{11}\oplus
 {\mathfrak g}^{01}\oplus
 {\mathfrak g}^{-10}.
\]
\[
\mathfrak q_{\alpha_2}^{(3)}=
\mathfrak h\oplus
 {\mathfrak g}^{11}\oplus
 {\mathfrak g}^{01}\oplus
 {\mathfrak g}^{-10}.
\]
Moreover, $\mathfrak q_{\alpha_2}^{(r)}=\mathfrak q_{\alpha_2}^{(3)}
$ for all $ r\geq 3.$
\item[($s_{3\alpha_1+\alpha_2}$)] One has $s_{3\alpha_1+\alpha_2}(\mathcal Q_{\alpha_2})=
\{3\alpha_1+2\alpha_2, 2\alpha_1+\alpha_2, \alpha_1+\alpha_2, \alpha_2, -\alpha_1, -2\alpha_1-\alpha_2, -3\alpha_1-\alpha_2\}.$
Therefore $\mathfrak q_{\alpha_2}\cap \overline{\mathfrak q}_{\alpha_2}
=
\mathfrak h\oplus
 {\mathfrak g}^{32}\oplus
 {\mathfrak g}^{21}\oplus
 {\mathfrak g}^{11}\oplus
 {\mathfrak g}^{01}\oplus
 {\mathfrak g}^{-10}.$
The $CR$ type is $(2,3),$ and the corresponding real dimension is $7$.
The sequence \eqref{eq:Freeman-filtration-CRalg} reaches the intersection at step $2$, so the germ is $2$-nondegenerate.
\[
\mathfrak q_{\alpha_2}^{(1)}=
\mathfrak h\oplus
 {\mathfrak g}^{32}\oplus
 {\mathfrak g}^{31}\oplus
 {\mathfrak g}^{21}\oplus
 {\mathfrak g}^{11}\oplus
 {\mathfrak g}^{01}\oplus
 {\mathfrak g}^{-10}.
\]
\[
\mathfrak q_{\alpha_2}^{(2)}=
\mathfrak h\oplus
 {\mathfrak g}^{32}\oplus
 {\mathfrak g}^{21}\oplus
 {\mathfrak g}^{11}\oplus
 {\mathfrak g}^{01}\oplus
 {\mathfrak g}^{-10}.
\]
Moreover, $\mathfrak q_{\alpha_2}^{(r)}=\mathfrak q_{\alpha_2}^{(2)}
$ for all $ r\geq 2.$
\item[($s_{3\alpha_1+2\alpha_2}$)]  One has $s_{3\alpha_1+2\alpha_2}(\mathcal Q_{\alpha_2})=
\{\alpha_1, -\alpha_1, -\alpha_2, -\alpha_1-\alpha_2, -2\alpha_1-\alpha_2, -3\alpha_1-\alpha_2, -3\alpha_1-2\alpha_2\}.$ Therefore $\mathfrak q_{\alpha_2}\cap \overline{\mathfrak q}_{\alpha_2}
=
\mathfrak h\oplus
 {\mathfrak g}^{10}\oplus
 {\mathfrak g}^{-10}.$
The $CR$ type is $(5,0),$ and the corresponding real dimension is $10$. This is an open totally complex case.
\end{enumerate}

The eight models arising from the Weyl involutions represent only five
distinct $G_{2(2)}$-orbits. Indeed, by
Section~\ref{counting_maximal flags}, $F_{\alpha_2}$ contains a unique
closed orbit of real dimension $5$, two open orbits of real dimension
$10$, and one orbit of each of the real dimensions $7$ and $9$.
Consequently, the totally real models associated with $\mathrm I$ and
$s_{\alpha_1}$ represent the closed orbit; the models associated with
$s_{\alpha_2}$ and $s_{3\alpha_1+\alpha_2}$ represent the same
$2$-nondegenerate orbit of $CR$ type $(2,3)$; and those associated with
$s_{\alpha_1+\alpha_2}$ and $s_{2\alpha_1+\alpha_2}$ represent the
same $3$-nondegenerate hypersurface orbit of $CR$ type $(4,1)$. 
The remaining root-theoretic cases, associated with $-I$ and
$s_{3\alpha_1+2\alpha_2}$, have $CR$ type $(5,0)$ and hence
correspond to open orbits. These root-theoretic data alone
do not distinguish the two open $G_{2(2)}$-orbits on
$F_{\alpha_2}$, which are characterized by the signatures
$(1,1)$ and $(0,2)$ of $q|_\pi$, respectively, as shown
in Section~3.2.
The two nontrivial identifications can also be realized explicitly.
Let $g\in G_{2(2)}$ be the element defined in
\eqref{eq:alpha2-conjugating-element}, already used in the projection
onto $F_{\alpha_2}$. Since $g\in G_{2(2)}\cap Q_{\alpha_2}$ and
$g\gamma_2g^{-1}=\gamma_3$, $g\gamma_6g^{-1}=\gamma_5$, writing
$\mathfrak q_i=\operatorname{Ad}(\gamma_i)\mathfrak q_{\alpha_2}$
gives $\operatorname{Ad}(g)\mathfrak q_2=\mathfrak q_3$ and
$\operatorname{Ad}(g)\mathfrak q_6=\mathfrak q_5$. 
\begin{remark}\label{rem:G2-three-nondegenerate-known-example}
The  $3$-nondegenerate $CR$-algebra was already presented in
\cite[Example~2.14]{MariniMedoriNacinovich2020}.
Indeed, writing the simple roots used there as
$a_1=e_1-e_2$ and $a_2=2e_2-e_1-e_3$, the conjugation
interchanging $e_1$ and $e_3$ and fixing $e_2$ induces
the reflection $s_{2a_1+a_2}$, since
$e_1-e_3=2a_1+a_2$.
The parabolic subalgebra corresponds to the crossed
long root $a_2$.
Under the identification $a_i\mapsto-\alpha_i$, which
accounts for the opposite choice of Borel subalgebra,
these data agree with
$\mathfrak q_{\alpha_2}$, and conjugation $s_{2\alpha_1+\alpha_2}$
in our notation.
\end{remark}
Thus, again  using also the result of section \ref{counting_maximal flags} for the partial parabolic flag \(  G/Q_{\alpha_2}\),
we summarize
\begin{table}[H]
\centering
\small
\renewcommand{\arraystretch}{1.15}
\begin{tabular}{c|l|c}
$CR$ type 
&
$CR$ structure
&
number of orbits
\\
\hline
$(0,5)$
&
closed, totally real
&
$1$
\\
$(2,3)$
&
fundamental, $2$-nondegenerate
&
$1$
\\
$(4,1)$
&
fundamental, $3$-nondegenerate
&
$1$
\\
$(5,0)$
&
open, totally complex
&
$2$
\end{tabular}
\caption{
$G_{2(2)}$-orbits on $F_{\alpha_2}=G/Q_{\alpha_2}$.}
\label{tab:Xbeta-CR-summary}
\end{table}
\par
\par
The computations of the sequence \eqref{eq:Freeman-filtration-CRalg} for the $CR$ algebras $(\mathfrak g_0,\mathfrak q)$ carried out above  determine the 
dimensions of the iterated Levi kernels setting  $\dim_{\mathbb C}\mathcal K^{(r)}_o=\dim_{\mathbb C}\mathfrak q^{(r)}/(\mathfrak q\cap\overline{\mathfrak q})$ for every $r\ge0$. Since the orbits are 
homogeneous, these dimensions, and hence the order of nondegeneracy, do not 
depend on the point (see Section~\ref{CR_orbits section}), so it suffices to 
compute them at the base point. On each of the flags $F_{\alpha_1}$ and 
$F_{\alpha_2}$ the closed orbit is totally real and the two open orbits are 
totally complex, hence they are of no interest for the study of $CR$ 
geometries. For the rest of the orbits  $\mathcal O_7$ and $\mathcal O_9$ of 
the two flags we have:
\begin{proposition}\label{prop:Levi-kernel-dimensions}
Given the flags $F_{\alpha_1}$ and $F_{\alpha_2}$ of Section~\ref{counting_maximal flags}, for their orbits $\mathcal O_7$ and $\mathcal O_9$ we have:
\begin{enumerate}
\item[\textup{(i)}] On $F_{\alpha_1}$, the orbit $\mathcal O_7$ is $1$-nondegenerate of $CR$ type $(2,3)$, that is, $\mathcal K^{(1)}_o=\{0\}$. The orbit $\mathcal O_9$ is $2$-nondegenerate of hypersurface type, with $CR$ dimension $4$, and $\dim_{\mathbb C}\mathcal K^{(1)}_o=1$, $\mathcal K^{(2)}_o=\{0\}$.
\item[\textup{(ii)}] On $F_{\alpha_2}$, the orbit $\mathcal O_7$ is $2$-nondegenerate of $CR$ type $(2,3)$, with $\dim_{\mathbb C}\mathcal K^{(1)}_o=1$ and $\mathcal K^{(2)}_o=\{0\}$. The orbit $\mathcal O_9$ is $3$-nondegenerate of hypersurface type, with $CR$ dimension $4$, and $\dim_{\mathbb C}\mathcal K^{(1)}_o=2$, $\dim_{\mathbb C}\mathcal K^{(2)}_o=1$, $\mathcal K^{(3)}_o=\{0\}$.
\end{enumerate}
\end{proposition}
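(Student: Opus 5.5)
The plan is to reduce everything to the root-theoretic Freeman sequences computed above for the Weyl involutions. Concretely, I will use the formula $\dim_{\mathbb C}\mathcal K^{(r)}_o=\dim_{\mathbb C}\mathfrak q^{(r)}-\dim_{\mathbb C}(\mathfrak q\cap\overline{\mathfrak q})$, which amounts to counting root spaces.

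The first step is to attach to each orbit $\mathcal O_7,\mathcal O_9$ a base point whose CR algebra is one of the models $(\mathfrak g^{\tilde\sigma_x},\mathfrak q_{\alpha_i})$ with $\tilde\sigma_x$ inducing a reflection. By Lemma~\ref{thm:orbit-count-maximal-flags} and the projections in Section~\ref{subsec:projecting-orbits}, the points $\gamma_jQ_{\alpha_i}$ are representatives, and $\operatorname{Ad}(\gamma_j^{-1})\circ\tilde\sigma\circ\operatorname{Ad}(\gamma_j)=\operatorname{Ad}(\gamma_j^{-2})\tilde\sigma$ acts on $\mathfrak h$ through $w_j$. Each $w_j$ is the reflection listed in Table~\ref{tab:involutions-simple-roots}, as read off from the $\tilde\sigma_1$-computations in Section~\ref{sec:explicit-G2-B}.

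I then match dimensions: the real dimension of the orbit equals $14-\dim(\mathfrak q\cap\overline{\mathfrak q})\cdot$(real count), i.e. $2\dim_{\rm CR}+\operatorname{codim}_{\rm CR}$. This shows the following:
\begin{enumerate}
\item On $F_{\alpha_1}$, the $7$-dimensional orbit corresponds to $s_{\alpha_1}$ or $s_{\alpha_1+\alpha_2}$, of type $(2,3)$, and the $9$-dimensional orbit corresponds to $s_{3\alpha_1+\alpha_2}$ or $s_{3\alpha_1+2\alpha_2}$, of type $(4,1)$.
\item On $F_{\alpha_2}$, the $7$-dimensional orbit corresponds to $s_{\alpha_2}$ or $s_{3\alpha_1+\alpha_2}$, and the $9$-dimensional orbit corresponds to $s_{\alpha_1+\alpha_2}$ or $s_{2\alpha_1+\alpha_2}$.
\end{enumerate}
Since each dimension occurs for a unique orbit, the identification is forced. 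Both models in each pair give the same answer, which is consistent.

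Next I count, using the sequences listed above. For $F_{\alpha_1}$ with $s_{\alpha_1}$: $\mathfrak q^{(1)}=\mathfrak q\cap\overline{\mathfrak q}$, so $\mathcal K^{(1)}=0$. For $s_{3\alpha_1+\alpha_2}$: $\mathfrak q^{(1)}$ has one extra root space $\mathfrak g^{31}$, so the kernel dimension is $1$, and $\mathfrak q^{(2)}=\mathfrak q\cap\overline{\mathfrak q}$. For $F_{\alpha_2}$ with $s_{\alpha_2}$: one extra root space $\mathfrak g^{01}$ in $\mathfrak q^{(1)}$, and $\mathfrak q^{(2)}=\mathfrak q\cap\overline{\mathfrak q}$. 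For $s_{\alpha_1+\alpha_2}$: $\mathfrak q^{(1)}$ has the extras $\mathfrak g^{32},\mathfrak g^{11}$ (dimension $2$), $\mathfrak q^{(2)}$ has $\mathfrak g^{32}$ (dimension $1$), and $\mathfrak q^{(3)}=\mathfrak q\cap\overline{\mathfrak q}$. Homogeneity then transfers the result from the base point to the whole orbit.

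The main obstacle is validating the Freeman steps themselves. Membership of $X_\beta\in\mathfrak q^{(r)}$ requires $[X_\beta,\mathfrak g_\gamma]\subset\mathfrak q^{(r-1)}+\overline{\mathfrak q}$ for all $\gamma$ in $\overline{\mathfrak q}$'s root set. Nonmembership needs a witness $\gamma$ with $\beta+\gamma$ a root outside $\mathfrak q^{(r-1)}\cup s(\mathcal Q)$, using $N_{\beta,\gamma}\neq0$ whenever $\beta+\gamma\in\Delta$. I would verify each listed witness explicitly, for instance $\beta=\alpha_1+\alpha_2$ against $\gamma=-\alpha_1-\alpha_2$ in the $3$-nondegenerate case.

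A further subtlety is that $\overline{\mathfrak q}$ need not be $\mathfrak h$-graded exactly as $s(\mathfrak q)$ unless the base point's conjugation preserves each root line up to the reflection. In this split setting that holds because the Cartan subalgebra is $\tilde\sigma_x$-stable.
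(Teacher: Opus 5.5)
Your proposal is correct and follows essentially the same route as the paper. The paper also identifies $\mathcal O_7,\mathcal O_9$ with the reflection models through orbit dimensions (and the involutions attached to the representatives $\gamma_j$), then reads off $\dim_{\mathbb C}\mathcal K^{(r)}_o=\dim_{\mathbb C}\mathfrak q^{(r)}/(\mathfrak q\cap\overline{\mathfrak q})$ from the root-theoretic Freeman sequences it computed case by case.

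One small slip concerns your sample witness $\beta=\alpha_1+\alpha_2$, $\gamma=-\alpha_1-\alpha_2$ in the $s_{\alpha_1+\alpha_2}$ model on $F_{\alpha_2}$. It fails, because the bracket lands in $\mathfrak h\subset\mathfrak q\cap\overline{\mathfrak q}$. The correct witness for $\mathfrak g^{11}\not\subset\mathfrak q^{(2)}$ is $\gamma=-2\alpha_1-\alpha_2$: the bracket lies in $\mathfrak g_{-\alpha_1}\not\subset\mathfrak q^{(1)}+\overline{\mathfrak q}$. The same $\gamma$ also shows $\mathfrak g^{32}\not\subset\mathfrak q^{(3)}$, via $\mathfrak g_{\alpha_1+\alpha_2}\not\subset\mathfrak q^{(2)}+\overline{\mathfrak q}$.
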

The dimension of $\mathcal K^{(1)}_o$ in Proposition~\ref{prop:Levi-kernel-dimensions} is consistent with the Levi form at the origin computed from the local equations of the orbits given in Theorems~\ref{thm:G2-alpha1-explicit-local-models} and~\ref{thm:local-models-X-alpha2}. In each of \eqref{eq:G2-alpha1-O7-local-model}, \eqref{eq:G2-alpha1-O9-local-model}, \eqref{eq:G2-alpha2-O7-local-model} and \eqref{eq:G2-alpha2-O9-local-model} the first derivatives of the defining functions vanish at the origin, except those of $\operatorname{Im}w$. Hence the complex tangent space at the origin is $\{dw=0\}$, and the Levi form is given by the coefficients of $\xi_a\overline{\xi_b}$ in the terms of the defining functions depending only on $\xi$ (and on $\zeta$ in \eqref{eq:G2-alpha1-O9-local-model}), that is, by the Hermitian part of their quadratic terms. For \eqref{eq:G2-alpha1-O7-local-model}, the Hermitian parts of $\operatorname{Re}(\xi_1\overline{\xi_2})$ and $\operatorname{Im}(\xi_1\overline{\xi_2})$ are nondegenerate, so the Levi form has trivial kernel. For the hypersurface \eqref{eq:G2-alpha1-O9-local-model}, $\zeta$ does not appear in the quadratic part, and the Hermitian part of $(\operatorname{Im}\xi_1)^2+(\operatorname{Im}\xi_2)^2-(\operatorname{Im}\xi_3)^2$ is $\tfrac12(|\xi_1|^2+|\xi_2|^2-|\xi_3|^2)$; hence the Levi form is nondegenerate of signature $(2,1)$, up to sign, in the variables $\xi_1,\xi_2,\xi_3$, and its kernel is spanned by $\partial_\zeta$. For \eqref{eq:G2-alpha2-O7-local-model}, the second equation is of order three in $\xi$ and the third has no quadratic term depending only on $\xi$, so the Levi form is given by the Hermitian part $\tfrac14|\xi_2|^2$ of $(\operatorname{Im}\xi_2)^2/2$, and its kernel is spanned by $\partial_{\xi_1}$. For the hypersurface \eqref{eq:G2-alpha2-O9-local-model}, the quadratic part of $AB-|C|^2$ on $\{w=0\}$ is $(\operatorname{Im}\xi_2)^2-|\xi_2-\tfrac{i}{2}\operatorname{Im}\xi_2+2\operatorname{Im}\xi_4|^2$, which does not involve $\xi_1,\xi_3$; the coefficients of $\xi_a\overline{\xi_b}$ in the variables $\xi_2,\xi_4$ form the matrix $\begin{pmatrix}-1/8&-i\\ i&-2\end{pmatrix}$, of determinant $-3/4$. Hence the Levi form has signature $(1,1)$, up to sign, in $\xi_2,\xi_4$, and its kernel is spanned by $\partial_{\xi_1},\partial_{\xi_3}$.

\begin{remark}\label{rem:comparison-literature}
The $2$-nondegenerate orbits of Proposition~\ref{prop:Levi-kernel-dimensions} are $\mathcal O_9\subset F_{\alpha_1}$ and $\mathcal O_7\subset F_{\alpha_2}$. By \eqref{eq:G2-alpha1-O9-local-model}, $\mathcal O_9\subset F_{\alpha_1}$ is an open subset of a tube over a translate of the quadric cone $a_1^2+a_2^2-a_3^2=ut$ in $\mathbb R^5$, where $a_j=\operatorname{Im}\xi_j$, $u=\operatorname{Im}w$ and $t=1+\operatorname{Im}\zeta$. The invariants of these two orbits listed above are also those of two models of \cite{Gregorovic2021} lying in the same flag varieties: the entry $\mathfrak{so}(p+2,q+2)$ with $p+q=3$ of \cite[Table~1, Section~5.3]{Gregorovic2021} for $\mathcal O_9$, and the entry $(\mathfrak g_{2(2)},\{\alpha_1\},\{\alpha_2\})$ of \cite[Table~2]{Gregorovic2021} for $\mathcal O_7$. We do not prove that these orbits are locally $CR$ equivalent to the models, since this requires a finer comparison than the $CR$ type and the Levi kernels, and we do not determine their algebras of infinitesimal $CR$ automorphisms. We only note that $\mathcal O_9\subset F_{\alpha_1}$ is open in an $SO(3,4)$-orbit, hence its symmetry algebra contains $\mathfrak{so}(3,4)\supsetneq\mathfrak g_{2(2)}$.
\end{remark}

\begin{remark}
The natural projections $\pi_j:G_2/B\to G_2/Q_{\alpha_j}$, $j=1,2$, have fibers isomorphic to $\mathbb{CP}^1$, but the restriction of $\pi_j$ to an orbit $\mathcal O(\gamma)$ has fibers of real dimension $\dim\mathcal O(\gamma)-\dim\pi_j(\mathcal O(\gamma))$, which by Section~\ref{subsec:projecting-orbits} is $0$, $1$ or $2$. Whenever such a restriction is a $CR$ fibration with complex fibers and a $k$-nondegenerate base, it realizes the $k$-nondegenerate reduction, see \cite[Proposition~6.3 and Theorem~6.4]{AltomaniMedoriNacinovich2010}, where $k$-nondegenerate was called \emph{weakly nondegenerate}. At the level of $CR$ algebras, the reduction discards the part of the structure on which the sequence \eqref{eq:Freeman-filtration-CRalg} stabilizes at a nontrivial term, i.e.  setting
$$\mathfrak q^{(\infty)}=\bigcap_{r\ge1}\mathfrak q^{(r)},
\qquad
\mathcal K^{(\infty)}_o=\bigcap_{r\ge1}\mathcal K^{(r)}_o\simeq\mathfrak q^{(\infty)}/(\mathfrak q\cap\overline{\mathfrak q}),$$
if $\mathcal K^{(\infty)}_o\neq\{0\}$, that is, if $\mathfrak q^{(\infty)}$ strictly contains $\mathfrak q\cap\overline{\mathfrak q}$, the orbit is holomorphically degenerate, and the reduction is 
$\mathfrak q'=\mathfrak q+\overline{\mathfrak q^{(\infty)}}$. 
Equivalently, $\mathfrak q'$ is the largest complex Lie subalgebra 
satisfying $\mathfrak q\subseteq\mathfrak q'\subseteq\mathfrak q+\overline{\mathfrak q}$. For the six intermediate orbits of $G_2/B$ 
the sequence stabilizes at a kernel of complex dimension one, spanned 
by a root space. The action of $\tilde\sigma_1$ on the roots computed 
in Section~\ref{sec:explicit-G2-B} attaches to 
$\gamma_2,\dots,\gamma_7$ the involutions $s_{\alpha_2}$, 
$s_{3\alpha_1+\alpha_2}$, $s_{3\alpha_1+2\alpha_2}$, 
$s_{2\alpha_1+\alpha_2}$, $s_{\alpha_1+\alpha_2}$ and $s_{\alpha_1}$ of 
Table~\ref{tab:involutions-simple-roots}, respectively. The stable 
kernels are $\mathfrak g_{\alpha_2}$, $\mathfrak g_{2\alpha_1+\alpha_2}$, $\mathfrak g_{3\alpha_1+\alpha_2}$, $\mathfrak g_{\alpha_1+\alpha_2}$, $\mathfrak g_{3\alpha_1+2\alpha_2}$, $\mathfrak g_{\alpha_1}$, and their conjugates are $\mathfrak g_{-\alpha_2}$, $\mathfrak g_{-\alpha_1}$, $\mathfrak g_{-\alpha_2}$, $\mathfrak g_{-\alpha_1}$, $\mathfrak g_{-\alpha_2}$, $\mathfrak g_{-\alpha_1}$. Hence $\mathfrak q'=\mathfrak q+\mathfrak g_{-\alpha_2}=\mathfrak q_{\alpha_1}$ for $\gamma_2,\gamma_4,\gamma_6$ and $\mathfrak q'=\mathfrak q+\mathfrak g_{-\alpha_1}=\mathfrak q_{\alpha_2}$ for $\gamma_3,\gamma_5,\gamma_7$: the reduction is realized by $\pi_1$ in 
the first case and by $\pi_2$ in the second. By 
Section~\ref{subsec:projecting-orbits}, $\pi_1$ maps $\mathcal O(\gamma_2)$, $\mathcal O(\gamma_6)$, $\mathcal O(\gamma_4)$ onto the orbits $\mathcal O_5$, $\mathcal O_7$, $\mathcal O_9\subset F_{\alpha_1}$, which are totally real, $1$-nondegenerate and $2$-nondegenerate, respectively. Likewise $\pi_2$ maps $\mathcal O(\gamma_7)$, $\mathcal O(\gamma_3)$, $\mathcal O(\gamma_5)$ onto $\mathcal O_5$, $\mathcal O_7$, $\mathcal O_9\subset F_{\alpha_2}$, 
which are totally real, $2$-nondegenerate and $3$-nondegenerate, see 
Proposition~\ref{prop:Levi-kernel-dimensions}. In these six cases the 
fibers have real dimension $2$ and the $CR$ dimension drops by one. For 
the other projection the fibers have dimension $0$ or $1$, and 
$\mathfrak q'\neq\mathfrak q_{\alpha_j}$, so it does not realize the reduction.
\end{remark}
\section*{Acknowledgements}
The authors were partially supported by the group G.N.S.A.G.A. of I.N.d.A.M.\ . Moreover, the second author was supported by University of Parma through the action ``Bando di Ateneo 2025 per la ricerca''.

\end{document}